\documentclass{amsart}
\usepackage{amssymb, amsmath, mathrsfs, verbatim, url, bbm}

\theoremstyle{plain}
\newtheorem{theorem}{Theorem}
\newtheorem{lemma}[theorem]{Lemma}
\newtheorem{proposition}[theorem]{Proposition}
\newtheorem{corollary}[theorem]{Corollary}

\theoremstyle{definition}
\newtheorem{definition}[theorem]{Definition}
\newtheorem{question}{Question}

\newcommand{\diam}{\mathsf{diam}}

\newcommand{\cl}{\mathsf{cl}}
\newcommand{\dom}{\mathsf{dom}}

\newcommand{\supp}{\mathsf{supp}}
\newcommand{\bG}{\mathbf{\Gamma}}
\newcommand{\PSP}{\mathsf{PSP(\mathbf{\Gamma})}}
\newcommand{\kPSP}{\kappa\text{-}\mathsf{PSP(\mathbf{\Gamma})}}
\newcommand{\Pco}{\mathsf{PSP(coanalytic)}}
\newcommand{\Po}{\mathsf{PSP(open)}}
\newcommand{\Pc}{\mathsf{PSP(closed)}}
\newcommand{\Pcl}{\mathsf{PSP(clopen)}}
\newcommand{\Pf}{\mathsf{PSP(F_\sigma)}}
\newcommand{\Pg}{\mathsf{PSP(G_\delta)}}
\newcommand{\Pa}{\mathsf{PSP(analytic)}}
\newcommand{\WO}{\mathsf{WO}}
\newcommand{\CH}{\mathsf{CH}}
\newcommand{\MA}{\mathsf{MA}}
\newcommand{\VL}{\mathsf{V=L}}
\newcommand{\GP}{\mathsf{GP}}
\newcommand{\klGP}{\mathsf{(\kappa,\lambda)}\textrm{-}\mathsf{GP}}
\newcommand{\ZFC}{\mathsf{ZFC}}
\newcommand{\COF}{\mathsf{Cof}}

\newcommand{\Aa}{\mathcal{A}}

\newcommand{\CC}{\mathcal{C}}
\newcommand{\DD}{\mathcal{D}}
\newcommand{\TT}{\mathcal{T}}
\newcommand{\Ss}{\mathcal{S}}

\newcommand{\FF}{\mathcal{F}}

\newcommand{\II}{\mathcal{I}}
\newcommand{\JJ}{\mathcal{J}}

\newcommand{\UU}{\mathcal{U}}
\newcommand{\VV}{\mathcal{V}}
\newcommand{\PP}{\mathcal{P}}

\newcommand{\PPP}{\mathbb{P}}

\newcommand{\cccc}{\mathfrak{c}}

\newcommand{\bbbb}{\mathfrak{b}}

\begin{document}

\title[Distinguishing perfect set properties]{Distinguishing perfect set
properties in separable metrizable spaces}

\author{Andrea Medini}
\address{Kurt G\"odel Research Center for Mathematical Logic
\newline\indent University of Vienna, W\"ahringer Stra{\ss}e 25, A-1090 Wien,
Austria}
\email{andrea.medini@univie.ac.at}
\urladdr{http://www.logic.univie.ac.at/\~{}medinia2/}

\date{August 9, 2014}

\thanks{The author acknowledges the support of the FWF grant I 1209-N25.}

\begin{abstract}
All spaces are assumed to be separable and metrizable. Our main result is that
the statement ``For every space $X$, every closed subset of $X$ has the perfect
set property if and only if every analytic subset of $X$ has the perfect set
property'' is equivalent to $\bbbb>\omega_1$ (hence, in particular, it is independent of $\ZFC$).
This, together with a theorem of Solecki and an example of Miller, will allow us to
determine the status of the statement ``For every space $X$, if every $\bG$
subset of $X$ has the perfect
set property then every $\bG'$ subset of $X$ has the perfect set
property'' as $\bG,\bG'$ range over all pointclasses of complexity at most
analytic or coanalytic.

Along the way, we define and investigate a property of independent interest. We
will say that a subset $W$ of $2^\omega$ has the \emph{Grinzing property} if it
is uncountable and for every uncountable $Y\subseteq W$ there exists an
uncountable collection consisting of uncountable subsets of $Y$ with pairwise
disjoint closures in $2^\omega$. The following theorems hold.
\begin{enumerate}
\item There exists a subset of $2^\omega$ with the Grinzing property.
\item Assume $\MA+\neg\CH$. Then $2^\omega$ has the Grinzing property.
\item Assume $\CH$. Then $2^\omega$ does not have the Grinzing property.
\end{enumerate}
The first result was obtained by Miller using a theorem of Todor\v{c}evi\'c, and is needed in the proof of our main result.
\end{abstract}

\maketitle

All spaces are assumed to be separable and metrizable, but not necessarily Polish. See Section 1 for notation and terminology.
\begin{definition}
Let $X$ be a space and $\bG$ a pointclass. We will say that $X$ has the
\emph{perfect set property for $\bG$ subsets} (briefly, the $\PSP$)
if every $\bG$ subset of $X$ is either countable or it contains a copy of
$2^\omega$.
\end{definition}
\noindent Notice that the $\PSP$ gets stronger as $\bG$
gets bigger.

One of the classical problems of descriptive set theory consists in determining
for which pointclasses $\bG$ the statement ``Every Polish space has the
$\PSP$'' holds. The following three famous theorems essentially
solve this
problem (see \cite[Theorem 12.2(c)]{kanamori}, \cite[Theorem 13.12]{kanamori}, and \cite[Proposition 27.5]{kanamori} respectively).
\begin{theorem}[Suslin]\label{suslin}
Every Polish space has the $\Pa$.
\end{theorem}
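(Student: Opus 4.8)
The plan is to establish the dichotomy directly by showing that every \emph{uncountable} analytic subset $A$ of a Polish space $X$ contains a copy of $2^\omega$; together with the trivial countable case this gives the $\Pa$. First I would invoke the standard representation of analytic sets: since such an $A$ is in particular nonempty and analytic, there is a continuous surjection $f\colon\omega^\omega\to A$. The goal is then to construct a Cantor scheme, that is, a family $(t_s)_{s\in 2^{<\omega}}$ of finite sequences $t_s\in\omega^{<\omega}$ with $t_s\subsetneq t_{s0},t_{s1}$ such that, writing $N_t=\{x\in\omega^\omega : t\subseteq x\}$ for the basic clopen set determined by $t$, the images $f[N_{t_s}]$ have diameter tending to $0$ along each branch and satisfy $\overline{f[N_{t_{s0}}]}\cap\overline{f[N_{t_{s1}}]}=\emptyset$. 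Such a scheme will yield an embedding $g\colon 2^\omega\to X$ whose range lies in $A$.

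The engine of the construction is the following splitting lemma. Call $t\in\omega^{<\omega}$ \emph{big} if $f[N_t]$ is uncountable; then for every big $t$ and every $\varepsilon>0$ there exist big $u,v\supseteq t$ with $\diam(f[N_u]),\diam(f[N_v])<\varepsilon$ and $\overline{f[N_u]}\cap\overline{f[N_v]}=\emptyset$. To prove it, note that $f[N_t]$, being an uncountable subset of a second countable space, has at least two condensation points $x_0\neq x_1$, and choose open balls $B_0\ni x_0$, $B_1\ni x_1$ of radius $<\varepsilon/2$ with $\overline{B_0}\cap\overline{B_1}=\emptyset$. Since each $x_i$ is a condensation point, $f[N_t]\cap B_i$ is uncountable; writing the open set $N_t\cap f^{-1}[B_i]$ as a countable union of basic clopen sets $N_w$ with $w\supseteq t$, one of them must have $f[N_w]$ uncountable by the pigeonhole principle for uncountability. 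Taking such a $w$ for $i=0$ and $i=1$ produces $u$ and $v$; they have small diameter because $f[N_u]\subseteq B_0$ and $f[N_v]\subseteq B_1$, and they properly extend $t$ since $f[N_t]$ meets both disjoint balls.

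With the lemma in hand I would build the scheme by recursion on $|s|$, starting from the big node $t_\emptyset=\emptyset$ (big because $A$ is uncountable) and at each stage applying the lemma to $t_s$ with $\varepsilon=2^{-|s|}$ to obtain $t_{s0},t_{s1}$. For $\alpha\in 2^\omega$ the sequences $(t_{\alpha\restriction n})_n$ are nested with strictly increasing lengths, so $\hat\alpha=\bigcup_n t_{\alpha\restriction n}\in\omega^\omega$ is well defined, and I set $g(\alpha)=f(\hat\alpha)\in A$. Continuity of $g$ follows from the shrinking diameters, and injectivity from the disjoint-closure condition: if $\alpha,\beta$ first differ at coordinate $n$, then with $s=\alpha\restriction n$ their images lie in the disjoint closed sets $\overline{f[N_{t_{s0}}]}$ and $\overline{f[N_{t_{s1}}]}$. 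Thus $g$ is a continuous injection of the compact space $2^\omega$ into the Hausdorff space $X$, hence an embedding with range contained in $A$.

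The hard part will be the splitting lemma, and specifically the passage from ``$f[N_t]$ is uncountable'' to ``some basic piece inside a small ball has uncountable image.'' This is where the hypotheses genuinely enter: the existence of condensation points uses second countability of $X$, while the pigeonhole step uses the countability of the base so that the uncountable set $f[N_t]\cap B_i$ cannot be exhausted by countably many countable pieces. Everything downstream is a routine fusion argument, and completeness of $X$ (Polishness) is needed only to guarantee that the nested closed sets of vanishing diameter pin down the single point $f(\hat\alpha)$.
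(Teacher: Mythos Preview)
Your proof is correct and is essentially the classical argument for Suslin's theorem. Note, however, that the paper does \emph{not} supply its own proof of this statement: it is quoted as a classical result with a reference to Kanamori, so there is no ``paper's proof'' to compare against. What you have written is the standard Cantor-scheme construction and would be perfectly acceptable as a self-contained proof.

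One small remark on your concluding paragraph: in the proof as you have actually written it, completeness of $X$ is \emph{not} used to ``pin down the single point $f(\hat\alpha)$''. You define $g(\alpha)=f(\hat\alpha)$ directly as the image under $f$ of a specific element of $\omega^\omega$, so no appeal to shrinking closed sets is needed there. Where Polishness genuinely enters is earlier, in the representation step: the existence of a continuous surjection $f\colon\omega^\omega\to A$ for a nonempty analytic $A$ is a fact about analytic subsets of \emph{Polish} spaces. Everything after that works in any separable metric target. This does not affect the validity of your argument, only the commentary on which hypothesis does what.
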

\begin{theorem}[G\"odel]\label{godel}
Assume $\VL$. Then no uncountable Polish space has the $\Pco$.
\end{theorem}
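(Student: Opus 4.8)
The plan is to reduce the statement to the construction of a single counterexample living in $2^\omega$, and then to spread that counterexample across every uncountable Polish space. So first I would produce, under $\VL$, an uncountable coanalytic set $A\subseteq 2^\omega$ containing no nonempty perfect subset (in particular no copy of $2^\omega$); this already witnesses the failure of the $\Pco$ for $2^\omega$ itself. Granting such an $A$, let $X$ be an arbitrary uncountable Polish space. By the classical perfect set theorem $X$ contains a compact set $K$ homeomorphic to $2^\omega$; fix a homeomorphism $h\colon 2^\omega\to K$. Since coanalyticity is preserved by homeomorphisms between Polish spaces, $h[A]$ is coanalytic in $K$; and since $K$ is closed in $X$, a coanalytic subset of $K$ is the intersection of $K$ with a coanalytic subset of $X$, hence coanalytic in $X$. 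As $h$ carries perfect sets to perfect sets, $h[A]$ is an uncountable coanalytic subset of $X$ with no copy of $2^\omega$, so $X$ fails the $\Pco$.

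The heart of the matter is the construction of $A$, and this is where $\VL$ is used. Gödel's analysis of $L$ supplies a wellordering $<_L$ of $2^\omega$ of order type $\omega_1$ that is $\Delta^1_2$ and ``good,'' meaning that ``$y<_L x$'' is uniformly witnessed by a countable wellfounded model of a fragment of $\ZFC+\VL$ coded by a real. The naive idea is to let $A$ collect, level by level, the $<_L$-least real not yet enumerated: this set is uncountable and thin, but the minimality clause costs an existential real quantifier and only places $A$ in $\Sigma^1_2$. The real task is to lower the complexity to coanalytic. The device I would use is that wellfoundedness of a coded model is itself a $\Pi^1_1$ condition, so that a membership statement of the form ``$x$ appears in the constructible hierarchy already below the first ordinal not recursive in $x$'' can be cast in $\Pi^1_1$ form. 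This leads to the largest thin $\Pi^1_1$ set $C_1=\{x\in 2^\omega : x\in L_{\omega_1^x}\}$, where $\omega_1^x$ is the least ordinal not recursive in $x$; it is thin in $\ZFC$ alone, and under $\VL$ one verifies that it has size $\aleph_1$, because reals meeting the membership condition appear cofinally below $\omega_1$. Taking $A=C_1$ finishes the construction.

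The main obstacle is exactly this descent from $\Sigma^1_2$ to $\Pi^1_1$: producing an uncountable thin set of the wrong complexity is easy, and it is genuine fine-structural information about $L$ — the good wellordering together with the $\Pi^1_1$-ness of wellfoundedness — that forces the coanalytic bound. By comparison, the transfer argument of the first paragraph and the proof that $C_1$ is thin are routine, and the only remaining delicate point outside the construction is confirming that $C_1$ is uncountable (not merely nonempty) under $\VL$.
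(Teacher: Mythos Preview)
Your sketch is essentially correct and follows the classical route: reduce to $2^\omega$ via a perfect-set embedding, then exhibit under $\VL$ an uncountable thin $\Pi^1_1$ set, taking the Guaspari--Kechris--Sacks set $C_1=\{x:x\in L_{\omega_1^x}\}$ as the witness. The only point I would tighten is the verification that $C_1$ is uncountable under $\VL$: the cleanest way is to observe that for cofinally many $\alpha<\omega_1$ the structure $L_\alpha$ is admissible and projects to $\omega$, and the $<_L$-least real coding such an $L_\alpha$ lies in $C_1$; your phrase ``reals meeting the membership condition appear cofinally below $\omega_1$'' gestures at this but does not quite say why.

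As for comparison with the paper: there is nothing to compare. The paper does not prove this theorem; it is quoted as background and attributed to G\"odel with a reference to \cite[Theorem 13.12]{kanamori}. Your outline is in fact the standard argument one finds there (or, in the $C_1$ formulation, in the effective descriptive set theory literature), so you are not diverging from anything---you are simply supplying what the paper deliberately omits.
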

\begin{theorem}[Davis]
Assume the axiom of Projective Determinacy. Then every Polish space has the
$\mathsf{PSP(projective)}$.\footnote{Since the axiom of Projective Determinacy has a fairly high consistency strength, it might be worth noting that an inaccessible cardinal is enough to obtain the consistency of ``Every Polish space has the $\mathsf{PSP(projective)}$'' (this is due to Solovay, see \cite[Theorem 26.14(ii)]{jech}).}
\end{theorem}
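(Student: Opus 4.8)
The plan is to prove the \emph{perfect set game} characterization of the perfect set property and then invoke Projective Determinacy. First I would reduce to subsets of $2^\omega$. Since $X$ is Polish, a standard fact provides a closed set $C\subseteq\omega^\omega$ and a continuous bijection $f\colon C\to X$; composing with the canonical embedding $\phi\colon\omega^\omega\hookrightarrow 2^\omega$ (sending $a$ to $0^{a(0)}10^{a(1)}1\cdots$) turns an arbitrary projective $A\subseteq X$ into a projective set $B=\phi[f^{-1}[A]]\subseteq 2^\omega$. Both steps preserve the two alternatives we care about: countability is preserved because $f$ and $\phi$ are injective on the relevant sets, and ``contains a copy of $2^\omega$'' is preserved because a continuous bijection restricted to a compact copy of $2^\omega$ is a homeomorphism onto its image, while $\phi$ is an embedding. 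Thus it suffices to show that every projective $B\subseteq 2^\omega$ is either countable or contains a copy of $2^\omega$.

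Next I would introduce the perfect set game $G^*(B)$: at round $n$ Player I plays a finite binary string $u_n\in 2^{<\omega}$ and Player II plays a single bit $i_n\in\{0,1\}$, producing the real $x=u_0{}^\frown i_0{}^\frown u_1{}^\frown i_1{}^\frown\cdots\in 2^\omega$, and Player I wins iff $x\in B$. The space of runs is homeomorphic to $\omega^\omega$ and the map sending a run to its outcome $x$ is continuous, so the payoff set for Player I is a continuous preimage of $B$ and hence projective. By Projective Determinacy, $G^*(B)$ is determined, so it remains to prove two lemmas.

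Lemma A (Player I wins $\Rightarrow B$ contains a copy of $2^\omega$): given a winning strategy $\sigma$ for I, I would define $F\colon 2^\omega\to B$ by letting II play the bits of $z\in 2^\omega$ against $\sigma$ and taking $F(z)$ to be the outcome. If $z,z'$ first differ at coordinate $n$, then I's moves $u_0,\dots,u_n$ agree in both runs, so II's $n$-th bit occupies the same coordinate of the outcome, where $F(z)$ and $F(z')$ disagree; hence $F$ is a continuous injection, and being defined on the compact space $2^\omega$ it is an embedding with image a copy of $2^\omega$ contained in $B$.

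Lemma B (Player II wins $\Rightarrow B$ countable) is the delicate step, and I expect it to be the main obstacle. Given a winning strategy $\tau$ for II and $x\in B$, call a finite $\tau$-position $p$ (with Player I to move and with outcome $w_p\sqsubset x$) a \emph{barrier for $x$} if II refutes every advance toward $x$, i.e.\ $\tau(p{}^\frown u)\ne x(|w_p|+|u|)$ for every I-move $u$ with $w_p{}^\frown u\sqsubset x$. A barrier must exist: otherwise from every such position I could extend the outcome strictly further along $x$, producing a run with outcome $x\in B$ and so defeating $\tau$. The key point is that a barrier $p$ determines $x$ completely: taking $u=\emptyset$ forces $x(|w_p|)=1-\tau(p)$, and then inductively, feeding back as I's move the bits of $x$ already recovered, each subsequent value $x(|w_p|+k)=1-\tau(p{}^\frown u)$ is forced by $\tau$. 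This uses bit-valuedness crucially---``different from $\tau(p{}^\frown u)$'' pins the value down exactly, whereas over $\omega$ it would not. Hence $x\mapsto p$ is an injection of $B$ into the countable set of finite $\tau$-positions, so $B$ is countable. Combining the two lemmas with determinacy finishes the proof.
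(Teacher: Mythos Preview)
Your argument is correct: it is the classical Davis proof via the perfect set (``$\ast$-'') game, and all the pieces---the reduction to $2^\omega$, the embedding induced by a winning strategy for I, and the barrier argument showing that a winning strategy for II pins down each $x\in B$ from a countable set of positions---are handled properly. The paper, however, does not prove this theorem at all; it simply records it (with a citation to Kanamori) as one of the three classical background results motivating the question. So there is no paper proof to compare against: you have supplied the standard argument that the paper omits.
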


But what happens in arbitrary (that is, not necessarily Polish) spaces? By the following simple proposition, the problem described above becomes trivial.
\begin{proposition}
Let $X$ be a Bernstein set in some uncountable Polish space. Then $X$ does not have the $\PSP$ for any poinclass $\bG$.
\end{proposition}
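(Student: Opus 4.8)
The plan is to produce, for an arbitrary pointclass $\bG$, a single uncountable $\bG$ subset of $X$ that contains no copy of $2^\omega$; this is exactly a failure of the $\PSP$. The natural candidate is $X$ itself. Every space is clopen in itself, so $X$ belongs to $\bG$ for every pointclass under consideration here: open, closed, clopen, $F_\sigma$, $G_\delta$, analytic, and coanalytic all contain the whole space. Moreover $X$ is uncountable: writing $Z$ for the ambient uncountable Polish space, $Z$ contains a Cantor set $C\cong 2^\omega\times 2^\omega$, whose fibers give $\cccc$-many pairwise disjoint perfect subsets of $Z$, each of which must meet $X$ by the Bernstein property; hence $|X|=\cccc$. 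So it only remains to check that $X$ contains no copy of $2^\omega$.

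For this I would first recall the defining feature of a Bernstein set: since both $X$ and $Z\setminus X$ meet every perfect subset of $Z$, neither of them can \emph{contain} a perfect subset of $Z$, as a perfect set inside $X$ would be missed entirely by $Z\setminus X$. I would then argue that any copy $K$ of $2^\omega$ sitting inside $X$ is automatically a perfect subset of $Z$. Indeed, $2^\omega$ is compact, so $K$ is compact in its intrinsic topology; since $X$ carries the subspace topology inherited from $Z$, the subspace topologies on $K$ induced from $X$ and from $Z$ coincide, so $K$ is a compact, hence closed, subset of $Z$. Being homeomorphic to $2^\omega$, it has no isolated points, so $K$ is a nonempty perfect subset of $Z$ contained in $X$. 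This contradicts the Bernstein property, so no such $K$ can exist.

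Combining the two points, $X$ is an uncountable $\bG$ subset of itself that contains no copy of $2^\omega$, witnessing that $X$ fails the $\PSP$; since $\bG$ was arbitrary, this holds for every pointclass. The only slightly delicate step is the transfer of compactness, and hence of perfectness, from $K$ regarded as a subspace of $X$ to $K$ regarded as a subspace of $Z$; the argument rests entirely on the elementary facts that compactness is an intrinsic topological property and that the two subspace topologies on $K$ agree. It is also worth stating explicitly that a ``copy of $2^\omega$'' means a subspace homeomorphic to the Cantor set, so that the absence of isolated points, and therefore perfectness, comes for free.
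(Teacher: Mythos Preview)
Your proof is correct and follows exactly the paper's approach: the paper's proof is the one-liner ``$X$ itself is an uncountable $\bG$ subset of $X$ that does not contain any copy of $2^\omega$,'' and you have simply unpacked the two implicit claims (uncountability via $2^\omega\approx 2^\omega\times 2^\omega$, and absence of Cantor subsets via the Bernstein property and intrinsicness of compactness) that the paper leaves to the reader. One minor remark: since the paper defines Bernstein sets in terms of copies of $2^\omega$ rather than perfect sets, your detour through ``$K$ is perfect in $Z$'' is unnecessary---once you observe that $K$ is a copy of $2^\omega$ in $Z$, the Bernstein property directly gives $(Z\setminus X)\cap K\neq\varnothing$.
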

\begin{proof}
Let $\bG$ be a pointclass. Then $X$ itself is an uncountable $\bG$ subset of $X$ that does not contain any copy of $2^\omega$.
\end{proof}

Much less trivial, however, is to determine exactly for which pointclasses $\bG,\bG'$ the $\PSP$ implies the $\mathsf{PSP(\bG')}$. More precisely, we will investigate the status of the statement ``For every space $X$, if $X$ has the $\PSP$ then $X$ has the $\mathsf{PSP(\bG')}$'' as $\bG,\bG'$ range over all pointclasses of complexity at most analytic or coanalytic. The reason for this limitation on the complexity lies in Theorem \ref{suslin} and Theorem \ref{godel}, but the investigation could be carried further (see Section 6).

While Theorem \ref{completepicture} gives a complete solution to this problem, we will focus first on a special case, which turned out to be the most interesting one. More specifically, we will consider the statement ``For every space $X$, if $X$ has the $\Pc$ then $X$ has the $\Pa$''. As we will show in Section 3, this statement is in fact equivalent to $\bbbb>\omega_1$ (see Theorem \ref{main}). We will need an auxiliary property, which is defined and investigated in Section 2. In Section 4, we will complement Theorem \ref{main} by establishing all results relevant to our problem that can be proved in $\ZFC$ alone. Finally, in Section 5 and Section 6, we will investigate natural generalizations of the properties that we considered and state several open questions.

To the reader interested in more instances of ``dropping Polishness'', we recommend the articles \cite{millerl} and \cite{millerp} of Miller, which partly inspired our work.

\section{Terminology and notation}

We will generally follow \cite{kechris}. However, since the spaces that we will
deal with are not necessarily Polish, we prefer to recall the relevant notions
and a few basic facts about them. Throughout this article, $\bG$ will always
denote one of the following (boldface) pointclasses.
\begin{itemize}
\item $\mathbf{\Sigma}^0_\xi$ or $\mathbf{\Pi}^0_\xi$, where $\xi$ is an ordinal
such that $1\leq\xi <\omega_1$ (these are the \emph{Borel pointclasses}).
\item $\mathbf{\Sigma}^1_n$ or $\mathbf{\Pi}^1_n$, where $n$ is an ordinal such
that $1\leq n <\omega$ (these are the \emph{projective pointclasses}).
\end{itemize}
We will assume that the definition of a $\bG$ subset of a Polish space is
well-known, and recall that it can be generalized to arbitrary spaces as
follows. In the case of the Borel pointclasses, this is not really necessary,
because the usual definition works in arbitrary spaces (see for example \cite[Section
11.B]{kechris}). However, we prefer to give a unified treatment.
\begin{definition}
Fix a pointclass $\bG$. Let $X$ be a space. We will say that $A\subseteq X$ is a
\emph{$\bG$ subset of $X$} if there exists a Polish space $T$ containing $X$ as
a subspace such that $A=B\cap X$ for some $\bG$ subset $B$ of $T$.
\end{definition}
\noindent It is easy to check that, when $\bG$ is a Borel pointclass, the above
definition coincides with the usual one. We will denote $\mathbf{\Sigma}^0_1$ by
$\mathsf{open}$, $\mathbf{\Pi}^0_1$ by $\mathsf{closed}$, $\mathbf{\Sigma}^0_2$
by $\mathsf{F_\sigma}$, $\mathbf{\Pi}^0_2$ by $\mathsf{G_\delta}$,
$\mathbf{\Sigma}^1_1$ by $\mathsf{analytic}$, $\mathbf{\Pi}^1_1$ by
$\mathsf{coanalytic}$. We will also denote $\bigcup_{1\leq\xi<
\omega_1}\mathbf{\Sigma}^0_\xi$ by $\mathsf{Borel}$ and $\bigcup_{1\leq n<
\omega}\mathbf{\Sigma}^1_n$ by $\mathsf{projective}$.

The following ``reassuring'' proposition can be proved by induction on $\bG$.
Since every space is homeomorphic to a subspace of $[0,1]^\omega$, one can
assume without loss of generality that $T$ is Polish. This makes it possible to
use Lavrentiev's Theorem (see \cite[Theorem 3.9]{kechris}) in the inductive
step from $\mathbf{\Pi}^1_n$ to $\mathbf{\Sigma}^1_{n+1}$, which is the only
non-trivial part of the proof.
\begin{proposition}
Fix a pointclass $\bG$. Let $X$ be a space and $A$ a subset of $X$. The
following conditions are equivalent.
\begin{itemize}
\item\label{some} $A$ is a $\bG$ subset of $X$.
\item\label{every} For every space $T$ containing $X$ as a subspace there exists
a $\bG$ subset $B$ of $T$ such that $A=B\cap X$.
\end{itemize}
\end{proposition}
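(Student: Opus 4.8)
The plan is to prove the nontrivial implication, that the first (``some'') condition implies the second (``every'') one, since the converse is immediate: applying the second condition to any fixed Polish space containing $X$ (for instance the Hilbert cube $[0,1]^\omega$, into which every space embeds) already yields a witness for the first. So assume $A=B_0\cap X$ for some $\bG$ subset $B_0$ of a Polish space $T_0\supseteq X$, and let $T$ be an arbitrary space containing $X$. First I would reduce to the case that $T$ is Polish: embedding $T$ in the Hilbert cube $Z$ gives $X\subseteq T\subseteq Z$ with $Z$ Polish, and if I can find a $\bG$ subset $\widehat{B}$ of $Z$ with $\widehat{B}\cap X=A$, then $\widehat{B}\cap T$ is a $\bG$ subset of $T$ meeting $X$ in $A$. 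Thus it suffices to show: for any two Polish spaces $T_0,P\supseteq X$, if $A=B_0\cap X$ for some $\bG$ subset $B_0$ of $T_0$, then $A=B\cap X$ for some $\bG$ subset $B$ of $P$. I would prove this by induction on $\bG$ along the hierarchy $\mathbf{\Sigma}^0_1,\mathbf{\Pi}^0_1,\ldots,\mathbf{\Sigma}^1_1,\mathbf{\Pi}^1_1,\mathbf{\Sigma}^1_2,\ldots$, quantifying over all spaces $X$ and all subsets $A$ at once, so that the inductive hypothesis may be applied to auxiliary spaces such as $X\times\omega^\omega$.

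For the base cases $\bG=\mathbf{\Sigma}^0_1$ and $\bG=\mathbf{\Pi}^0_1$ there is nothing to do: ``open'' and ``closed'' are intrinsic to the subspace topology of $X$, which does not depend on the ambient space, so a set that is open (resp. closed) in $X$ is witnessed over $P$ as well. For the Borel successor classes I would use closure under the relevant countable Boolean operations: writing $B_0$ as a countable union (for $\mathbf{\Sigma}^0_\xi$) or intersection (for $\mathbf{\Pi}^0_\xi$) of sets of strictly lower rank, intersecting each with $X$, transferring the pieces to $P$ by the inductive hypothesis, and recombining, stays inside $\bG$ because $\mathbf{\Sigma}^0_\xi$ is closed under countable unions and $\mathbf{\Pi}^0_\xi$ under countable intersections. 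The passage from $\mathbf{\Sigma}^1_n$ to $\mathbf{\Pi}^1_n$ is equally routine, by complementation relative to $X$: if $X\setminus A$ is transferred to a $\mathbf{\Sigma}^1_n$ subset $C$ of $P$, then $P\setminus C$ is a $\mathbf{\Pi}^1_n$ subset of $P$ meeting $X$ in $A$.

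The one genuinely nontrivial step is the passage from $\mathbf{\Pi}^1_n$ to $\mathbf{\Sigma}^1_{n+1}$ (the analytic case being the same with a closed base set in place of the $\mathbf{\Pi}^1_n$ one), and this is where I expect the main obstacle to lie, because projections, unlike Boolean operations, are taken relative to the ambient space and so do not transparently commute with the change of ambient from $T_0$ to $P$. Here I would bring in Lavrentiev's Theorem to reconcile the two ambient copies of $X$: extend the identity on $X$ to a homeomorphism $h\colon G\to H$ between $G_\delta$ sets $G\subseteq T_0$ and $H\subseteq P$, each containing $X$, with $h$ the identity on $X$. Writing the $\mathbf{\Sigma}^1_{n+1}$ set $B_0$ as the projection of a $\mathbf{\Pi}^1_n$ subset $P_0$ of $T_0\times\omega^\omega$, I would restrict $P_0$ to $G\times\omega^\omega$, push it forward by $h\times\mathrm{id}$ to a $\mathbf{\Pi}^1_n$ subset of $H\times\omega^\omega$, and then absorb the $G_\delta$ frame: since $H\times\omega^\omega$ is Borel in $P\times\omega^\omega$, a $\mathbf{\Pi}^1_n$ subset of it is $\mathbf{\Pi}^1_n$ in $P\times\omega^\omega$ (this absorption is itself a short induction whose only nontrivial step is, once more, a projection), and projecting to $P$ produces a $\mathbf{\Sigma}^1_{n+1}$ subset $B$.

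Finally I would verify $B\cap X=A$. Because $X\subseteq G\cap H$ and $h$ fixes $X$ pointwise, for $x\in X$ one has $x\in B$ iff there is $y$ with $(h^{-1}(x),y)=(x,y)\in P_0$, that is iff $x\in\pi_{T_0}[P_0]=B_0$, i.e. iff $x\in A$; this closes the induction and hence the proof. The delicate points to watch are that the homeomorphism produced by Lavrentiev genuinely extends the identity (so that the bookkeeping of $B\cap X=A$ goes through) and that the $G_\delta$-absorption of $\mathbf{\Pi}^1_n$ sets is invoked rather than the false low-level Borel analogue, which is exactly why the Borel classes are handled by decomposition instead of by pushing forward along $h$.
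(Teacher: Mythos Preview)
Your proposal is correct and follows exactly the approach the paper sketches: reduce the ambient space $T$ to a Polish one via the Hilbert cube, proceed by induction on $\bG$, and invoke Lavrentiev's Theorem for the step from $\mathbf{\Pi}^1_n$ to $\mathbf{\Sigma}^1_{n+1}$, which you (like the paper) correctly identify as the only nontrivial point. You have simply filled in the details of what the paper left as a one-paragraph outline.
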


One could also define the so-called \emph{ambiguous pointclasses} as follows.
\begin{itemize}
\item Let $\mathbf{\Delta}^0_\xi=\mathbf{\Sigma}^0_\xi\cap\mathbf{\Pi}^0_\xi$
for every ordinal $\xi$ such that $1\leq\xi <\omega_1$.
\item Let $\mathbf{\Delta}^1_n=\mathbf{\Sigma}^1_n\cap\mathbf{\Pi}^1_n$ for
every ordinal $n$ such that $1\leq n <\omega$.
\end{itemize}
However, it is not hard to realize that they
would not add any interesting content to our results, while making the
exposition more cumbersome. Therefore, we will exclude them from our discussion.
The only exception is $\mathbf{\Delta}^0_1$, which we will denote by
$\mathsf{clopen}$ (see Proposition \ref{twodimbernstein} and Theorem
\ref{completepicture}).

Our reference for topology will be \cite{vanmillt}. By $X\approx Y$ we will mean that the spaces $X$ and $Y$ are homeomorphic. In this case, we will say that $Y$
is a \emph{copy} of $X$. Given an infinite cardinal $\kappa$, we will say that a
space $X$ is \emph{$\kappa$-crowded} if it is non-empty and every non-empty
open subset of $X$ has size at least $\kappa$. Recall that a sequence
$\langle A_n:n\in\omega\rangle$ of subsets of a space $X$ \emph{converges}
to a point $x\in X$ if for every neighborhood $U$ of $x$ there
exists $m\in\omega$ such that $A_n\subseteq U$ whenever $n\geq m$.

Recall that a subset $B$ of an uncountable Polish space
$T$ is a \emph{Bernstein set} if $B\cap K\neq\varnothing$ and $(T\setminus
B)\cap K\neq\varnothing$ for every copy $K$ of $2^\omega$ in $T$. It is easy to
see that Bernstein sets exist in $\ZFC$. Since $2^\omega\approx 2^\omega\times
2^\omega$, every Bernstein set has size $\cccc$.

Recall that a subset $L$ of a Polish space $T$ is a \emph{Luzin
set} if it is uncountable and every uncountable subset of $L$ is non-meager in $T$. It is easy to see that Luzin sets exist under the assumption $\mathsf{cof(BP)}=\omega_1$.

Our reference for pure set theory will be \cite{jech}. Given $f,g\in\omega^\omega$, we will write $f<^\ast g$ if there exists $m\in\omega$ such that $f(n)<g(n)$ for every $n\geq m$. Recall that a family $B\subseteq \omega^\omega$ is \emph{unbounded} if there exists no $g\in\omega^\omega$ such that $f<^\ast g$ for every $f\in B$. We will denote by $\bbbb$ the least size of an unbounded family in $\omega^\omega$.

\section{The Grinzing property}

In this section we investigate the property defined as follows.
The only result
from this section that will be used in the remainder of the article (namely, in
the proof of Proposition \ref{bsmall}) is Corollary \ref{existsgrinzing}.
However, we believe that Proposition \ref{gpch} and Theorem \ref{gpma} are of
independent interest. See also Section 5.

\begin{definition}
We will say that a subset $W$ of $2^\omega$ has
the \emph{Grinzing property}\footnote{This property is named after a lovely area of Vienna, in which the author lived while this article was being written.}
(briefly, the $\GP$)
if it is uncountable and for every uncountable $Y\subseteq W$ there exist
uncountable subsets $Y_\alpha$ of $Y$ for $\alpha\in\omega_1$
such that $\cl(Y_\alpha)\cap \cl(Y_\beta)=\varnothing$ whenever
$\alpha\neq\beta$, where the closure is taken in $2^\omega$.
\end{definition}
\noindent Notice that an uncountable subset $W$ of $2^\omega$ has the $\GP$ if and only if every subset of $W$ of size $\omega_1$ has the $\GP$.

An early version of this article contained an erroneous proof of Corollary \ref{existsgrinzing}. Miller spotted the mistake and supplied a new proof, based on Theorem \ref{todorcevic}. We will need the following definition. Given a subtree $\TT$ of $\omega^{<\omega_1}$ and  $\alpha<\omega_1$, let $\TT_\alpha=\TT\cap\omega^\alpha$ denote the $\alpha$-th level of $\TT$. Also let $\TT_{<\gamma}=\bigcup_{\alpha<\gamma}\TT_\alpha$ for every limit ordinal $\gamma<\omega_1$.

\begin{definition}[Todor\v{c}evi\'c]\label{aronszajn}
We will say that $\langle K_s:s\in\TT\rangle$ is an \emph{Aronszajn tree of perfect sets} if the following conditions are satisfied.
\begin{enumerate}
\item $\TT$ is a subtree of $\omega^{<\omega_1}$.
\item $\TT_\alpha$ is non-empty and countable for every $\alpha\in\omega_1$.
\item $K_s$ is a perfect subset of $2^\omega$ for every $s\in\TT$.
\item $K_t\subsetneq K_s$ whenever $t\supsetneq s$.
\item $K_s\cap K_t=\varnothing$ whenever $t\perp s$.
\end{enumerate}
\end{definition}
\noindent Using the fact that there are no strictly decreasing $\omega_1$-sequences of closed subsets of $2^\omega$, it is easy to check that if $\langle K_s:s\in\TT\rangle$ is an Aronszajn tree of perfect sets then $\TT$ is in fact an Aronszajn tree in the usual sense.

The proof of following result is taken from the first part of the proof of \cite[Theorem 4]{galvinmiller}, and it is included for the sake of completeness. We will employ the well-known technique of fusion, for which we refer to \cite[pages 244-245]{jech}. When using the ordering $\leq_n$, we will identify a perfect subset $K$ of $2^\omega$ with the corresponding subtree $p=\{x\upharpoonright n:x\in K, n\in\omega\}$ of $2^{<\omega}$.

\begin{theorem}[Todor\v{c}evi\'c]\label{todorcevic}
There exists an Aronszajn tree of perfect sets.
\end{theorem}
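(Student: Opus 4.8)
The plan is to build the Aronszajn tree of perfect sets $\langle K_s : s\in\TT\rangle$ by recursion on the countable levels $\alpha<\omega_1$, maintaining the tree $\TT\subseteq\omega^{<\omega_1}$ simultaneously. At each successor step I must split every perfect set in the current level into infinitely many pairwise disjoint perfect subsets, and at each limit step I must ensure that the intersections along the branches below a limit node remain perfect (in particular, nonempty). The conditions (1)--(5) of Definition \ref{aronszajn} translate directly into the bookkeeping: (4) and (5) are the inclusion and disjointness requirements that I enforce at each stage, while (2) (each level countable and nonempty) is what ultimately makes $\TT$ an Aronszajn tree.

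First I would handle the successor step, which is elementary. Given a perfect $K_s\subseteq 2^\omega$, I split it into countably many pairwise disjoint perfect subsets $\{K_{s^\frown n} : n\in\omega\}$ with each $K_{s^\frown n}\subsetneq K_s$; this is routine, since any perfect set contains a homeomorphic copy of $2^\omega$, which decomposes into continuum-many disjoint perfect pieces along disjoint basic clopen sets. Identifying each perfect set with its subtree of $2^{<\omega}$ as the excerpt suggests, I would pick incompatible nodes in the tree of $K_s$ and take the perfect sets determined by the subtrees above them. This immediately gives conditions (3), (4), and (5) at the successor level, and keeps levels countable.

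The hard part will be the limit step, and this is exactly where the fusion technique enters. At a limit ordinal $\gamma$, a node $t\in\omega^\gamma$ is the supremum of a chain $\langle s_\alpha : \alpha<\gamma\rangle$ of nodes already placed in $\TT_{<\gamma}$, and I must define $K_t$ to be a perfect subset of $\bigcap_{\alpha<\gamma} K_{s_\alpha}$. Because $\gamma$ is a countable limit, I can fix a cofinal increasing sequence $\alpha_0<\alpha_1<\cdots\to\gamma$ and arrange, already during the construction, that the perfect sets $K_{s_{\alpha_n}}$ are chosen according to a fusion sequence: I require $K_{s_{\alpha_{n+1}}}\leq_n K_{s_{\alpha_n}}$ in the fusion ordering (in the sense of \cite[pages 244--245]{jech}), so that the intersection $\bigcap_n K_{s_{\alpha_n}}$ is the fusion and hence perfect. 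This is the crux: the successor splittings cannot be done carelessly, but must be set up so that \emph{every} cofinal branch through the tree admits such a fusion, guaranteeing $K_t\neq\varnothing$ and perfect at every limit node. Condition (4) at the limit, namely $K_t\subsetneq K_{s_\alpha}$ for each $\alpha<\gamma$, follows since $K_t\subseteq\bigcap_\alpha K_{s_\alpha}$ and each inclusion was already strict cofinally.

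Finally I would verify that the resulting structure is an Aronszajn tree of perfect sets: conditions (1)--(3) are immediate from the construction, (4) and (5) are maintained at every step as above, and each level $\TT_\alpha$ is countable because at each node only countably many immediate successors are created and there are only countably many nodes below any level $\alpha<\omega_1$. The remark following Definition \ref{aronszajn} then ensures that $\TT$ has no uncountable branch: a branch would yield a strictly decreasing $\omega_1$-sequence of the perfect (hence closed) sets $K_s$, which is impossible in $2^\omega$. Thus $\TT$ is genuinely Aronszajn and the theorem follows.
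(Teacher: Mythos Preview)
Your outline has the right shape, and you correctly identify fusion as the key to the limit step. However, there is a genuine gap in how you handle limits. You write that at a limit $\gamma$ you will ``arrange, already during the construction, that the perfect sets $K_{s_{\alpha_n}}$ are chosen according to a fusion sequence'' and that ``the successor splittings \ldots\ must be set up so that \emph{every} cofinal branch through the tree admits such a fusion.'' Two problems arise. First, the cofinal sequence $\langle\alpha_n:n\in\omega\rangle$ is chosen only at stage $\gamma$, after $\TT_{<\gamma}$ has been built; different limit ordinals require different cofinal sequences, and there is no single function $n\mapsto f(\alpha)$ on levels that makes \emph{every} branch a fusion sequence with respect to \emph{every} such cofinal sequence simultaneously. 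Second, even if every branch through $\TT_{<\gamma}$ did yield a perfect intersection, you never say which branches you actually place in $\TT_\gamma$; there may be continuum many, and extending all of them would violate the countability of levels. Your phrasing (``a node $t\in\omega^\gamma$ is the supremum of a chain \ldots'') treats $t$ as given, but at the limit stage the task is precisely to \emph{choose} which $t$'s enter $\TT_\gamma$.

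The paper fixes both issues by carrying an explicit auxiliary hypothesis through the recursion: for all $\beta<\alpha$, every $n\in\omega$, and every $s\in\TT_\beta$, there exists $t\in\TT_\alpha$ with $t\supseteq s$ and $K_t\leq_n K_s$. This density condition (rather than a requirement on all branches) is what makes the limit step work. At limit $\gamma$ one fixes a single cofinal sequence $\langle\alpha_\ell:\ell\in\omega\rangle$ and, for each $s\in\bigcup_\ell\TT_{\alpha_\ell}$, uses the hypothesis repeatedly to \emph{construct} a chain $s=t_n(s)\subseteq t_{n+1}(s)\subseteq\cdots$ with $K_{t_{\ell+1}(s)}\leq_{\ell+1}K_{t_\ell(s)}$; the union $t(s)$ goes into $\TT_\gamma$ and $K_{t(s)}=\bigcap_\ell K_{t_\ell(s)}$ is perfect by fusion. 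Only countably many such $t(s)$ are produced, so $\TT_\gamma$ stays countable, and the hypothesis is easily seen to persist to level $\gamma$. This inductive invariant is the missing idea in your sketch.
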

\begin{proof}
We will construct $\TT_\alpha$ and $K_s$ for $s\in\TT_\alpha$ simultaneously by transfinite recursion, making sure that conditions (1)-(5) in Definition \ref{aronszajn} are satisfied. Furthermore, we will make sure that the following additional condition is satisfied.
\begin{enumerate}
\item[(6)] Whenever $\beta < \alpha$ and $n\in\omega$, given any $s\in\TT_\beta$, there exists $t\in\TT_\alpha$ such that $t\supseteq s$ and $K_t\leq_n K_s$.
\end{enumerate}
\noindent Start by setting $\TT_0=\{\varnothing\}$ and $K_\varnothing=2^\omega$. The construction at a successor stage is straightforward, so we leave it to the reader. Now assume that $\gamma<\omega_1$ is a limit ordinal, and that $\TT_{<\gamma}$ and $K_s$ for $s\in\TT_{<\gamma}$ have already been constructed.

Let $\langle \alpha_\ell:\ell\in\omega\rangle$ be a strictly increasing sequence consisting of elements of $\gamma$ such that $\alpha_\ell\to\gamma$. Fix $s\in\TT_{\alpha_n}$. We will inductively define a chain $\langle t_\ell(s):\ell\geq n\rangle$ such that $t_\ell(s)\in\TT_{\alpha_\ell}$ for every $\ell\geq n$, using the inductive hypothesis (6). Start by letting $t_n(s)=s$. Given $t_\ell(s)$ for some $\ell\geq n$, choose $t_{\ell+1}(s)\in\TT_{\alpha_{\ell+1}}$ such that $t_{\ell+1}(s)\supseteq t_\ell(s)$ and $K_{t_{\ell+1}(s)}\leq_{\ell+1} K_{t_\ell(s)}$. Now let $t(s)=\bigcup_{\ell\geq n} t_\ell(s)$.

Define $\TT_\gamma=\{t(s):s\in\bigcup_{\ell\in\omega}\TT_{\alpha_\ell}\}$. For each $t\in\TT_\gamma$, fix $s\in\TT_{\alpha_n}$ such that $t=t(s)$, then define $K_t=\bigcap_{\ell\geq n}K_{t_\ell(s)}$. Notice that $K_t$ is a perfect subset of $2^\omega$ by fusion. Furthermore, it is clear that this definition does not depend on the choice of $s$. Finally, it is easy to check that conditions (1)-(6) will be mantained.
\end{proof}

\begin{corollary}[Miller]\label{existsgrinzing}
There exists a subset of $2^\omega$ with the $\GP$.
\end{corollary}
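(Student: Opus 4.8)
The plan is to build $W$ as a suitable selector from an Aronszajn tree of perfect sets. Fix such a tree $\langle K_s:s\in\TT\rangle$ as given by Theorem \ref{todorcevic}; crucially, I will want $\TT$ to be a \emph{special} Aronszajn tree (that is, a union of countably many antichains), and I will return to this point below. By transfinite recursion along a well-ordering of $\TT$ in order type $\omega_1$, choose pairwise distinct points $x_s\in K_s$: at each stage only countably many points have been chosen, while $K_s$ is uncountable, so the choice is possible. Put $W=\{x_s:s\in\TT\}$. Since $s\mapsto x_s$ is injective and $\TT$ is uncountable, $W$ is an uncountable subset of $2^\omega$, and it remains to verify the $\GP$.

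So fix an uncountable $Y\subseteq W$, and write $Y=\{x_s:s\in S\}$ for the corresponding uncountable $S\subseteq\TT$. Call a node $t\in\TT$ \emph{big} if $\{s\in S:s\supseteq t\}$ is uncountable, and let $B$ be the set of big nodes. Then $B$ is non-empty (the root is big) and downward closed. The key preliminary claim is that $B$ is uncountable. Indeed, suppose not; then the lengths of the nodes in $B$ are bounded by some $\delta<\omega_1$, so no node of $\TT_\delta$ is big. Every $s\in S$ of length at least $\delta$ extends $s\upharpoonright\delta\in\TT_\delta$, and for each of the countably many $u\in\TT_\delta$ the set $\{s\in S:s\supseteq u\}$ is countable; hence there are only countably many $s\in S$ of length at least $\delta$. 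As there are also only countably many of smaller length, this contradicts the uncountability of $S$.

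Since $B$ is an uncountable, downward-closed subtree of $\TT$ and $\TT$ is special, $B$ is special as well (intersect a decomposition of $\TT$ into countably many antichains with $B$); being uncountable with countable levels, $B$ therefore contains an uncountable antichain $\{s_\alpha:\alpha<\omega_1\}$, and each $s_\alpha$ is big. For each $\alpha$ set $Y_\alpha=\{x_t:t\in S\text{ and }t\supseteq s_\alpha\}$. Then $Y_\alpha\subseteq Y$ is uncountable because $s_\alpha$ is big, and $Y_\alpha\subseteq K_{s_\alpha}$ because $t\supseteq s_\alpha$ implies $x_t\in K_t\subseteq K_{s_\alpha}$. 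Finally, for $\alpha\neq\beta$ the nodes $s_\alpha,s_\beta$ are incomparable, so $K_{s_\alpha}\cap K_{s_\beta}=\varnothing$; as these are closed sets, $\cl(Y_\alpha)\cap\cl(Y_\beta)\subseteq K_{s_\alpha}\cap K_{s_\beta}=\varnothing$. This exhibits the required family and completes the verification that $W$ has the $\GP$.

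The main obstacle is the appeal to specialness, and it is genuinely needed: the abstract Aronszajn property of $\TT$ does not suffice, since if $\TT$ were a Suslin tree then $B$ could fail to contain any uncountable antichain, and a greedy attempt to peel off uncountably many pairwise incomparable big cones provably gets stuck after countably many steps. Thus the remaining task is to ensure that the tree furnished by Theorem \ref{todorcevic} can be taken to be special. I expect this to follow from the construction itself, which extends every node cofinally to each limit level along a fixed sequence -- exactly the canonical pattern producing a special Aronszajn tree -- so that one verifies directly that the resulting $\TT$ admits a strictly increasing map into $\QQQ$ (equivalently, is a countable union of antichains). Alternatively, one may carry out the fusion construction of Theorem \ref{todorcevic} over a prescribed special Aronszajn tree, which exists in $\ZFC$.
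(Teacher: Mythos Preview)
Your construction of $W$ and the reduction to finding an uncountable antichain in the downward-closed tree $B$ of big nodes is exactly the paper's setup (the paper calls the normalized subtree $\Ss$). The gap you flag---needing $\TT$, hence $B$, to be special---is real in your argument as written, and your proposed fixes (verifying that the fusion construction yields a special tree, or redoing it over a prescribed special tree) are plausible but not carried out.

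The paper, however, sidesteps specialness entirely, and the mechanism is worth knowing. The key observation is that $\langle K_s:s\in B\rangle$ is itself an Aronszajn tree of perfect sets, and \emph{no such tree can be ccc}. Indeed, if $B$ were ccc, then forcing with $B$ would preserve $\omega_1$ while adding a cofinal branch; the perfect sets $K_s$ along this branch would then form, in the extension, a strictly decreasing $\omega_1$-sequence of closed subsets of $2^\omega$, which is impossible (the associated subtrees of the countable tree $2^{<\omega}$ can strictly decrease only countably many times). Thus the perfect-set labelling already rules out Suslinness of $B$, and your worry that ``if $\TT$ were a Suslin tree then $B$ could fail to contain any uncountable antichain'' cannot materialize: carrying an Aronszajn tree of perfect sets is strictly more than the abstract Aronszajn property, and that extra structure is precisely what the paper exploits. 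This forcing argument is both shorter and more robust than arranging specialness by hand.
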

\begin{proof}
Let $\langle K_s:s\in\TT\rangle$ be an Aronszajn tree of perfect sets.
Pick $w_s\in K_s$ for every $s\in\TT$ so that $w_s\neq w_t$ whenever $s\neq t$. Let $W=\{w_s:s\in\TT\}$. We will show that $W$ has the $\GP$. Fix an uncountable $Y\subseteq W$ and let $\Ss$ be the subtree of $\TT$ generated by $\{s\in\TT:w_s\in Y\}$. Assume without loss of generality that $\Ss$ is normal (that is, $\{t\in\Ss: t\supseteq s\}$ is uncountable for every $s\in \Ss$).

Notice that $\langle K_s:s\in\Ss\rangle$ is still an Aronszajn tree of perfect sets. It follows that $\Ss$ is not ccc, otherwise forcing with $\Ss$ would yield a strictly descending $\omega_1$-sequence of closed subsets of $2^\omega$, contradicting the fact that $\omega_1$ is preserved. So we can fix an uncountable antichain $\langle s_\alpha:\alpha\in\omega_1\rangle$ in $\Ss$. Set $Y_\alpha=Y\cap K_{s_\alpha}$ for $\alpha\in\omega_1$. Obviously $\cl(Y_\alpha)\cap \cl(Y_\beta)=\varnothing$ whenever $\alpha\neq\beta$, where the closure is taken in $2^\omega$. Furthermore, the fact that $\Ss$ is normal implies that each $Y_\alpha$ is uncountable.
\end{proof}

The following two results show that the statement ``$2^\omega$ has the $\GP$'' is independent of $\ZFC$.
\begin{proposition}\label{gpch}
Assume $\CH$. Then $2^\omega$ does not have the $\GP$.
\end{proposition}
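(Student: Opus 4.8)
The plan is to exhibit a single uncountable $Y\subseteq 2^\omega$ for which the defining requirement of the $\GP$ fails, and a Luzin set is the natural candidate. Under $\CH$ one has $\mathfrak{c}=\omega_1$, hence $\mathsf{cof(BP)}\leq\mathfrak{c}=\omega_1$ (the meager $F_\sigma$ sets form a cofinal family of size $\mathfrak{c}$), and since $\mathsf{cof(BP)}\geq\omega_1$ always, we get $\mathsf{cof(BP)}=\omega_1$. Therefore, as recalled in Section 1, a Luzin set $L\subseteq 2^\omega$ exists. I would take $Y=L$ and show that $L$ admits no splitting into $\omega_1$ uncountable pieces with pairwise disjoint closures.

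So suppose, toward a contradiction, that there are uncountable sets $Y_\alpha\subseteq L$ for $\alpha\in\omega_1$ with $\cl(Y_\alpha)\cap\cl(Y_\beta)=\varnothing$ whenever $\alpha\neq\beta$. The crucial step is to upgrade each $Y_\alpha$ from merely uncountable to topologically large: since $L$ is a Luzin set and $Y_\alpha$ is an uncountable subset of $L$, the set $Y_\alpha$ is non-meager in $2^\omega$. A non-meager set cannot be nowhere dense, so its closure $\cl(Y_\alpha)$ has non-empty interior in $2^\omega$.

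Now I would consider the open sets $U_\alpha=\mathsf{int}(\cl(Y_\alpha))$ for $\alpha\in\omega_1$. Each $U_\alpha$ is non-empty by the previous paragraph, and since the closures $\cl(Y_\alpha)$ are pairwise disjoint, so are the $U_\alpha$. This produces an uncountable family of pairwise disjoint non-empty open subsets of $2^\omega$, contradicting the separability (equivalently, the ccc) of $2^\omega$. Hence no such family $\langle Y_\alpha:\alpha\in\omega_1\rangle$ exists, so $Y=L$ violates the definition of the $\GP$, and therefore $2^\omega$ does not have the $\GP$.

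I do not expect a serious obstacle: the entire content is in the choice of witness. The only implication requiring care is ``uncountable subset of a Luzin set $\Rightarrow$ non-meager $\Rightarrow$ closure has non-empty interior'', after which the collision with the countable chain condition is immediate. It is worth noting that splitting $Y$ into finitely (or even countably) many uncountable pieces with pairwise disjoint closures is always possible, for instance along basic clopen sets; the argument genuinely needs the jump to $\omega_1$ pieces, which is precisely where the ccc of $2^\omega$ intervenes and where the Luzin property is used to force each piece to be topologically fat.
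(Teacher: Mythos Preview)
Your proof is correct and coincides with the paper's second proof essentially verbatim: both take a Luzin set (available under $\CH$ via $\mathsf{cof(BP)}=\omega_1$), note that each uncountable subset is non-meager and hence has closure with non-empty interior, and then use the ccc of $2^\omega$ to force two closures to meet. The paper also records an alternative first proof from $\bbbb=\omega_1$, using an unbounded $<^\ast$-increasing family $B\subseteq\omega^\omega\subseteq(\omega+1)^\omega\approx 2^\omega$ whose every uncountable subset has closure meeting the fixed countable set $\{s^{\frown}\langle\omega,\omega,\ldots\rangle:s\in\omega^{<\omega}\}$.
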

\begin{proof}
We will give two proofs. The first proof uses the fact that $\CH$ implies
$\bbbb=\omega_1$. In particular, we can fix an unbounded family
$B=\{f_\alpha:\alpha\in\omega_1\}\subseteq\omega^\omega$ consisting of increasing functions such that $f_\alpha <^\ast f_\beta$ whenever $\alpha<\beta$. Let $T=(\omega +1)^\omega$. Notice that
$B\subseteq\omega^\omega\subseteq T\approx 2^\omega$.
The uncountable set $B$ will witness that $T$ does not have the $\GP$. Let
$$
T_\infty=\{s^{\frown}\langle\omega,\omega,
\ldots\rangle:s\in\omega^{<\omega}\}\subseteq T.
$$
Since $T_\infty$ is countable, it will be enough to show that $\cl(C)\cap
T_\infty\neq\varnothing$ whenever $C\subseteq B$ is uncountable, where the
closure is taken in $T$. Since every uncountable subset of $B$ is unbounded, this follows from Lemma \ref{overspill} and Lemma \ref{closureincreasing}.

The second proof uses the fact that $\CH$ implies
$\mathsf{cof(BP)}=\omega_1$.
In particular, we can fix a Luzin set $L$ in $2^\omega$. The uncountable set $L$
will witness that $2^\omega$ does not have the $\GP$.
Assume that $L_\alpha$ is an uncountable subset of $L$ for every
$\alpha\in\omega_1$.
By the definition of Luzin set, no $L_\alpha$ can be nowhere dense. Therefore,
there exist non-empty open
subsets $U_\alpha$ of $2^\omega$ for $\alpha\in\omega_1$ such that
$U_\alpha\subseteq\cl(L_\alpha)$ for each $\alpha$.
Let $\alpha\neq\beta$ be such that $U_\alpha\cap U_\beta\neq\varnothing$. It is
clear that $\cl(L_\alpha)\cap\cl(L_\beta)\neq\varnothing$.
\end{proof}

\begin{theorem}\label{gpma}
Assume $\MA+\neg\CH$. Then $2^\omega$ has the $\GP$.
\end{theorem}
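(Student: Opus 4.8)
The plan is to reduce to a set of size $\omega_1$, simplify its position in $2^\omega$, and then build the required family by a transfinite recursion of length $\omega_1$ whose successor steps are fueled by the fact that $\MA+\neg\CH$ makes all small sets meager, and whose limit steps are the delicate point.

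First I would invoke the remark following the definition of the $\GP$: to prove that $2^\omega$ has the $\GP$ it suffices to show that every $Y\subseteq 2^\omega$ with $|Y|=\omega_1$ has the $\GP$. So fix such a $Y$; I must produce uncountable $Y_\alpha\subseteq Y$ for $\alpha\in\omega_1$ with pairwise disjoint closures. Next I would pass to the set $P$ of condensation points of $Y$. Since $2^\omega$ is second countable, $Y\setminus P$ is countable and $P$ is a non-empty perfect subset of $2^\omega$, hence $P\approx 2^\omega$; moreover every non-empty relatively open subset of $P$ meets $Y\cap P$ in an uncountable set. Because $P$ is closed, closures computed in $P$ agree with closures computed in $2^\omega$, so, transporting along a homeomorphism $P\approx 2^\omega$, I may assume from now on that $|Y|=\omega_1$ and $Y$ is \emph{everywhere uncountable}, that is, $|Y\cap U|>\omega$ for every non-empty open $U\subseteq 2^\omega$.

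The engine for the successor steps is the following consequence of $\MA+\neg\CH$: every set of reals of size $<\cccc$ is meager (indeed $\mathsf{add}(\MM)=\cccc>\omega_1$). Hence $Y$, and every subset of it, is meager, which yields a local splitting lemma: if $Z\subseteq Y$ is everywhere uncountable then, writing $Z\subseteq\bigcup_{n}N_n$ with each $N_n$ closed and nowhere dense, some $Z\cap N_n$ is uncountable and has nowhere dense closure. I would then run a recursion producing pairwise disjoint closed nowhere dense sets $F_\alpha$ with $Y\cap F_\alpha$ uncountable, and set $Y_\alpha=Y\cap F_\alpha$; since the $F_\alpha$ are closed and disjoint, the closures $\cl(Y_\alpha)\subseteq F_\alpha$ are disjoint as required. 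One should note that removing a single nowhere dense closed set preserves everywhere uncountability, because inside any basic clopen set one can find a smaller basic clopen set disjoint from it, and $Y$ meets the latter uncountably.

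The hard part is the limit stages. At a countable limit $\gamma$ the accumulated set $M_\gamma=\bigcup_{\beta<\gamma}F_\beta$ is a countable union of nowhere dense sets, hence meager but typically dense, so neither ``$Y\setminus M_\gamma$ is still everywhere uncountable'' nor ``a newly chosen closure avoids $M_\gamma$'' is automatic: a closed nowhere dense set can capture in its closure points of $Y$ that we wished to keep. This is precisely the obstruction exploited by the $\CH$ counterexample of Proposition \ref{gpch}, where an unbounded family of size $\omega_1$ (available because $\bbbb=\omega_1$) has every uncountable subset accumulating to a fixed countable set and hence cannot be separated. Under $\MA+\neg\CH$ I would defeat it by a bookkeeping argument that exploits $\bbbb>\omega_1$ and $\mathsf{add}(\MM)=\cccc>\omega_1$: at every stage $\alpha<\omega_1$ only countably many sets $F_\beta$ have been built, so only countably many constraints are active, and I would maintain alongside the recursion an auxiliary everywhere-uncountable reservoir $R_\alpha\subseteq Y$, disjoint from $M_\alpha$, from which future pieces are never taken and whose points are kept out of every closure $F_\beta$, so that the reservoir survives the countable intersections taken at limits and keeps $Y\setminus M_\alpha$ everywhere uncountable. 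Verifying that such a reservoir can be propagated through all $\omega_1$ limit stages, equivalently that no initial segment of the construction concentrates $Y$ onto the meager set built so far, is where the full strength of $\MA+\neg\CH$ is needed, and is the step I expect to require the most care. Once the recursion is carried to length $\omega_1$, the sets $Y_\alpha=Y\cap F_\alpha$ witness that $Y$, and therefore $2^\omega$, has the $\GP$.
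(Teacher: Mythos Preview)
Your reduction to an everywhere-uncountable $Y$ of size $\omega_1$ is correct, and so is the observation that under $\MA+\neg\CH$ any such $Y$ is meager. But the proposal has a genuine gap exactly where you flag it: the limit stages. The reservoir idea, as you describe it, does not do the work you need. If $R_0\subseteq Y$ is a fixed everywhere-uncountable reservoir, then $R_0$ is dense in $2^\omega$; keeping ``the points of $R_0$ out of every closure $F_\beta$'' means that each $Y_\beta\subseteq Y\setminus R_0$ must satisfy $\cl(Y_\beta)\cap R_0=\varnothing$, i.e.\ you must separate an uncountable piece of $Y\setminus R_0$ from the dense set $R_0$ by a closed set. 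Producing even one such piece is already a separation problem of the same flavour as the $\GP$ itself, so the recursion does not visibly reduce the difficulty; and on top of that you must make the new $F_\alpha$ miss the meager $F_\sigma$ set $M_\alpha$, while $Y'\setminus M_\alpha$ need not be uncountable just because $M_\alpha$ is meager. Your invocation of $\bbbb>\omega_1$ and $\mathsf{add}(\MM)>\omega_1$ is suggestive, but no mechanism is given by which these inequalities control the interaction between the closures $F_\beta$ and the reservoir. The sentence ``is the step I expect to require the most care'' is, in effect, an acknowledgement that the crux of the argument is missing.

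The paper takes a completely different route that sidesteps any transfinite recursion: it builds all $\omega_1$ pieces simultaneously via a single application of $\MA$. One first splits $Y$ into $\omega_1$ pairwise disjoint $\omega_1$-crowded sets $Z_\alpha$ (using Lemma~\ref{findcrowded}), and then defines a poset $\PPP$ whose conditions are pairs $\langle n,F\rangle$ with $F=\langle F_\alpha:\alpha\in\omega_1\rangle$ a finite-support sequence of finite $F_\alpha\subseteq Z_\alpha$ that are already separated at level $n$, meaning $\{x\upharpoonright n:x\in F_\alpha\}\cap\{x\upharpoonright n:x\in F_\beta\}=\varnothing$ for $\alpha\neq\beta$. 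A $\Delta$-system argument shows $\PPP$ is ccc, the $\omega_1$-crowdedness of the $Z_\alpha$ gives density of the sets forcing each $Y_\alpha$ to be uncountable, and a $\DD$-generic filter $G$ yields $Y_\alpha=\bigcup\{F_\alpha^p:p\in G\}$ with pairwise disjoint closures. The point is that this finite-approximation poset handles all $\omega_1$ coordinates at once, with the level-$n$ separation built into the ordering; this is precisely what makes your ``limit stage'' obstacle evaporate.
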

\begin{proof}
Fix $Y\subseteq 2^\omega$ of size $\omega_1$. Let $Y=\{y_\xi:\xi\in\omega_1\}$
be an injective enumeration.
Fix uncountable subsets $Z_\alpha$ of $Y$ for $\alpha\in\omega_1$ such that
$Z_\alpha\cap Z_\beta=\varnothing$ whenever $\alpha\neq\beta$.
By Lemma \ref{findcrowded}, we can also assume that each $Z_\alpha$ is
$\omega_1$-crowded.

Let $\PPP$ be the set of all pairs of the form $p=\langle n,F\rangle=\langle
n^p,F^p\rangle$, where $n
\in\omega$ and $F=\langle F_\alpha:\alpha\in\omega_1\rangle=\langle
F_\alpha^p:\alpha\in\omega_1\rangle$
satisfy the following conditions.
\begin{enumerate}
\item Each $F_\alpha$ is a finite subset of $Z_\alpha$.
\item $\supp(p)=\{\alpha\in\omega_1:F_\alpha\neq\varnothing\}$ is finite.
\item\label{separateFs} $\{x\upharpoonright n:x\in
F_\alpha\}\cap\{x\upharpoonright n:x\in F_\beta\}=\varnothing$ whenever
$\alpha\neq\beta$.
\end{enumerate}
Given $p,q\in\PPP$, define $q\leq p$ if the following conditions are satisfied.
\begin{enumerate}
\item[(4)] $n^q\geq n^p$.
\item[(5)] $F_\alpha^q\supseteq F_\alpha^p$ for each $\alpha$.
\item[(6)] $\{x\upharpoonright n^p:x\in
F_\alpha^q\}=\{x\upharpoonright n^p:x\in F_\alpha^p\}$ whenever
$\alpha\in\supp(p)$.
\end{enumerate}

For any given $\delta,\eta\in\omega_1$, define
$$
D_{\delta,\eta}=\{p\in\PPP:y_\xi\in F_\delta^p\textrm{ for some
}\xi\geq\eta\}.
$$
In order to show that each $D_{\delta,\eta}$ is dense, fix
$p\in\PPP$ and
$\delta,\eta\in\omega_1$. Since the case $\delta\notin\supp(p)$ is
trivial, we will assume that $\delta\in\supp(p)$. Fix $x\in F_\delta^p$. Notice
that $Z_\delta\cap
[x\upharpoonright n^p]$ is uncountable because $Z_\delta$ is $\omega_1$-crowded
and $x\in Z_\delta$.
In particular, there exists $\xi\geq\eta$ such that
$y_\xi\in Z_\delta$ and $y_\xi\upharpoonright n^p=x\upharpoonright n^p$. Let
$F^q=\langle F_\alpha^q:\alpha\in\omega_1\rangle$, where $F_\alpha^q=F_\alpha^p$
for every
$\alpha\neq\delta$ and $F_\delta^q=F_\delta^p\cup\{y_\xi\}$. Define $q=\langle
n^q,F^q\rangle$,
where $n^q\geq n^p$ is such that condition (\ref{separateFs}) is satisfied. It
is clear that $q\in D_{\delta,\eta}$ and $q\leq p$. 

Next, we will show that $\PPP$ is ccc. Fix $\Aa\subseteq\PPP$ such that
$|\Aa|=\omega_1$. By passing to an uncountable subset of $\Aa$, we can assume
that there exists $n\in\omega$ such that $n^p=n$ for all $p\in\Aa$. By the
Delta System Lemma, we can assume that there exists a finite
$R\subseteq\omega_1$ such that $\supp(p)\cap\supp(q)=R$ whenever
$p,q\in\Aa$ are distinct. By passing to an uncountable subset of $\Aa$ once
more, we can assume that $\{x\upharpoonright n:x\in
F_\alpha^p\}=\{x\upharpoonright n:x\in F_\alpha^q\}$ for every $\alpha\in R$
whenever $p,q\in\Aa$. Now fix distinct $p,q\in\Aa$ and let $F^r=\langle
F_\alpha^p\cup
F_\alpha^q:\alpha\in\omega_1\rangle$. Define $r=\langle n^r,F^r\rangle$, where
$n^r\geq n$ is such
that condition (\ref{separateFs}) is satisfied. It is clear that $r\in\PPP$ and
$r\leq p,q$. In particular, $\Aa$ is not an antichain.

Since we are assuming $\MA+\neg\CH$ and the collection of dense sets
$$
\DD=\{D_{\delta,\eta}:\delta,\eta\in\omega_1\}
$$
has size $\omega_1$, there exists a $\DD$-generic filter $G\subseteq\PPP$.
Define
$Y_\alpha=\bigcup\{F_\alpha^p:p\in G\}$ for every $\alpha\in\omega_1$. The dense
sets $D_{\delta,\eta}$ ensure that each $Y_\alpha$ is uncountable.

Finally, in order to get a contradiction, assume that
$w\in\cl(Y_\alpha)\cap\cl(Y_\beta)$, where $\alpha\neq\beta$. Fix $p\in G$ such
that $\{\alpha,\beta\}\subseteq\supp(p)$ and let $n=n^p$. Let $w_\alpha\in
Y_\alpha\cap [w\upharpoonright n]$ and $w_\beta\in Y_\beta\cap [w\upharpoonright
n]$. There exist $q_\alpha, q_\beta\in G$ such that $w_\alpha\in
F_\alpha^{q_\alpha}$ and $w_\beta\in F_\beta^{q_\beta}$.
Without loss of generality, assume that $q_\alpha, q_\beta\leq p$. In
particular, by condition (6), there exist $z_\alpha\in F_\alpha^p$ and
$z_\beta\in
F_\beta^p$ such that $z_\alpha\upharpoonright n=w_\alpha\upharpoonright n$ and
$z_\beta\upharpoonright n=w_\beta\upharpoonright n$. But this implies
$$
z_\alpha\upharpoonright n=w_\alpha\upharpoonright n=w\upharpoonright
n=w_\beta\upharpoonright n=z_\beta\upharpoonright n,
$$
which contradicts condition (\ref{separateFs}).
\end{proof}

\section{The main result}

The aim of this section is to prove the following result, which is an immediate
consequence of Proposition \ref{bbig} and Proposition \ref{bsmall}.

\begin{theorem}\label{main}
The following are equivalent.
\begin{itemize}
\item $\bbbb>\omega_1$.
\item For every space $X$, if $X$ has the $\Pc$ then $X$ has the $\Pa$.
\end{itemize}
\end{theorem}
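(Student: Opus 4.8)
The plan is to prove the two implications separately, matching the split into Proposition \ref{bbig} (the direction $\bbbb>\omega_1\Rightarrow(\forall X)(\Pc\to\Pa)$) and Proposition \ref{bsmall} (the contrapositive of the converse, $\bbbb=\omega_1\Rightarrow(\exists X)(\Pc\wedge\neg\Pa)$); the equivalence is then immediate. The unifying tool is a reformulation of analyticity in an arbitrary space: a set $A\subseteq X$ is analytic if and only if $A=\pi_X(F)$ for some closed $F\subseteq X\times\omega^\omega$, where $\pi_X\colon X\times\omega^\omega\to X$ is the projection. Indeed, fixing a Polish $T\supseteq X$ and writing an analytic $B\subseteq T$ as $\pi_T(F')$ for closed $F'\subseteq T\times\omega^\omega$, one gets $A=B\cap X=\pi_X(F'\cap(X\times\omega^\omega))$. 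The decisive feature of this representation is that projection along a compact factor is a closed map: if $K\subseteq\omega^\omega$ is compact then $\pi_X(F\cap(X\times K))$ is a closed subset of $X$, and it is contained in $A$.

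For Proposition \ref{bbig}, assume $\bbbb>\omega_1$ and that $X$ has the $\Pc$, and let $A=\pi_X(F)$ be an uncountable analytic subset of $X$. First I would fix an uncountable $A_0\subseteq A$ of size $\omega_1$ and, for each $x\in A_0$, choose a witness $z_x\in\omega^\omega$ with $(x,z_x)\in F$. Since $|A_0|=\omega_1<\bbbb$, the family $\{z_x:x\in A_0\}$ is bounded in $<^\ast$, hence contained in a $\sigma$-compact set $\bigcup_n K_n$ with each $K_n\subseteq\omega^\omega$ compact (a bounded subset of $\omega^\omega$ being $\sigma$-compact, which I would record as a small lemma). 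Then $A_0\subseteq\bigcup_n\pi_X(F\cap(X\times K_n))$, so some $\pi_X(F\cap(X\times K_n))$ is uncountable; by the closed-map remark it is an uncountable closed subset of $X$ contained in $A$, and applying the $\Pc$ to it produces a copy of $2^\omega$ inside $A$.

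For Proposition \ref{bsmall}, assume $\bbbb=\omega_1$ and build a counterexample inside $2^\omega\times(\omega+1)^\omega$, using the two hypotheses in complementary coordinates. In the second coordinate fix a $<^\ast$-increasing unbounded family $\{f_\alpha:\alpha<\omega_1\}\subseteq\omega^\omega$ of increasing functions (available since $\bbbb=\omega_1$); in the first coordinate fix a set $W=\{w_\alpha:\alpha<\omega_1\}\subseteq2^\omega$ with the $\GP$, given by Corollary \ref{existsgrinzing} and with the $w_\alpha$ distinct. Let $D=\{(w_\alpha,f_\alpha):\alpha<\omega_1\}$ and take $X=D\cup R$, where $R\subseteq2^\omega\times\bigl((\omega+1)^\omega\setminus\omega^\omega\bigr)$ is a suitably chosen reservoir lying in the part of the closure of $D$ over the points at infinity. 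Since $\omega^\omega$ is $G_\delta$ in $(\omega+1)^\omega$ and $R$ avoids it, $D=X\cap(2^\omega\times\omega^\omega)$ is a relatively $G_\delta$, hence analytic, uncountable subset of $X$ with no copy of $2^\omega$: projection to the second coordinate is injective on $D$, so a copy of $2^\omega$ in $D$ would project to an uncountable compact, hence bounded, subfamily of $\{f_\alpha\}$, whereas every uncountable subset of a $<^\ast$-increasing unbounded $\omega_1$-sequence is cofinal and therefore unbounded. Thus $X$ fails the $\Pa$.

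The main obstacle is verifying that $X$ has the $\Pc$, and this is exactly where the $\GP$ enters. Given an uncountable closed $C\subseteq X$, the easy case is $C\cap D$ countable: arranging $R$ to be $\sigma$-compact makes $C\cap R$ an uncountable closed subset of a Polish space, which contains a copy of $2^\omega$ by Theorem \ref{suslin}. The delicate case is $C\cap D$ uncountable, where I would apply the $\GP$ to $Y=\pi_1(C\cap D)\subseteq W$ to split it into $\omega_1$ uncountable pieces with pairwise disjoint closures in $2^\omega$, and then argue that the accumulation of the corresponding points of $D$ forces a copy of $2^\omega$ into $R\subseteq C$. Making this step precise — choosing $R$ so that it simultaneously keeps $X$ non-Polish, leaves $D$ relatively $G_\delta$, and captures enough limit points to convert the disjoint-closure decomposition into an honest Cantor set inside $C$ — is the technical heart of the argument and the step I expect to require the most care.
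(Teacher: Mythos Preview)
Your plan matches the paper's in both directions, and your argument for Proposition~\ref{bbig} is essentially the paper's proof. For Proposition~\ref{bsmall} you have the right architecture (the product $2^\omega\times(\omega+1)^\omega$, the diagonal $D$ built from a $\GP$ set and an unbounded $<^\ast$-chain, the $G_\delta$ witness to $\neg\Pa$), but the endgame is simpler than your last paragraph anticipates, and leaving $R$ unspecified is the only real gap.

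The paper takes $R=\bigcup_{n\in\omega}T_n$ with $T_n=2^\omega\times\{x\in(\omega+1)^\omega:x(n)=\omega\}$, i.e., the \emph{entire} complement of $2^\omega\times\omega^\omega$ in the ambient compact space, not merely the part lying in $\cl(D)$. Each $T_n$ is a copy of $2^\omega$, hence Polish with the $\Pc$ by Theorem~\ref{suslin}, so for an uncountable closed $C\subseteq X$ it suffices that $C\cap R$ be uncountable; this in turn reduces to showing that $\cl(Y)\cap R$ is uncountable for every uncountable $Y\subseteq D$ (closure taken in the ambient space). The $\GP$ is used only to downgrade ``uncountable'' to ``non-empty'': splitting $\pi_1(Y)$ into $\omega_1$ uncountable pieces with pairwise disjoint closures in $2^\omega$ yields uncountable $Y_\alpha\subseteq Y$, and a single point $p_\alpha\in\cl(Y_\alpha)\cap R$ exists for each $\alpha$ by Lemma~\ref{overspill} together with compactness of $2^\omega$; disjointness of the first-coordinate closures makes the $p_\alpha$ pairwise distinct. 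There is no need to ``convert the disjoint-closure decomposition into an honest Cantor set'' by hand: once $C\cap R$ is uncountable, Suslin's theorem applied inside one of the Polish pieces $T_n$ hands you the copy of $2^\omega$ for free. Your worries about keeping $X$ non-Polish and $D$ relatively $G_\delta$ are automatically handled by this choice of $R$, since $\omega^\omega$ is $G_\delta$ in $(\omega+1)^\omega$.
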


\begin{proposition}\label{bbig}
Assume $\bbbb>\omega_1$. Let $X$ be a space with the $\Pc$. Then $X$ has the
$\Pa$.\end{proposition}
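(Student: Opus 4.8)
The plan is to reduce the whole problem to a single application of the $\Pc$. First I would embed $X$ as a subspace of the Hilbert cube $T=[0,1]^\omega$, which is a \emph{compact} Polish space, and fix an analytic subset $A$ of $X$; by definition $A=B\cap X$ for some analytic $B\subseteq T$. Using the standard representation of analytic sets as projections (see \cite{kechris}), I would write
\[
B=\{y\in T:\exists x\in\omega^\omega\ (x,y)\in F\}
\]
for some closed set $F\subseteq\omega^\omega\times T$. Assuming $A$ is uncountable, the goal becomes to produce a copy of $2^\omega$ inside $A$.

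Next I would exploit $\bbbb>\omega_1$. Choose distinct points $a_\alpha\in A$ for $\alpha\in\omega_1$, and for each $\alpha$ a witness $x_\alpha\in\omega^\omega$ with $(x_\alpha,a_\alpha)\in F$. Since the family $\{x_\alpha:\alpha\in\omega_1\}$ has size at most $\omega_1<\bbbb$, it is bounded: there is $g\in\omega^\omega$ with $x_\alpha<^\ast g$ for every $\alpha$. A pigeonhole argument (on the least $m$ past which $x_\alpha$ is dominated by $g$, together with $x_\alpha\upharpoonright m$) then yields $m\in\omega$, a sequence $s\in\omega^m$, and an uncountable $I\subseteq\omega_1$ such that $x_\alpha\upharpoonright m=s$ and $x_\alpha(n)<g(n)$ for all $n\geq m$, whenever $\alpha\in I$. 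Consequently every such $x_\alpha$ lies in
\[
\KK=\{x\in\omega^\omega:x\upharpoonright m=s\text{ and }x(n)<g(n)\text{ for all }n\geq m\},
\]
which is homeomorphic to $\prod_{n\geq m}\{0,\ldots,g(n)-1\}$ and hence compact.

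The decisive step is a compactness-of-projection argument. Since $\KK\times T$ is compact and $F$ is closed, the set $F\cap(\KK\times T)$ is compact, so its projection
\[
G=\{y\in T:\exists x\in\KK\ (x,y)\in F\}
\]
is a compact, hence closed, subset of $T$. By construction $a_\alpha\in G$ for every $\alpha\in I$, so $G\cap X$ is uncountable; moreover $G\subseteq B$, whence $G\cap X\subseteq B\cap X=A$; and $G\cap X$ is relatively closed in $X$. Applying the $\Pc$ to this uncountable closed subset of $X$ produces a copy of $2^\omega$ contained in $G\cap X\subseteq A$, as required.

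The main obstacle is conceptual rather than computational. The naive idea of applying the $\Pc$ to $\cl(A)\cap X$ fails, because the resulting copy of $2^\omega$ could lie entirely in the closure outside of $A$. The role of $\bbbb>\omega_1$ is precisely to confine uncountably many witnesses to a single compact ``box'' $\KK$, and it is this confinement — turning the analytic projection defining $B$ into a \emph{closed} projection via compactness of $\KK\times T$ — that guarantees the extracted closed set remains inside $A$. The remaining verifications (that the pigeonhole data can be chosen as stated, and that $G\cap X$ is genuinely relatively closed) are routine.
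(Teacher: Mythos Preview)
Your argument is correct and follows essentially the same route as the paper: embed $X$ in the Hilbert cube, represent the analytic set as the projection of a closed set from $\omega^\omega$, use $\bbbb>\omega_1$ to trap $\omega_1$-many witnesses inside a compact box, and observe that the projection from a compact set is closed so that the $\Pc$ applies. The only cosmetic difference is that the paper covers the witness set by countably many compact sets $K_n$ and then invokes the uncountable cofinality of $\omega_1$ to select one with uncountable projection, whereas your pigeonhole on $(m,s)$ produces a single compact box directly; the two are equivalent bookkeeping for the same idea.
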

\begin{proof}
Assume without loss of generality that $X$ is a subspace of $T=[0,1]^\omega$. Fix an analytic subset $A$ of $T$ such that $A\cap X$ is uncountable. We will show that $A\cap X$ contains a copy of $2^\omega$. Since $A$ is analytic, there exists a closed subset $C$ of
$T\times\omega^\omega$ such that
$\pi[C]=A$, where $\pi: T\times\omega^\omega\longrightarrow
T$ is the projection on the first coordinate. Since $A\cap X$ is uncountable, it is possible to fix $Z\subseteq\omega^\omega$ such that
$|Z|\leq\omega_1$ and $\pi[(T\times Z)\cap C]\cap X$ is uncountable.

By the assumption $\bbbb>\omega_1$, there exist compact subsets $K_n$ of $\omega^\omega$ for
$n\in\omega$ such that $Z\subseteq\bigcup_{n\in\omega}K_n$. Observe that each $\pi[(T\times K_n)\cap C]\cap X$ is a closed subset of $X$
contained in $A\cap X$.
Furthermore, since $\omega_1$ has uncountable cofinality, at least one of them
must be uncountable. The fact that $X$ has the $\Pc$ concludes the proof.
\end{proof}

In the remainder of this section we will
employ the following three lemmas, which can be safely considered folklore.

\begin{lemma}\label{noperfect}
Assume that $B=\{f_\alpha:\alpha\in\omega_1\}\subseteq\omega^\omega$ is an
unbounded family such that $f_\alpha <^\ast f_\beta$ whenever $\alpha<\beta$.
Then $\cl(C)$ is not compact whenever $C\subseteq B$ is uncountable, where the
closure is taken in $\omega^\omega$.
\end{lemma}
\begin{proof}
Notice that every uncountable subset of $B$ is unbounded. Therefore, we can assume $C=B$ without loss of generality. Assume, in order to get a contradiction, that $\cl(B)$ is compact. Let $\pi_n:\omega^\omega\longrightarrow\omega$ be the projection on the $n$-th coordinate for every $n\in\omega$. Then $\pi_n[\cl(B)]$ is a compact subset of $\omega$ for each $n$. It follows that $B\subseteq\cl(B)\subseteq\prod_{n\in\omega}g(n)$ for some $g\in\omega^\omega$, which contradicts the fact that $B$ is unbounded.
\end{proof}

\begin{lemma}\label{overspill}
Assume that $B\subseteq\omega^\omega$ is an
unbounded family. Then $\cl(B)\nsubseteq\omega^\omega$, where the closure is taken in $(\omega+1)^\omega$.
\end{lemma}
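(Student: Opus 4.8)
The plan is to argue by contradiction: suppose $\cl(B)\subseteq\omega^\omega$, where the closure is taken in $(\omega+1)^\omega$, and derive that $B$ is bounded. The key structural observation is that $(\omega+1)^\omega$ is compact, being a countable product of the compact space $\omega+1$ (the one-point compactification of the discrete space $\omega$, equivalently the ordinal $\omega+1$ with its order topology). Consequently $\cl(B)$, as a closed subset of a compact space, is itself compact.

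Next I would observe that, since each finite $n$ is isolated in $\omega+1$, the topology that $\omega^\omega$ inherits as a subspace of $(\omega+1)^\omega$ coincides with its usual topology as the Baire space. Under the assumption $\cl(B)\subseteq\omega^\omega$, this makes $\cl(B)$ a compact subset of $\omega^\omega$, and from here the argument parallels Lemma \ref{noperfect}. Letting $\pi_n\colon\omega^\omega\longrightarrow\omega$ denote the projection onto the $n$-th coordinate, each $\pi_n[\cl(B)]$ is a compact, hence finite, subset of the discrete space $\omega$. Defining $g(n)=1+\max\pi_n[\cl(B)]$ therefore yields a function $g\in\omega^\omega$ with $\cl(B)\subseteq\prod_{n\in\omega}g(n)$.

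Finally, since $B\subseteq\cl(B)\subseteq\prod_{n\in\omega}g(n)$, every $f\in B$ satisfies $f(n)<g(n)$ for all $n$, and in particular $f<^\ast g$. Thus $g$ bounds $B$, contradicting the assumption that $B$ is unbounded. The only point requiring care, though it is elementary, is the topological identification in the second paragraph: one must confirm that a closure in $(\omega+1)^\omega$ landing inside $\omega^\omega$ really is compact in the Baire-space sense, so that the coordinatewise extraction of $g$ goes through. Everything else is the standard fact that compact subsets of $\omega^\omega$ are contained in a product of finite sets.
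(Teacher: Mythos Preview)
Your proof is correct and follows the same route as the paper: observe that $\cl(B)$ is compact as a closed subset of the compact space $(\omega+1)^\omega$, and then use the argument of Lemma \ref{noperfect} to conclude that $\cl(B)\subseteq\omega^\omega$ would force $B$ to be bounded. The paper simply cites Lemma \ref{noperfect} at the second step, whereas you unpack that argument inline and make explicit the harmless point about the subspace topology.
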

\begin{proof}
Observe that $\cl(B)$ is compact because it is a closed subset of the compact space $(\omega+1)^\omega$. So $\cl(B)\subseteq\omega^\omega$ is impossible by Lemma \ref{noperfect}.
\end{proof}

\begin{lemma}\label{closureincreasing}
Assume that $B\subseteq\omega^\omega$ consists of
increasing functions. Then for every $f\in\cl(B)\setminus\omega^\omega$, where the closure is
taken in $(\omega+1)^\omega$, there exists $s\in\omega^{<\omega}$ such that $f=s^{\frown}\langle\omega,\omega,\ldots\rangle$.
\end{lemma}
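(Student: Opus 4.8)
The plan is to exploit the fact that the pointwise closure of a set of monotone functions consists of monotone functions, together with the observation that a non-decreasing element of $(\omega+1)^\omega$ which attains the value $\omega$ must be constantly $\omega$ from some coordinate on.

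First I would observe that every $g\in B$ is increasing, hence satisfies $g(n)\leq g(n+1)$ for all $n$, so that $B$ is contained in the set
$$
N=\{h\in(\omega+1)^\omega: h(n)\leq h(n+1)\textrm{ for every }n\in\omega\}.
$$
The next step is to verify that $N$ is closed in $(\omega+1)^\omega$. Since $\omega+1$ carries the order topology, the relation $\{(a,b)\in(\omega+1)^2:a\leq b\}$ is closed in $(\omega+1)^2$, and for each $n$ the map sending $h$ to $(h(n),h(n+1))$ is continuous; hence each set $\{h\in(\omega+1)^\omega:h(n)\leq h(n+1)\}$ is closed, and $N$, being the intersection of these sets over $n\in\omega$, is closed as well. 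Therefore $\cl(B)\subseteq N$, where the closure is taken in $(\omega+1)^\omega$.

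Now fix $f\in\cl(B)\setminus\omega^\omega$. By the previous paragraph $f\in N$, and since $f\notin\omega^\omega$ there exists a coordinate at which $f$ takes the value $\omega$. Let $m$ be the least such coordinate. Monotonicity of $f$ then gives $f(n)=\omega$ for every $n\geq m$, while $f(n)\in\omega$ for every $n<m$ by minimality of $m$. Setting $s=f\upharpoonright m\in\omega^{<\omega}$, we conclude that $f=s^{\frown}\langle\omega,\omega,\ldots\rangle$, as desired.

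I expect no serious obstacle here: the only point requiring a moment of care is the verification that $N$ is closed, which hinges on using the order topology on the compactification $\omega+1$ (so that $\{(a,b):a\leq b\}$ is genuinely closed, even at the point where both coordinates equal $\omega$) rather than the discrete topology on $\omega$.
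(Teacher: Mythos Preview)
Your proof is correct and follows essentially the same idea as the paper's: both arguments hinge on the fact that the order relation $\leq$ is closed in $(\omega+1)^2$, so limits of increasing functions remain non-decreasing. The only difference is packaging---you show globally that the set $N$ of non-decreasing functions is closed and contains $\cl(B)$, whereas the paper argues by contradiction, exhibiting for a would-be counterexample $f$ (with $f(m)=\omega$ and $f(n)<\omega$ for some $m<n$) an explicit basic open neighborhood of $f$ disjoint from $B$.
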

\begin{proof}
Assume, in order to get a contradiction, that $f\in\cl(B)$ is such that $f(m)=\omega$ and $f(n)<\omega$, where $m<n$. Notice that
$$
U=\{x\in (\omega+1)^\omega: x(m)>f(n)\text{ and }x(n)=f(n)\}
$$
is a neighborhood of $f$ in $(\omega+1)^\omega$. However, the fact that $B$ consists of increasing functions implies that $U\cap B=\varnothing$, which is a contradiction.
\end{proof}

\begin{proposition}\label{bsmall}
Assume $\bbbb=\omega_1$. Then there exists a subspace $X$ of $2^\omega$ that has
the $\Pc$ but not the $\Pa$.
\end{proposition}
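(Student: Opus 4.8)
The goal is to build a subspace $X$ of $2^\omega$ that is "closed-PSP but not analytic-PSP". The plan is to produce a single uncountable analytic set $A$ such that $A\cap X$ is uncountable yet contains no copy of $2^\omega$ (witnessing the failure of the $\Pa$), while simultaneously arranging that every uncountable \emph{closed} subset of $X$ does contain a copy of $2^\omega$ (securing the $\Pc$). The hypothesis $\bbbb=\omega_1$ is exactly what lets us build the pathological analytic set, so I would start there.

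First I would exploit $\bbbb=\omega_1$ to fix an unbounded family $B=\{f_\alpha:\alpha\in\omega_1\}\subseteq\omega^\omega$ consisting of increasing functions with $f_\alpha<^\ast f_\beta$ whenever $\alpha<\beta$ (a $<^\ast$-increasing unbounded family of the least size always exists). Working in $T=(\omega+1)^\omega\approx 2^\omega$, the set $B$ sits inside the analytic (indeed $G_\delta$) set $\omega^\omega$, and Lemmas \ref{overspill} and \ref{closureincreasing} tell us precisely where uncountable subsets of $B$ accumulate in $T$: every uncountable $C\subseteq B$ has a limit point of the form $s^{\frown}\langle\omega,\omega,\ldots\rangle$. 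The countable set $T_\infty=\{s^{\frown}\langle\omega,\omega,\ldots\rangle:s\in\omega^{<\omega}\}$ therefore meets the closure of every uncountable subset of $B$. This is the mechanism by which $B$ can be made to carry no perfect set while refusing to split into pieces with disjoint closures; it is the same phenomenon used in the first proof of Proposition \ref{gpch}.

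The key design decision is how to assemble $X$ so that its \emph{closed} subsets behave well while $B$ (or a relative of it) witnesses the failure of the analytic PSP. I expect the right construction to interleave $B$ with a Bernstein-type selection: enumerate in type $\cccc$ all pairs $(K,A)$ where $K$ is a copy of $2^\omega$ potentially threatening a closed subset and $A$ ranges over the (continuum-many) analytic sets, and by transfinite recursion decide membership in $X$ so as to (i) keep $B\subseteq X$ as the obstruction to the $\Pa$, and (ii) for every analytic $A$ with $A\cap X$ destined to be uncountable, throw enough of a perfect set into $A\cap X$, \emph{unless} $A$ is the specific bad analytic set capturing $B$. The closed-PSP requirement should be reduced to the analytic case via a covering argument dual to Proposition \ref{bbig}: a closed subset of $X$ is $A\cap X$ for an analytic (in fact closed) $A$, and one arranges perfect subsets to land inside.

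The main obstacle will be reconciling the two demands without collision: the recursion must insert perfect sets to satisfy the closed-PSP for genuinely "large" closed sets, while never accidentally inserting a perfect set into the analytic witness built from $B$. I would control this by keeping the bad analytic set $A_0$ essentially equal to $\omega^\omega$ (or to $B$'s $F_\sigma$-hull) and ensuring, via Lemma \ref{noperfect}, that no uncountable subset of $B$ has compact closure, hence $B$ contains no copy of $2^\omega$; then at each stage of the recursion I would only ever add points to $X$ from outside a carefully reserved "forbidden" region around $B$, so that the copies of $2^\omega$ demanded by the closed sets are placed in the complement of $A_0$. Verifying that this reservation leaves enough room—i.e. that every threatening closed set still admits a perfect subset disjoint from the forbidden region—is the crux, and I would handle it by noting that a copy of $2^\omega$ meeting $X$ in an uncountable closed set has continuum-many points to spare, so avoiding the meager/measure-controlled bad region costs nothing.
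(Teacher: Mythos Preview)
Your plan has a genuine gap, and it is precisely the obstacle that the paper's proof is designed to overcome. The problem is with the closed subsets of $X$ that arise from $B$ itself. Take any uncountable $Y\subseteq B$ and look at its closure $\cl_X(Y)=\cl_T(Y)\cap X$, where $T=(\omega+1)^\omega$. This is an uncountable closed subset of $X$, so the $\Pc$ demands a copy of $2^\omega$ inside it. But you chose $B$ to consist of increasing functions, so Lemma~\ref{closureincreasing} gives $\cl_T(Y)\setminus\omega^\omega\subseteq T_\infty$, a countable set. Hence any copy of $2^\omega$ inside $\cl_X(Y)$ would have to lie (up to countably many points) in $\omega^\omega\cap X$, i.e.\ inside your bad analytic set $A_0\cap X$. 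That directly contradicts the requirement that $A_0\cap X$ contain no copy of $2^\omega$. Your Bernstein recursion cannot repair this: there is simply no room in $\cl_T(Y)$ outside $A_0$ in which to place a perfect set, so the phrase ``copies of $2^\omega$ demanded by the closed sets are placed in the complement of $A_0$'' cannot be carried out for these particular closed sets. The final sentence of your proposal assumes a perfect set already meets the closed set in question, which is exactly what is at issue.

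The paper's proof avoids this trap by abandoning the transfinite recursion entirely and instead building $X$ explicitly. It passes to the product $T=(\omega+1)^\omega\times 2^\omega$ and pairs each $f_\alpha$ with a point $g_\alpha$ from a set $W\subseteq 2^\omega$ with the Grinzing property (Corollary~\ref{existsgrinzing}), setting $Z=\{\langle f_\alpha,g_\alpha\rangle:\alpha\in\omega_1\}$. Then $X=Z\cup\bigcup_n T_n$, where each $T_n=\{x:x(n)=\omega\}\times 2^\omega$ is a full copy of $2^\omega$. The extra $2^\omega$ factor and the $\GP$ together guarantee that for every uncountable $Y\subseteq Z$ the closure $\cl_T(Y)$ meets $\bigcup_n T_n$ in \emph{uncountably} many points; since each $T_n$ is Polish, Theorem~\ref{suslin} then supplies the required perfect subset. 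The remark following the proof in the paper explains that working in $(\omega+1)^\omega$ alone, as you propose, fails for exactly the reason above: the overspill into $\bigcup_n T_n$ can be countable (indeed contained in $T_\infty$) when the $f_\alpha$ are increasing. The Grinzing property is the missing ingredient in your outline.
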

\begin{proof}
Since $\bbbb=\omega_1$, we can fix an unbounded family $B=\{f_\alpha:\alpha\in\omega_1\}\subseteq\omega^\omega$ such that $f_\alpha <^\ast f_\beta$ whenever $\alpha<\beta$. By Corollary \ref{existsgrinzing}, it is possible to fix $W\subseteq 2^\omega$ such that
$|W|=\omega_1$ and $W$ has the $\GP$. Let
$W=\{g_\alpha:\alpha\in\omega_1\}$ be an injective enumeration. Define
$Z=\{\langle f_\alpha,g_\alpha\rangle:\alpha\in\omega_1\}$.

Let $T=(\omega +1)^\omega\times 2^\omega$ and notice that $T\approx 2^\omega$.
For every $n\in\omega$, define
$$
T_n=\{x\in (\omega +1)^\omega:x(n)=\omega\}\times 2^\omega\subseteq T
$$
and observe that each $T_n\approx 2^\omega$. Let 
$$
X=Z\cup\bigcup_{n\in\omega}T_n\subseteq T.
$$
Notice that $Z$ is a $\mathsf{G_\delta}$ subset of $X$ because each $T_n$ is
compact.
Assume, in order to get a contradiction, that $Z$ contains a copy $K$ of
$2^\omega$.
Let $\pi:\omega^\omega\times 2^\omega\longrightarrow\omega^\omega$ be the
projection on the first coordinate. Then $\pi[K]$ is an uncountable
compact subset of $B$, because
$\pi\upharpoonright Z$ is injective and continuous. Since this contradicts Lemma
\ref{noperfect}, it follows that the space $X$ does not have the
$\Pg$. In particular, $X$ does not have the $\Pa$.

Observe that each $T_n$ has the $\Pc$ by Theorem \ref{suslin}. Therefore, in order to show that $X$ has the $\Pc$, it will be enough to prove that $\cl(Y)\cap \bigcup_{n\in\omega}T_n$ is uncountable for every uncountable $Y\subseteq Z$, where the closure is taken in $T$. Since $W$ has the $\GP$, it will be enough to show that $\cl(Y)\cap \bigcup_{n\in\omega}T_n$ is non-empty for every uncountable $Y\subseteq Z$. Notice that every uncountable subset of $B$ is unbounded. Therefore, we can assume that $Y=Z$ without loss of generality. By Lemma \ref{overspill}, there exists $f\in(\omega+1)^\omega\setminus\omega^\omega$ and a sequence $\langle \alpha_n:n\in\omega\rangle$ of elements of $\omega_1$ such that $\langle f_{\alpha_n}:n\in\omega\rangle$ converges to $f$ in $(\omega+1)^\omega$. On the
other hand, since $2^\omega$ is compact, there exists $g\in 2^\omega$ and a
subsequence of $\langle g_{\alpha_n}:n\in\omega\rangle$ that converges to $g$ in
$2^\omega$. It follows that the corresponding subsequence of $\langle \langle
f_{\alpha_n},g_{\alpha_n}\rangle:n\in\omega\rangle$ converges to $\langle
f,g\rangle$, which is clearly an element of $\bigcup_{n\in\omega}T_n$.
\end{proof}

One might wonder whether the factor $2^\omega$ in the definitions of $T$ and
$T_n$ is really needed.
In other words, if one defines $T=(\omega+1)^\omega$, $T_n=\{x\in
T:x(n)=\omega\}$ for $n\in\omega$, and $Z=B$,
does it follow that $\cl(Y)\cap \bigcup_{n\in\omega}T_n$ is uncountable
for every uncountable $Y\subseteq Z$? This first attempt was originally
suggested by
Kunen to the author, and
it obviously shaped the above proof. However, the answer is ``no'' in general. For
example, if $B$ consists of increasing functions then $\cl(Y)\cap \bigcup_{n\in\omega}T_n$ will always be contained in $T_\infty=\{s^{\frown}\langle\omega,\omega,
\ldots\rangle:s\in\omega^{<\omega}\}$ by Lemma \ref{closureincreasing}. In fact, this is the
very issue that led to the introduction of the Grinzing property.

We conclude this section by noting that the independence of the statement ``For every space $X$, if $X$ has the $\Pc$ then $X$ has the $\Pa$'' is much easier to prove than the sharper Theorem \ref{main}. In fact, it is enough to combine Proposition \ref{bbig} with the following result. We remark that, by a result of Judah and Shelah (see \cite{judahshelah}), the assumption $\bbbb=\omega_1$ (or even $\mathsf{unif(BP)}=\omega_1$) is not sufficient to guarantee the existence of a Luzin set.

\begin{proposition}[Zdomskyy]\label{luzin}
Assume that there exists a Luzin set in $\omega^\omega$. Then there exists a subspace $X$ of $2^\omega$ that has the $\Pc$ but not the $\Pa$.
\end{proposition}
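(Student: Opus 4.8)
The plan is to reuse the architecture of the proof of Proposition \ref{bsmall}, replacing the role of the Grinzing property by the Luzin hypothesis, and in fact dispensing with the auxiliary factor $2^\omega$. Let $L\subseteq\omega^\omega$ be a Luzin set. I would work in $T=(\omega+1)^\omega\approx 2^\omega$, set $T_n=\{x\in T:x(n)=\omega\}$ for $n\in\omega$ (so each $T_n$ is compact and homeomorphic to $2^\omega$), and define $X=L\cup\bigcup_{n\in\omega}T_n\subseteq T$. Since $\bigcup_{n\in\omega}T_n$ is an $\mathsf{F_\sigma}$ subset of $T$ that is contained in $X$, the set $L=X\setminus\bigcup_{n\in\omega}T_n$ is a $\mathsf{G_\delta}$ subset of $X$. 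First I would check that $X$ fails the $\Pg$, hence the $\Pa$: if $L$ contained a copy $K$ of $2^\omega$, then $K$ would be an uncountable compact, hence bounded, subset of $\omega^\omega$ (as in the proof of Lemma \ref{noperfect}), contradicting the fact that every uncountable subset of the Luzin set $L$ is non-meager and therefore unbounded (a bounded subset of $\omega^\omega$ is meager).

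For the positive direction I would show that $X$ has the $\Pc$. Let $F$ be an uncountable closed subset of $X$, and write $F=\cl(F)\cap X$ with the closure taken in $T$. If $F\cap\bigcup_{n\in\omega}T_n$ is uncountable then some $F\cap T_n$ is uncountable; being closed in $T_n\approx 2^\omega$, it contains a copy of $2^\omega$ by Theorem \ref{suslin}. So it remains to rule out the case in which $F\cap\bigcup_{n\in\omega}T_n$ is countable. In that case $Y:=F\cap L$ is uncountable, hence non-meager; and since $Y\subseteq F$ with $F$ closed in $X$, we have $\cl(Y)\cap\bigcup_{n\in\omega}T_n\subseteq F\cap\bigcup_{n\in\omega}T_n$, so $\cl(Y)\cap\bigcup_{n\in\omega}T_n$ is countable. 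The whole proof thus reduces to the following claim, which is exactly where the Luzin hypothesis does its work.

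\emph{Claim: if $Y\subseteq\omega^\omega$ is non-meager then $\cl(Y)\setminus\omega^\omega$ is uncountable, where the closure is taken in $T$.} I would prove the contrapositive. Let $C=\cl(Y)\cap\omega^\omega$, a relatively closed subset of $\omega^\omega$ that contains $Y$. If $C$ had empty interior in $\omega^\omega$ it would be nowhere dense, making $Y$ meager; so $C$ must contain a basic clopen set $\{x\in\omega^\omega:s\subseteq x\}$ for some $s\in\omega^{<\omega}$. Taking closures in $T$ then yields $\{x\in T:s\subseteq x\}\subseteq\cl(Y)$, because $\{x\in\omega^\omega:s\subseteq x\}$ is dense in $\{x\in T:s\subseteq x\}$ (approximate any entry equal to $\omega$ by large integers). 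Since $\{x\in T:s\subseteq x\}\setminus\omega^\omega$ has size continuum, $\cl(Y)\setminus\omega^\omega$ is uncountable, as desired. Applying the claim to $Y=F\cap L$ contradicts the countability established above, which completes the verification that $X$ has the $\Pc$.

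The only real content is this claim, and it is the step I expect to be the crux: it isolates precisely the feature of non-meager subsets of $\omega^\omega$—that their closure in $T$ must ``spill over'' $\omega^\omega$ along an entire basic box rather than along a countable set such as $T_\infty$ (compare the increasing-functions example and Lemma \ref{closureincreasing})—which is what allows a bare Luzin set to succeed where an arbitrary unbounded family required the extra factor $2^\omega$ and the Grinzing property.
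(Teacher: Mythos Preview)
Your proposal is correct and follows essentially the same route as the paper's proof: the construction $X=L\cup\bigcup_n T_n\subseteq(\omega+1)^\omega$, the failure of the $\Pg$ via $L$, and the key step that an uncountable (hence non-nowhere-dense) subset $Y\subseteq L$ has closure in $T$ containing an entire basic box $\{x\in T:s\subseteq x\}$, whose intersection with $\bigcup_n T_n$ is uncountable. The only differences are cosmetic: you package the last step as a stand-alone claim about non-meager sets, and you justify ``$L$ contains no copy of $2^\omega$'' via boundedness rather than directly via nowhere-density, but the substance is the same.
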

\begin{proof}
This proof is very similar to that of Proposition \ref{bsmall}, so we will be brief. Define $T=(\omega+1)^\omega$ and $T_n=\{x\in
T:x(n)=\omega\}$ for $n\in\omega$. Let $Z=L$ be a Luzin set in $\omega^\omega$. Define $X=Z\cup\bigcup_{n\in\omega}T_n\subseteq T$. Notice that $Z$ witnesses that $X$ does not have the $\Pg$, because a Luzin set cannot contain any copy of $2^\omega$. In particular, $X$ does not have the $\Pa$.

In order to show that $X$ has the $\Pc$, it will be enough to prove that $\cl(Y)\cap \bigcup_{n\in\omega}T_n$ is uncountable for every uncountable $Y\subseteq Z$, where the closure is taken in $T$. So fix such a $Y$. Notice that $Y$ is not nowhere dense in $\omega^\omega$ because $Z$ is a Luzin set. Hence $U\subseteq\cl(Y)$, where $U=\{x\in\omega^\omega:s\subseteq x\}$ for some $s\in\omega^{<\omega}$. It is easy to check that $\cl(U)=\{x\in T:s\subseteq x\}$, which clearly has uncountable intersection with $\bigcup_{n\in\omega}T_n$.
\end{proof}

\section{The complete picture}

At this point, it seems natural to ask whether finer distinctions are possible.
For example, is it consistent that there exists a space with the
$\mathsf{PSP(Borel)}$ but not the $\Pa$? Is it consistent that there exists a
space with the $\Pg$ but not the
$\mathsf{PSP}(\mathbf{\Pi}^0_3)$? We only ask for consistency results because
Proposition \ref{bbig} shows that there are no such examples in $\ZFC$.

The following result of Solecki (see \cite[Theorem 1]{solecki})
shows that the answer to such questions (and several other variants of them) is ``no'' (see Corollary \ref{aneqgd}). When combined with Theorem \ref{main}, Proposition \ref{wo}, Proposition \ref{onedimbernstein}, and Proposition \ref{twodimbernstein}, this will allow us to obtain the ``complete solution'' promised in the introduction (see Theorem \ref{completepicture}).

\begin{theorem}[Solecki]\label{checkGdeltas}
Let $\II$ be a family of closed subsets of a Polish space $T$. Let $A$ be an
analytic subset of $T$. Then one of the following conditions holds.
\begin{enumerate}
\item\label{Asmall} $A\subseteq\bigcup\JJ$ for some countable $\JJ\subseteq\II$.
\item\label{Gbig} There exists a $\mathsf{G_\delta}$ subset $G$ of $T$ such that
$G\subseteq A$ and $G\nsubseteq\bigcup\JJ$ for every countable
$\JJ\subseteq\II$.
\end{enumerate}
\end{theorem}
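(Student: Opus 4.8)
The plan is to prove the dichotomy by reducing the analytic set to the projection of a closed set, isolating its ``core'' with a Cantor--Bendixson-style derivative, and then extracting the $\mathsf{G_\delta}$ set by a fusion argument governed by Baire category. First I would reduce to a closed set. Since $A$ is analytic, fix a closed subset $F$ of $Y=T\times\omega^\omega$ with $\pi[F]=A$, where $\pi\colon Y\to T$ is the projection, and for $P\in\II$ put $\tilde P=\pi^{-1}(P)\cap F$. The elementary but crucial observation is that, for any $S\subseteq F$, one has $\pi[S]\subseteq\bigcup\JJ$ for some countable $\JJ\subseteq\II$ if and only if $S$ is contained in a countable union of the sets $\tilde P$. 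Thus the whole problem transfers to $F$ and the family $\{\tilde P:P\in\II\}$: call a subset of $F$ \emph{small} if it is covered by countably many of the $\tilde P$, and the task becomes to decide whether $F$ is small, producing a non-small $\mathsf{G_\delta}$ subset if it is not.

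Next I would run a derivative on $F$. For $S\subseteq F$ let $U(S)$ be the union of all basic open $V\subseteq Y$ for which $S\cap V$ is small; since $Y$ is second countable, $S\cap U(S)$ is a countable union of small sets and hence small, so I may set $S'=S\setminus U(S)$. Iterating transfinitely (with $F^{(0)}=F$, $F^{(\alpha+1)}=(F^{(\alpha)})'$, and intersections at limits) produces a decreasing sequence of closed subsets of $Y$, which must stabilize at some countable stage $\theta$ because there is no strictly decreasing $\omega_1$-sequence of closed subsets of a second countable space. Writing $E=F^{(\theta)}$, the removed part $F\setminus E$ is a union of $\theta<\omega_1$ small sets, hence small. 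If $E=\varnothing$ then $F$ itself is small and we are in case (\ref{Asmall}). Otherwise $E$ is a non-empty closed, hence Polish, set satisfying $E'=E$, so that $E\cap V$ is non-small for every basic open $V$ meeting $E$; equivalently, each $\tilde P\cap E$ has empty interior in $E$, i.e.\ is nowhere dense in $E$.

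The key reformulation comes now. Projecting the nowhere-density statement down, one checks that $P\cap K$ is nowhere dense in $K:=\pi[E]\subseteq A$ for every $P\in\II$ (were $K\cap V\subseteq P$ for some open $V$, then $\pi^{-1}(V)\cap E$ would be a non-empty open subset of $E$ lying inside $\tilde P$). Consequently every small subset of $K$ is meager in $K$, and so in order to verify case (\ref{Gbig}) it suffices to exhibit a set $G\subseteq K$ that is $\mathsf{G_\delta}$ in $T$ and non-meager in $K$: such a $G$ cannot be covered by countably many members of $\II$. This is the step that disposes of the possible uncountability of $\II$ in one stroke, since a single category argument defeats all countable subfamilies simultaneously, rather than forcing one to diagonalize against them one at a time.

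The construction of $G$ is the main obstacle, and is where the genuine work lies. The easy subcase is when some non-empty $E\cap V$ collapses under $\pi$ to a single point $t$: then no member of $\II$ contains $t$ (otherwise $E\cap V$ would be small), so $G=\{t\}$ already works. In general I would build $G$ by a fusion inside the Polish space $E$, constructing a scheme of basic open boxes with shrinking diameters, separating the $T$-coordinate along incompatible nodes so that $\pi$ restricts to a homeomorphism on the resulting body and its image is genuinely $\mathsf{G_\delta}$ in $T$, while using the non-smallness of $E\cap V$ at each node, together with Baire-category bookkeeping, to keep the image non-meager in $K$. The delicate point, deserving the most care, is reconciling the two demands on $G$: descending to a $\mathsf{G_\delta}$ subset of $T$ pushes the construction to be thin in the fibre direction, whereas non-meagerness in $K$ pushes it to be fat. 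It is precisely the derivative that supplies enough local non-smallness at every node to carry both requirements through the fusion.
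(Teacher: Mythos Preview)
The paper does not prove this theorem; it is quoted (with citation to \cite[Theorem 1]{solecki}) and used as a black box to derive Corollary \ref{aneqgd}. So there is no proof here to compare against.

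That said, your outline tracks the architecture of Solecki's actual argument rather closely: lift to a closed $F\subseteq T\times\omega^\omega$, run a Cantor--Bendixson derivative with respect to the $\sigma$-ideal generated by the $\tilde P$, and extract the $\mathsf{G_\delta}$ from the kernel $E$. Two remarks on the final step, which you rightly flag as the crux. First, the target ``$G$ non-meager in $K=\pi[E]$'' is awkward since $K$ need not be Polish; the cleaner aim is to produce a non-empty $G\subseteq K$ that is $\mathsf{G_\delta}$ in $T$ (hence Polish) such that $P\cap G$ is nowhere dense in $G$ for every $P\in\II$ --- then Baire category in $G$ itself defeats all countable $\JJ$ at once. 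Second, ``separating the $T$-coordinate along incompatible nodes'' is not enough by itself to guarantee this nowhere-density; the fusion has to actively diagonalize, at each node and against a fixed enumeration of a basis for $G$ under construction, using that every relatively open piece of $E$ is non-small to refine away from any prescribed $\tilde P$. Your sketch gestures at this (``Baire-category bookkeeping''), but as written it does not make clear how a single scheme handles the possibly uncountable family $\II$; the point is that it suffices to hit each \emph{basic} open set of $G$ with witnesses avoiding each $P$ in a countable dense subfamily, and closedness of the $P$'s upgrades this to all of $\II$.
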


\begin{corollary}\label{aneqgd}
Let $X$ be a space with the $\Pg$. Then $X$ has the $\Pa$.
\end{corollary}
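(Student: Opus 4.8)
The plan is to reduce the statement to the covering dichotomy provided by Theorem \ref{checkGdeltas}. First I would embed $X$ as a subspace of the Polish space $T=[0,1]^\omega$, so that a subset $A$ of $X$ is analytic precisely when $A=B\cap X$ for some analytic $B\subseteq T$. Fix such an $A$ and $B$; the goal is to show that $A$ is either countable or contains a copy of $2^\omega$. The useful reduction is that it suffices to produce an \emph{uncountable} $\mathsf{G_\delta}$ subset of $X$ contained in $A$: if $G$ is a $\mathsf{G_\delta}$ subset of $T$ with $G\cap X\subseteq A$ and $G\cap X$ uncountable, then $G\cap X$ is an uncountable $\mathsf{G_\delta}$ subset of $X$, so the hypothesis that $X$ has the $\Pg$ immediately yields a copy of $2^\omega$ inside $G\cap X\subseteq A$.

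The natural way to obtain such a $G$ is to apply Theorem \ref{checkGdeltas} in $T$ to the analytic set $B$, taking as $\II$ a suitable family of ``$X$-small'' closed sets, the first candidate being
$$
\II=\{F\subseteq T: F\text{ is closed and }F\cap X\text{ is countable}\}.
$$
In alternative (\ref{Asmall}) one gets $B\subseteq\bigcup\JJ$ for some countable $\JJ\subseteq\II$; intersecting with $X$ then exhibits $A=B\cap X$ as a countable union of countable sets, so $A$ is countable and we are done. In alternative (\ref{Gbig}) one obtains a $\mathsf{G_\delta}$ set $G\subseteq B$ with $G\nsubseteq\bigcup\JJ$ for every countable $\JJ\subseteq\II$; since $G\subseteq B$ gives $G\cap X\subseteq A$, by the reduction it would be enough to know that $G\cap X$ is uncountable.

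The hard part is exactly this last step, and it is where the crude family above is not adequate. The intended argument is the contrapositive: if $G\cap X$ were countable, one wants to cover $G$ by countably many members of $\II$, contradicting the defining property of $G$ in alternative (\ref{Gbig}). This covering can fail for the family displayed above, because a $\mathsf{G_\delta}$ set with countable trace on $X$ need not be a countable union of \emph{$T$-closed} sets with countable trace on $X$: when $X$ is dense and meets every nonempty open set in an uncountable set, every $F\in\II$ is nowhere dense, so a comeager $G$ disjoint from $X$ lies in alternative (\ref{Gbig}) while contributing nothing to $A$. Consequently the real content of the proof is to engineer the generating family $\II$ (exploiting that $T$ may be taken compact, and measuring ``$X$-smallness'' in a way that absorbs sets which are topologically large but $X$-thin) so that membership in the generated $\sigma$-ideal corresponds \emph{exactly} to having countable trace on $X$. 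Once that calibration is achieved, alternative (\ref{Gbig}) necessarily returns a $G$ with $G\cap X$ uncountable, and the appeal to the $\Pg$ completes the proof; I expect verifying this calibration, rather than the bookkeeping around the two alternatives, to be the main obstacle.
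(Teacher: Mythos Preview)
Your setup and use of Theorem~\ref{checkGdeltas} coincide with the paper's proof line for line: the paper embeds $X$ in $T=[0,1]^\omega$, takes exactly the family $\II=\{K\subseteq T:K\text{ closed and }|K\cap X|<\omega_1\}$ that you call the ``first candidate'', rules out alternative~(\ref{Asmall}) via the uncountable cofinality of $\omega_1$, and passes to the $G$ from alternative~(\ref{Gbig}). So your plan is not a different route; it is the paper's route.

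Where you and the paper part ways is in what happens next, and here you have been \emph{more} careful than the paper, not less. The paper does not engineer a refined family; it uses the naive $\II$ throughout and, immediately after obtaining $G$, writes ``In particular $G\cap X$ is uncountable'' with no further argument. Your worry about this step is well-founded: the example you sketch (a dense $X$ meeting every nonempty open set uncountably, so that every member of $\II$ is nowhere dense and a comeager $G_\delta$ disjoint from $X$ satisfies alternative~(\ref{Gbig}) while having empty trace on $X$) is correct and shows that the bare statement of Theorem~\ref{checkGdeltas} does not by itself force $G\cap X$ to be uncountable. The ``calibration'' you anticipate as the main obstacle is therefore not carried out in the paper; its proof is precisely the bookkeeping you describe, with the step you flag as hard simply asserted. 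Whether one reads this as the author tacitly invoking an extra property of the specific $G$ built in Solecki's construction, or as a gap, the point is that the difficulty you isolated is not resolved on the page --- so do not look for a clever choice of $\II$ there, because there isn't one.
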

\begin{proof}
Without loss of generality, assume that $X$ is a subspace of $T=[0,1]^\omega$.
Define
$$
\II=\{K\subseteq T: K\textrm{ is closed in }T\textrm{ and }|X\cap
K|<\omega_1\}.
$$
Let $A$ be an analytic subset of $T$ such that $A\cap X$ is uncountable. Since
$\omega_1$ has uncountable cofinality, condition $(\ref{Asmall})$ in Theorem
\ref{checkGdeltas} cannot hold. Therefore, condition $(\ref{Gbig})$ must hold
for some $\mathsf{G_\delta}$ subset $G$ of $T$. In particular $G\cap X$ is
uncountable, so it contains a copy of $2^\omega$ by the
$\Pg$. Since $G\subseteq A$, it follows that $A\cap X$
contains a copy of $2^\omega$,
which is what we needed to show.
\end{proof}

The following proposition answers a question that appeared in an earlier version of this article.
\begin{proposition}[Miller]\label{wo}
There exists a space $X$ that has the $\Pa$ but not the $\Pco$.
\end{proposition}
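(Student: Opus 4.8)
The natural candidate for a coanalytic set that is uncountable but contains no copy of $2^\omega$ is a suitably thinned-out version of the set $\WO$ of codes for well-orderings. Recall that $\WO$ is a $\mathbf{\Pi}^1_1$ (that is, coanalytic) subset of $2^\omega$, and that to each $x\in\WO$ one assigns the countable ordinal $|x|<\omega_1$ that $x$ codes. The plan is to let $T=2^\omega$ and to build $X$ as a subspace of $T$ of the form
$$
X=(T\setminus\WO)\cup S,
$$
where $S\subseteq\WO$ meets each rank level in exactly one point: for every $\alpha<\omega_1$ pick some $x_\alpha\in\WO$ with $|x_\alpha|=\alpha$ (possible since every countable ordinal is coded), and set $S=\{x_\alpha:\alpha<\omega_1\}$. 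Since $\WO$ is coanalytic and $T\setminus\WO$ is analytic, both pieces of $X$ are controllable by the tools at hand.

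First I would check that $X$ fails the $\Pco$. Observe that $S=\WO\cap X$ is a coanalytic subset of $X$, and it is uncountable since it has size $\omega_1$. The decisive ingredient is the boundedness theorem for $\WO$ (see \cite{kechris}): every analytic subset of $\WO$ has bounded rank. Hence if $A\subseteq S$ were an uncountable analytic subset of $2^\omega$, then $\{|x|:x\in A\}$ would be a bounded, hence countable, subset of $\omega_1$; but $S$ contains exactly one point of each rank, so $A$ would be countable, a contradiction. In particular $S$ contains no copy of $2^\omega$, since such a copy is compact and therefore an uncountable analytic subset of $2^\omega$. Thus $S$ witnesses that $X$ does not have the $\Pco$.

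Next I would verify the $\Pa$. An analytic subset of $X$ has the form $A\cap X$ for some analytic $A\subseteq T$; suppose $A\cap X$ is uncountable. Write
$$
A\cap X=(A\cap S)\cup\bigl(A\cap(T\setminus\WO)\bigr).
$$
The first set $A\cap S$ is an analytic subset of $\WO$, so by the same boundedness argument it is countable, and therefore $A\cap(T\setminus\WO)$ is uncountable. But $A\cap(T\setminus\WO)$ is an analytic subset of the Polish space $T$, being the intersection of the analytic sets $A$ and $T\setminus\WO$, so by Suslin's theorem (Theorem \ref{suslin}) it contains a copy of $2^\omega$. Since $T\setminus\WO\subseteq X$, this copy lies inside $A\cap X$, as required.

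The one genuinely non-elementary ingredient is the boundedness theorem, which is what simultaneously forces every analytic subset of $S$ to be countable and permits $S$ itself to be uncountable; this is the $\ZFC$ substitute for the failure of the perfect set property for coanalytic sets that, inside a Polish space, one would only obtain under $\VL$. Everything else is bookkeeping: one must confirm that the relevant sets have the claimed complexity — it is essential that $T\setminus\WO$ is analytic, so that Suslin's theorem applies to its analytic subsets — and that a copy of $2^\omega$ produced in $T$ is a genuine copy inside the subspace $X$, which is immediate since it is a compact subset of $X$.
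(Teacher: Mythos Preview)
Your construction is exactly the one the paper uses (up to cosmetic choices: the paper works in $\omega^\omega$ and only picks codes for infinite ordinals), and your verification that $X$ fails the $\Pco$ is fine. The gap is in the $\Pa$ argument: the sentence ``$A\cap S$ is an analytic subset of $\WO$, so by the same boundedness argument it is countable'' is not justified, and in fact both the premise and the conclusion can fail. Take $A=T=2^\omega$; then $A\cap S=S$, which is uncountable and not analytic (if it were, boundedness would force it to have bounded rank). Intersecting an analytic set with the non-analytic set $S$ gives you no complexity bound whatsoever.

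The repair is the contrapositive manoeuvre the paper carries out: rather than trying to bound $A\cap S$ directly, assume toward a contradiction that $A\cap(T\setminus\WO)$ is countable. Then $A\cap\WO=A\setminus\bigl(A\cap(T\setminus\WO)\bigr)$ differs from the analytic set $A$ by a countable set, hence is itself analytic and contained in $\WO$; boundedness now applies and forces $A\cap\WO$ (and a fortiori $A\cap S$) to have bounded rank, contradicting the uncountability of $A\cap X$. So the shape of your argument is right, but the boundedness theorem must be fed an analytic set that you have actually shown to lie inside $\WO$, and that requires the extra step.
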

\begin{proof}
Let $\WO\subseteq\omega^\omega$ be the set of codes of well-orderings of $\omega$, as in \cite[page 485]{jech}. For each infinite $\alpha<\omega_1$, let $z_\alpha\in\WO$ be a code for a well-ordering of $\omega$ of type $\alpha$. Define $Z=\{z_\alpha:\omega\leq\alpha<\omega_1\}$ and $X=Z\cup(\omega^\omega\setminus\WO)\subseteq\omega^\omega$. By \cite[Lemma 25.9]{jech}, $\WO$ is a coanalytic subset of $\omega^\omega$, so $X\cap\WO=Z$ is an uncountable coanalytic subset of $X$. Since the Boundedness Lemma (see \cite[Corollary 25.14]{jech}) implies that the only analytic subsets of $\omega^\omega$ contained in $Z$ are the countable ones, $Z$ does not contain any copy of $2^\omega$. Therefore, $X$ does not have the $\Pco$.

In order to prove that $X$ has the $\Pa$, let $A$ be an analytic subset of $\omega^\omega$ such that $A\cap X$ is uncountable. Notice that $A\cap(\omega^\omega\setminus\WO)$ must be uncountable, otherwise $A\cap\WO=A\setminus (A\cap(\omega^\omega\setminus\WO))$ would be an uncountable analytic subset of $\omega^\omega$ that has uncountable intersection with $Z$, contradicting the Boundedness Lemma. Now it remains to apply Theorem \ref{suslin}.
\end{proof}

\begin{proposition}\label{onedimbernstein}
There exists a space $X$ that has the $\Po$ but not the $\Pc$.
\end{proposition}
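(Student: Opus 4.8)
The plan is to realize a Bernstein set as a closed subset of a space whose open subsets are nonetheless forced to contain copies of $2^\omega$. I would work inside $T=2^\omega$ and fix a nowhere dense copy $C$ of $2^\omega$ in $2^\omega$ (for example $C=\{x\in 2^\omega: x(2n+1)=0\textrm{ for every }n\in\omega\}$, which is homeomorphic to $2^\omega$ via the even coordinates and misses a basic clopen subset of every non-empty open set). Since $C$ is an uncountable Polish space, I can fix a Bernstein set $B$ in $C$; such a $B$ is uncountable and contains no copy of $2^\omega$, because any copy $K$ of $2^\omega$ contained in $B$ would be a perfect subset of $C$ disjoint from $C\setminus B$, contradicting the definition of a Bernstein set. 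Then I would set
$$
X=B\cup(2^\omega\setminus C)\subseteq 2^\omega.
$$

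First I would verify that $X$ does not have the $\Pc$. Since $B\subseteq C$ and $(2^\omega\setminus C)\cap C=\varnothing$, one has $C\cap X=B$, so $B$ is a closed subset of $X$. As $B$ is uncountable and contains no copy of $2^\omega$, it witnesses the failure of the $\Pc$.

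Next I would show, in fact more than is needed, that every non-empty open subset of $X$ contains a copy of $2^\omega$, which immediately gives the $\Po$. Let $U$ be open in $2^\omega$ with $U\cap X\neq\varnothing$; in particular $U\neq\varnothing$. Since $C$ is nowhere dense, $U\setminus C$ is a non-empty open subset of $2^\omega$, and by construction $U\setminus C\subseteq 2^\omega\setminus C\subseteq X$, so $U\setminus C\subseteq U\cap X$. As any non-empty open subset of $2^\omega$ contains a basic clopen set $[s]\approx 2^\omega$, it follows that $U\cap X$ contains a copy of $2^\omega$.

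The only point requiring care is the interaction between the two requirements: the ``bad'' set $B$ must be closed, yet open sets must still catch perfect sets. This is exactly what placing $B$ inside a nowhere dense closed set $C$ and filling in the dense open complement $2^\omega\setminus C$ achieves---the complement feeds every non-empty open set a clopen Cantor set while never contributing to the closed set $C\cap X=B$. The existence of the Bernstein set $B$ in $C$ is the only non-constructive ingredient, and it is available in $\ZFC$.
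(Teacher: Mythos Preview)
Your proof is correct and uses the same mechanism as the paper: place a Bernstein set $B$ inside a closed nowhere dense copy of $2^\omega$ in the ambient space, and fill in the dense open complement so that every non-empty open subset of $X$ automatically contains a Cantor set. The paper carries this out in $[0,1]\times[0,1]$, with the top edge $[0,1]\times\{1\}$ playing the role of your $C$ and the slab $[0,1]\times[0,1)$ that of $2^\omega\setminus C$.

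Your choice of ambient space, however, buys something extra: since you work inside $2^\omega$, your $X$ is zero-dimensional. The paper obtains a zero-dimensional example only in the subsequent proposition, via a considerably less elementary construction using filters and an independent family homeomorphic to $2^\omega$. Your argument therefore gives a simpler proof of that strengthening as well.
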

\begin{proof}
Let $B$ be a Bernstein set in $[0,1]$. Define
$$
X=(B\times\{1\})\cup([0,1]\times [0,1))\subseteq [0,1]\times [0,1].
$$
Notice that $B$ is an uncountable closed subset of $X$ containing no copies of $2^\omega$. Therefore $X$ does not have the $\Pc$. To see that $X$ has the $\Po$, observe that every non-empty open subset of $X$ contains a copy of $[0,1]\times [0,1]$, hence a copy of $2^\omega$.
\end{proof}

The following proposition, which is essentially due to Medini and Milovich (see \cite[Theorem 28]{medinimilovich}), shows that the space given by Proposition \ref{onedimbernstein} can even be made zero-dimensional. Next, we recall some terminology that will be used in its proof. Every filter is assumed to be on $\omega$.
Furthermore, we will assume that $\COF\subseteq\FF\subsetneq\PP(\omega)$ for every filter $\FF$, where $\COF=\{x\subseteq\omega:|\omega\setminus x|<\omega\}$. Recall that $\CC\subseteq\PP(\omega)$ has the \emph{finite intersection property} if $\bigcap F$ is infinite whenever $F$ is a
non-empty finite subset of $\CC$. Given $x\subseteq\omega$, define $x^0=x$ and
$x^1=\omega\setminus x$. Recall that $\Aa\subseteq\PP(\omega)$ is an
\emph{independent family} if $\bigcap\{x^{\nu(x)}:x\in F\}$ is infinite whenever
$F$ is a non-empty finite subset of $\Aa$ and $\nu:F\longrightarrow 2$.

\begin{proposition}
There exists a subspace $X$ of $2^\omega$ that has the $\Po$ but not the $\Pc$.
\end{proposition}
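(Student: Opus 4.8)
The plan is to realize the decomposition behind Proposition~\ref{onedimbernstein} inside $2^\omega=\PP(\omega)$, taking for $X$ a carefully built filter $\FF$, as isolated by Medini and Milovich. The abundance of perfect sets inside any filter will play the role of the dense piece $[0,1]\times[0,1)$, while the trace of $\FF$ on a fixed nowhere dense perfect set will play the role of the closed Bernstein line $B\times\{1\}$. (If one does not insist that $X$ be a filter, the product $X=(B\times\{z_0\})\cup(2^\omega\times(2^\omega\setminus\{z_0\}))$, with $B\subseteq 2^\omega$ a Bernstein set and $z_0\in 2^\omega$ fixed, already yields a zero-dimensional example by the argument of Proposition~\ref{onedimbernstein}.)

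First I would dispose of the $\Po$, which comes almost for free from the filter structure. If $A\in\FF$ is co-infinite, then the cone $\{B\in 2^\omega:B\supseteq A\}$ is contained in $\FF$ by upward closure and is a copy of $2^\omega$, being homeomorphic to $2^{\omega\setminus A}$. Provided $\FF$ contains at least one co-infinite set $A_0$, every nonempty basic relatively clopen piece $[s]\cap\FF$ contains a co-infinite element: if $A\in[s]\cap\FF$, then $A\cap(A_0\cup\{k\in\dom(s):s(k)=1\})$ lies in $\FF$, belongs to $[s]$, and is co-infinite, and the cone above it, intersected with $[s]$, is a copy of $2^\omega$ inside $[s]\cap\FF$. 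Hence every nonempty relatively open subset of $\FF$ contains a copy of $2^\omega$, so $X=\FF$ has the $\Po$.

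The substance is to secure that $X$ fails the $\Pc$ while keeping $\FF$ a genuine filter. I would fix the nowhere dense perfect set $D=\{x\in 2^\omega:x(2n)=0\text{ for all }n\}$, all of whose points are co-infinite and which contains no cone $\{B:B\supseteq A\}$, since any such cone contains sets meeting the even numbers. The aim is to build $\FF\supseteq\COF$ so that $\FF\cap D$ is a Bernstein subset of $D$, that is, $P\cap\FF\neq\varnothing$ and $P\setminus\FF\neq\varnothing$ for every perfect $P\subseteq D$. Then $\FF\cap D$ is closed in $X=\FF$ (as $D$ is closed in $2^\omega$), it is uncountable, and it contains no copy of $2^\omega$, because a perfect $P\subseteq\FF\cap D\subseteq D$ would contradict $P\setminus\FF\neq\varnothing$. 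Thus $X$ does not have the $\Pc$.

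The filter $\FF$ is then produced by a transfinite recursion of length $\cccc$ over an enumeration $\langle P_\xi:\xi<\cccc\rangle$ of the perfect subsets of $D$: at stage $\xi$ one commits a fresh point of $P_\xi$ into $\FF$ and forbids another point of $P_\xi$ from it. The main obstacle, and the reason the finite intersection property and independent families are recalled, is to perform these $\cccc$ commitments while preserving a filter base. Committing a point $a_\xi\in D$ forces all its supersets into $\FF$, so the committed sets together with $\COF$ must retain the finite intersection property, while forbidding a point $b_\xi$ requires that no finite intersection of committed sets be contained in $b_\xi$. Both demands are met by routing the diagonalization through a fixed independent family of size $\cccc$: since fewer than $\cccc$ of its members have been used at each stage, enough independence survives to keep the relevant finite intersections infinite and to keep every forbidden point out permanently. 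Checking that these two requirements never conflict, and in particular that co-infinite witnesses for the $\Po$ survive the diagonalization, is the delicate bookkeeping at the heart of the proof.
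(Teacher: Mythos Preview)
Your parenthetical remark already settles the proposition: the space $X=(B\times\{z_0\})\cup\bigl(2^\omega\times(2^\omega\setminus\{z_0\})\bigr)\subseteq 2^\omega\times 2^\omega\approx 2^\omega$ is a correct zero-dimensional instance of the argument in Proposition~\ref{onedimbernstein}, and is in fact simpler than what the paper does. The paper, following \cite{medinimilovich}, insists on producing $X$ as a filter, but it avoids your transfinite recursion entirely by using a single structural fact you did not exploit: there exists an independent family $\Aa\subseteq 2^\omega$ that is itself homeomorphic to $2^\omega$. One then takes a Bernstein set $B$ in $\Aa$ and lets $\FF$ be any filter extending $\CC=B\cup\{\omega\setminus x:x\in\Aa\setminus B\}$; independence of $\Aa$ gives the finite intersection property for $\CC$ outright, and $\Aa\cap\FF=B$ is the desired uncountable closed set with no Cantor subset. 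No diagonalization, no bookkeeping.

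By contrast, your recursion sketch has a real gap at exactly the point you flag as delicate. You need, at stage $\xi$, to pick $a_\xi\in P_\xi$ that is positive for the filter generated so far, and simultaneously to exclude some $b_\xi\in P_\xi$ permanently. But an arbitrary perfect $P_\xi\subseteq D$ need not meet a pre-fixed independent family at all, so ``routing the diagonalization through a fixed independent family'' does not obviously supply the required $a_\xi\in P_\xi$; and once some $a_\eta$'s and some $\omega\setminus b_\mu$'s are committed, there is no argument given that $P_\xi$ still contains a point whose intersection with every finite combination of these is infinite. The paper's trick of making the perfect set \emph{be} the independent family is precisely what dissolves this difficulty.
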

\begin{proof}
Throughout this proof, we will identify $\PP(\omega)$ with $2^\omega$ via
characteristic functions.
In particular, we will identify every filter and every independent family with a
subspace of $2^\omega$. 

First, we will show that every filter $\FF\subseteq 2^\omega$ has the $\Po$.
This is trivial if
$\FF=\COF$, so assume that $\FF\supsetneq\COF$. Let $U$ be an uncountable open
subset of $\FF$. In particular $U\neq\varnothing$, so $[s]\cap\FF\subseteq U$
for some $s\in{}^{<\omega}2$. Now pick any coinfinite $z\in\FF$ such that
$z\upharpoonright\dom(s)=s$. It is easy to see that $\{x\in [s]:z\subseteq x\}$
is a copy of $2^\omega$ contained in $U$.

By \cite[Lemma 7]{medinimilovich}, there exists an independent family
$\Aa\subseteq 2^\omega$ that is homeomorphic to $2^\omega$.
In particular, we can fix a Bernstein set $B$ in $\Aa$. Since $\Aa$ is an
independent family, the collection
$$
\CC=B\cup\{\omega\setminus x:x\in\Aa\setminus B\}
$$
has the finite intersection property (actually, it is an independent
family). Therefore,
there exists a filter $\FF$ such that $\CC\subseteq\FF$. Let $X=\FF$. It is easy
to realize that
$B$ is an uncountable closed subset of $X$ that does not contain any copy of
$2^\omega$.
In particular, $X$ does not have the $\Pc$.
\end{proof}

\begin{proposition}\label{twodimbernstein}
There exists a space $X$ that has the $\Pcl$ but not the $\Po$.
\end{proposition}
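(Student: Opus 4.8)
The plan is to build a \emph{connected} space, so that its only clopen subsets are $\varnothing$ and $X$ itself; then $\Pcl$ will hold as soon as $X$ is uncountable and contains a copy of $2^\omega$, while $\Po$ can still fail, since a connected space may perfectly well have a badly behaved open subset. Concretely, I would work in $T=[0,1]\times[0,1]$, write $L=[0,1]\times\{0\}$ for the bottom edge, and set $X=B\cup L$, where $B\subseteq[0,1]\times(0,1)$ is a dense, connected, \emph{totally imperfect} subset of the open strip (a ``two-dimensional Bernstein set''). Here totally imperfect means that $B$ contains no copy of $2^\omega$; equivalently, since an uncountable compact metrizable space always contains a copy of $2^\omega$, every compact subset of $B$ is countable. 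Note that $U=X\cap([0,1]\times(0,1))=B$ is open in $X$ (the edge $L$ sits at height $0\notin(0,1)$), so $B$ will be the open set witnessing the failure of $\Po$.

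The heart of the matter is the construction of $B$, which I would carry out by a transfinite recursion of length $\cccc$. Since the hyperspace of compact subsets of $[0,1]^2$ is itself compact metrizable, there are exactly $\cccc$ compact sets; in particular I may enumerate as $\langle K_\alpha:\alpha<\cccc\rangle$ all subcontinua of $[0,1]^2$ that separate $[0,1]^2$, as $\langle P_\alpha:\alpha<\cccc\rangle$ all copies of $2^\omega$ in $[0,1]^2$, and fix a countable base $\langle V_n:n\in\omega\rangle$ for $(0,1)^2$. At stage $\alpha$ I would put into $B$ one fresh point of $K_\alpha$ lying off both edges $[0,1]\times\{0,1\}$ (this is the requirement that will force connectedness), put into $B$ one fresh point of each box $V_n$ (to guarantee density, hence $L\subseteq\cl(B)$), and reserve for the complement one point of $P_\alpha$ not yet placed into $B$ (this guarantees $P_\alpha\nsubseteq B$, hence total imperfectness). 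At each step fewer than $\cccc$ points have been committed, while every separating continuum, every box, and every copy of $2^\omega$ has size $\cccc$ and meets the open strip in a set of size $\cccc$ (a separating continuum cannot lie in the disconnected union $[0,1]\times\{0,1\}$); so all choices can be made.

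Granting such a $B$, the verification is short. The set $U=B$ is open in $X$, has size $\cccc$, and contains no copy of $2^\omega$, so $X$ fails $\Po$ (even $\Pg$, as $B$ is open, hence $\mathsf{G_\delta}$). For $\Pcl$, I claim $X$ is connected. Suppose $X=P\sqcup Q$ with $P,Q$ nonempty and clopen in $X$; as $B$ is connected it lies in one piece, say $B\subseteq P$, so $Q\subseteq L$. Picking $(x,0)\in Q$ and a neighborhood $V$ of it in $T$ with $V\cap X\subseteq Q$, density of $B$ forces $V\cap B\neq\varnothing$, contradicting $V\cap X\subseteq Q\subseteq L$. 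Hence $X$ is connected, so its only clopen subsets are $\varnothing$ and $X$; since $X\supseteq L\approx[0,1]$ is uncountable and contains a copy of $2^\omega$, $X$ has the $\Pcl$.

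The one genuinely nontrivial point is the claim that $B$ is connected, and this is where meeting every separating continuum is used, via the classical planar separation theorem: if a closed subset $S$ of the Peano continuum $[0,1]^2$ separates two of its points, then some connected component of $S$ already separates them. Indeed, were $B$ disconnected, I could cover it by disjoint open $U_1,U_2$ with $B\cap U_i\neq\varnothing$, and then $S=[0,1]^2\setminus(U_1\cup U_2)$ would be a closed set disjoint from $B$ separating a point of $U_1$ from a point of $U_2$; a separating component of $S$ would be one of the $K_\alpha$, contradicting $B\cap K_\alpha\neq\varnothing$. Verifying this separation input, together with the Bernstein-style bookkeeping that keeps the required points off the two edges available throughout the recursion, is the part that needs care; everything else is routine.
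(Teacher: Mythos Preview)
Your proof is correct and follows essentially the same approach as the paper's: glue a connected, totally imperfect ``two-dimensional Bernstein'' set $B$ in an open strip to a copy of $[0,1]$ on its boundary, so that connectedness trivializes $\Pcl$ while $B$ itself is the bad open set. The paper is terser---it takes $B$ to be an arbitrary Bernstein set in $[0,1]\times[0,1)$, attaches the top edge $[0,1]\times\{1\}$, and simply cites \cite{vanmills} for the connectedness of $B$---whereas you run the transfinite recursion and spell out the planar separation argument explicitly, but the content is the same.
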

\begin{proof}
Let $B$ be a Bernstein set in $[0,1]\times [0,1)$. Define
$$
X=B\cup([0,1]\times \{1\})\subseteq [0,1]\times [0,1].
$$
Notice that $B$ is an uncountable open subset of $X$ containing no copies of $2^\omega$. Therefore $X$ does not have the $\Po$.On the other hand, the fact that $B$ is connected (use the same argument as in
the proof of \cite[Lemma 3.2]{vanmills}) implies that $X$ is connected. Since
$X$ contains a copy of $[0,1]$, hence a copy of $2^\omega$, it follows that $X$
has the $\Pcl$.
\end{proof}
\noindent Notice that the space $X$ given by Proposition \ref{twodimbernstein} cannot be zero-dimensional because every open set in a zero-dimensional space can be written as a countable union of clopen sets.

\begin{theorem}\label{completepicture}
Consider the following conditions on a space $X$.
\begin{enumerate}
\item\label{coanalytic} $X$ has the $\Pco$.
\item\label{analytic} $X$ has the $\Pa$.
\item\label{Gdelta} $X$ has the $\Pg$.
\item\label{Fsigma} $X$ has the $\Pf$.
\item\label{closed} $X$ has the $\Pc$.
\item\label{open} $X$ has the $\Po$.
\item\label{clopen} $X$ has the $\Pcl$.
\end{enumerate}
The implications $(\ref{coanalytic})\rightarrow(\ref{analytic})\leftrightarrow (\ref{Gdelta})\rightarrow
(\ref{Fsigma})\leftrightarrow (\ref{closed})\rightarrow (\ref{open})\rightarrow
(\ref{clopen})$ hold for
every $X$. The implication $(\ref{Gdelta})\leftarrow (\ref{Fsigma})$ holds
for every $X$ if and only if $\bbbb>\omega_1$. There exist $\ZFC$
counterexamples to the implications $(\ref{coanalytic})\leftarrow(\ref{analytic})$, $(\ref{closed})\leftarrow (\ref{open})$, and
$(\ref{open})\leftarrow (\ref{clopen})$.
\end{theorem}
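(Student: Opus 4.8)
The plan is to assemble the whole statement from the results already in hand, using two cheap ingredients repeatedly: the monotonicity principle noted right after the definition of the $\PSP$ (namely $\mathsf{PSP(\bG')}$ implies $\PSP$ whenever every $\bG$ subset is a $\bG'$ subset), and the elementary pointclass containments valid in any separable metrizable space. The containments I will use are $\mathsf{clopen}\subseteq\mathsf{open}$, $\mathsf{clopen}\subseteq\mathsf{closed}$, $\mathsf{closed}\subseteq\mathsf{F_\sigma}$, $\mathsf{open}\subseteq\mathsf{F_\sigma}$ (open sets are $F_\sigma$ in metric spaces), $\mathsf{F_\sigma}\subseteq\mathsf{analytic}$, $\mathsf{G_\delta}\subseteq\mathsf{analytic}$, and $\mathsf{G_\delta}\subseteq\mathsf{coanalytic}$ (every Borel set is coanalytic).

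First I would clear the implications that reduce to monotonicity or to a one-line argument. From $\mathsf{clopen}\subseteq\mathsf{open}$ we get $(6)\to(7)$; from $\mathsf{closed}\subseteq\mathsf{F_\sigma}$ we get $(4)\to(5)$; from $\mathsf{G_\delta}\subseteq\mathsf{analytic}$ we get $(2)\to(3)$; and from $\mathsf{F_\sigma}\subseteq\mathsf{analytic}$ we get $(2)\to(4)$. The reverse half $(5)\to(4)$ of the lower equivalence is the only nontrivial elementary step: an uncountable $F_\sigma$ subset of $X$ is a countable union of closed subsets of $X$, so by regularity of $\omega_1$ one of them is uncountable and contains a copy of $2^\omega$ by $(5)$. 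The deep half $(3)\to(2)$ is exactly Corollary \ref{aneqgd} (Solecki's theorem), which together with $(2)\to(3)$ yields the equivalence $(2)\leftrightarrow(3)$; combining $(3)\to(2)$ with $(2)\to(4)$ gives $(3)\to(4)$. The one routing that needs care is $(1)\to(2)$: since $\mathsf{analytic}$ and $\mathsf{coanalytic}$ are incomparable, monotonicity does not apply directly, so I would instead use $\mathsf{G_\delta}\subseteq\mathsf{coanalytic}$ to obtain $(1)\to(3)$ and then compose with $(3)\to(2)$. Finally $(5)\to(6)$ follows from $\mathsf{open}\subseteq\mathsf{F_\sigma}$ (which gives $\Pf\Rightarrow\Po$) together with the equivalence $(4)\leftrightarrow(5)$.

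Next I would handle the conditional equivalence. Through the two ZFC-equivalences $(4)\leftrightarrow(5)$ and $(2)\leftrightarrow(3)$, the implication $(4)\to(3)$ for a fixed $X$ is literally ``$X$ has the $\Pc$ implies $X$ has the $\Pa$''. Hence ``$(4)\to(3)$ for every $X$'' is precisely the second bullet of Theorem \ref{main}, so it is equivalent to $\bbbb>\omega_1$. For the forward direction, Proposition \ref{bbig} transported through the equivalences gives $(4)\to(3)$ for all $X$ when $\bbbb>\omega_1$; for the converse, when $\bbbb=\omega_1$ the space of Proposition \ref{bsmall} has the $\Pc$ (hence $(4)$ and $(5)$) but not the $\Pa$ (hence, using $(3)\to(2)$, not $(3)$), so the implication fails.

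Lastly, the three $\ZFC$ counterexamples are read off directly: Proposition \ref{wo} gives a space with $(2)$ but not $(1)$, refuting $(2)\to(1)$; Proposition \ref{onedimbernstein} gives a space with $(6)$ but not $(5)$, refuting $(6)\to(5)$; and Proposition \ref{twodimbernstein} gives a space with $(7)$ but not $(6)$, refuting $(7)\to(6)$. I expect no real obstacle here, since every hard fact has been proved already; the only genuinely subtle point is the indirect passage for $(1)\to(2)$ through $\mathsf{G_\delta}$ sets forced by the incomparability of the analytic and coanalytic pointclasses, with the rest being careful bookkeeping over monotonicity and citation of Corollary \ref{aneqgd}, Theorem \ref{main}, and Propositions \ref{wo}, \ref{onedimbernstein}, and \ref{twodimbernstein}.
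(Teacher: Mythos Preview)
Your proposal is correct and follows essentially the same route as the paper's proof: both assemble the chain of implications from monotonicity together with Corollary \ref{aneqgd} (for $(3)\to(2)$ and hence the indirect passage $(1)\to(3)\to(2)$), the uncountable cofinality of $\omega_1$ (for $(5)\to(4)$), Theorem \ref{main} (for the status of $(4)\to(3)$), and Propositions \ref{wo}, \ref{onedimbernstein}, \ref{twodimbernstein} (for the three $\ZFC$ counterexamples). Your write-up is simply more explicit about the elementary pointclass containments and about why the step $(1)\to(2)$ must be routed through $(3)$, points the paper leaves implicit.
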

\begin{proof}
The implications $(\ref{Gdelta})\rightarrow (\ref{closed})$
and $(\ref{Fsigma})\rightarrow (\ref{open})\rightarrow (\ref{clopen})$ are trivial. The equivalence $(\ref{analytic})\leftrightarrow (\ref{Gdelta})$ follows from Corollary \ref{aneqgd}, and it shows that the implication $(\ref{coanalytic})\rightarrow(\ref{analytic})$ holds. The equivalence $(\ref{Fsigma})\leftrightarrow (\ref{closed})$ holds because $\omega_1$ has uncountable cofinality. This concludes the proof of the first statement. Now it is easy to realize that the second statement follows from Theorem \ref{main}. Finally, the third statement is the content of Proposition \ref{wo}, Proposition \ref{onedimbernstein}, and Proposition \ref{twodimbernstein}.
\end{proof}

\section{Generalizing the Grinzing property}

In this section, we will define a natural generalization of the Grinzing
property and extend some of the results from Section 2. While Question
\ref{grinzingq} below seems to be of independent interest, it will turn out to
be relevant in the next section as well (see the remark preceding Question
\ref{bsmallgen}).
\begin{definition}
Fix cardinals $\kappa,\lambda$ such that $\omega_1\leq\kappa\leq\cccc$ and
$\lambda\leq\kappa$. We will say that a subset $W$ of $2^\omega$ has the
\emph{$(\kappa,\lambda)$-Grinzing property} (briefly, the $\klGP$)
if $|W|\geq\kappa$ and for every $Y\subseteq W$ such that $|Y|\geq\kappa$ there
exist subsets $Y_\alpha$ of $Y$ for $\alpha\in\lambda$ such that
$|Y_\alpha|\geq\kappa$ for each $\alpha$ and $\cl(Y_\alpha)\cap
\cl(Y_\beta)=\varnothing$ whenever $\alpha\neq\beta$, where the closure is taken
in $2^\omega$.
\end{definition}
\noindent Notice that a subset $W$ of $2^\omega$ such that $|W|\geq\kappa$ has the $\klGP$ if and only if every subset of $W$ of size $\kappa$ has the $\klGP$. Also notice that the $\klGP$ gets stronger as $\lambda$ gets bigger. Furthermore, it is clear that the $(\omega_1,\omega_1)\text{-}\GP$ is simply the $\GP$.

Using the well-known Lemma \ref{findcrowded}, it is easy to show that $2^\omega$
has the $(\kappa,\omega)\text{-}\GP$ for every cardinal $\kappa\leq\cccc$ of
uncountable cofinality. The following proposition shows that the restriction on
the cofinality is really necessary.
\begin{proposition}\label{nogrinzing}
Let $\kappa$ be a cardinal of countable cofinality such that
$\omega_1<\kappa<\cccc$. Then no subset of $2^\omega$ has the
$(\kappa,2)\text{-}\GP$.
\end{proposition}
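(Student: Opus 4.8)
The plan is to prove the contrapositive by producing, inside an arbitrary $W\subseteq 2^\omega$ with $|W|\geq\kappa$, a single ``bad'' subset $Y$ witnessing the failure of the $(\kappa,2)\text{-}\GP$. The key idea is to find a point $p\in 2^\omega$ and a set $Y\subseteq W$ with $|Y|=\kappa$ that $\kappa$-converges to $p$, meaning $|Y\setminus U|<\kappa$ for every neighborhood $U$ of $p$. Granting this, the proof concludes quickly: if $Y_0,Y_1\subseteq Y$ both have size $\geq\kappa$, then for every neighborhood $U$ of $p$ the set $Y_\alpha\setminus(Y\setminus U)$ is non-empty, because $|Y\setminus U|<\kappa\leq|Y_\alpha|$; hence each $Y_\alpha$ meets every neighborhood of $p$, so that $p\in\cl(Y_0)\cap\cl(Y_1)$ and the two closures cannot be disjoint. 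Thus $Y$ admits no splitting of the required form, and $W$ fails to have the $(\kappa,2)\text{-}\GP$.

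To construct $p$ and $Y$, I would first isolate a ``heavy branch''. Call $s\in 2^{<\omega}$ heavy if $|W\cap[s]|\geq\kappa$. Since $\kappa$ is infinite and $|W\cap[s]|=|W\cap[s^{\frown}0]|+|W\cap[s^{\frown}1]|$, every heavy $s$ has a heavy immediate successor, so the heavy nodes form a subtree of $2^{<\omega}$ with an infinite branch $p$; thus $|W\cap[p\upharpoonright n]|\geq\kappa$ for all $n$. Next, for each $n$ let $E_n=\{y\in W: y\upharpoonright n=p\upharpoonright n\textrm{ and }y(n)\neq p(n)\}$ be the points of $W$ that first leave $p$ at level $n$. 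Since $W\cap[p\upharpoonright n]$ is the union of $\{p\}\cap W$ and the pairwise disjoint sets $E_m$ for $m\geq n$, heaviness gives $\sum_{m\geq n}|E_m|\geq\kappa$, and as this is a countable sum of cardinals it equals $\sup_{m\geq n}|E_m|$; hence $\sup_{m\geq n}|E_m|\geq\kappa$ for every $n$. Now, using that $\kappa$ has countable cofinality, fix cardinals $\kappa_k<\kappa$ with supremum $\kappa$, and recursively choose indices $m_0<m_1<\cdots$ with $|E_{m_k}|\geq\kappa_k$ (possible since each tail supremum is $\geq\kappa>\kappa_k$) together with $Y_k\subseteq E_{m_k}$ of size $\kappa_k$. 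Setting $Y=\bigcup_k Y_k$ gives $|Y|=\sup_k\kappa_k=\kappa$, while for each $N$ the set $Y\setminus[p\upharpoonright N]$ is the union of those finitely many $Y_k$ with $m_k<N$, so it has size $<\kappa$. This is exactly the $\kappa$-convergence required.

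The step I expect to be the main obstacle, and the one where the hypothesis on $\kappa$ is indispensable, is this final extraction: the whole argument hinges on writing $\kappa$ as the supremum of a countable increasing sequence of smaller cardinals, which is possible precisely because $\kappa$ has countable cofinality. The complementary positive result for cardinals of uncountable cofinality (mentioned just before this proposition) shows that this is not an artifact of the proof. The other delicate point is guaranteeing that the tail sums $\sum_{m\geq n}|E_m|$ never drop below $\kappa$; this is secured by passing to the heavy branch $p$ rather than to an arbitrary point, and it is what allows the recursion to keep finding levels $E_{m_k}$ large enough to contribute $\kappa_k$ new points.
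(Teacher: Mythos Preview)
Your argument is correct, and the overall strategy coincides with the paper's: produce inside $W$ a set $Y$ of size $\kappa$ that ``$\kappa$-converges'' to a single point, so that any two $\kappa$-sized subsets share that point in their closures. The execution, however, is genuinely different. The paper first invokes Lemma~\ref{findcrowded} to extract, for each $n$, a $\kappa_n$-crowded piece of $W$ (this is why the paper insists the $\kappa_n$ have uncountable cofinality), shrinks these pieces to have diameters tending to $0$, and then uses sequential compactness of $2^\omega$ to locate the limit point \emph{a posteriori}. You instead locate the limit point \emph{first}, as the heavy branch $p$, and then harvest the pieces $Y_k\subseteq E_{m_k}$ directly from the levels where $W$ splits off from $p$. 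Your route is more elementary: it needs no metric, no compactness step, no auxiliary lemma about crowded subspaces, and no cofinality restriction on the $\kappa_k$. The paper's route, on the other hand, is phrased in a way that would transfer verbatim to any compact metrizable ambient space, whereas your tree argument is tied to the basis $\{[s]:s\in 2^{<\omega}\}$ of $2^\omega$ (though of course any compact metrizable space embeds there). One small remark: your claim that a countable sum of cardinals equals its supremum is literally false in general, but it is harmless here because the sum is $\geq\kappa>\omega$, which forces the supremum to be infinite and hence equal to the sum.
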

\begin{proof}
Fix cardinals of uncountable cofinality $\kappa_n<\kappa$ for $n\in\omega$ such
that $\kappa_n\to\kappa$. Let $W$ be a subset of $2^\omega$ such that
$|W|=\kappa$. Fix a compatible metric on $2^\omega$. Using Lemma
\ref{findcrowded}, it is possible to obtain subsets $Y_n$ of $W$ for
$n\in\omega$ such that $|Y_n|=\kappa_n$ and $\diam(Y_n)\to 0$ as $n\to\infty$.
Pick $z_n\in Y_n$ for each $n$. By compactness, there exists $z\in 2^\omega$ and
a subsequence of $\langle z_n:n\in\omega\rangle$ that converges to $z$. It is
easy to check that the corresponding subsequence of $\langle
Y_n:n\in\omega\rangle$ converges to $z$. Assume without loss
of generality that $\langle Y_n:n\in\omega\rangle$ converges to $z$. This implies $z\in\cl(Z)$ for every
$Z\subseteq Y=\bigcup_{n\in\omega}Y_n$ of size $\kappa$, where the closure is
taken in $2^\omega$. In particular, $Y$ witnesses that $W$ does not have the
$(\kappa,2)\text{-}\GP$.
\end{proof}
\begin{lemma}\label{findcrowded}
Let $\kappa$ be a cardinal of uncountable cofinality. Then every space of size
at least $\kappa$ has a $\kappa$-crowded subspace.
\end{lemma}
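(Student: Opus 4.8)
The plan is to perform a Cantor--Bendixson style removal of the ``small'' points, exploiting the fact that a separable metrizable space is second countable together with the hypothesis on the cofinality of $\kappa$. Let $X$ be a space with $|X|\geq\kappa$, and fix a countable base $\{B_n:n\in\omega\}$ for $X$. First I would isolate the small basic open sets by defining
$$
D=\bigcup\{B_n:|B_n|<\kappa\},
$$
the union of all basic open sets of size less than $\kappa$. The candidate subspace will be $Y=X\setminus D$.

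The key step, and the only place where the hypothesis is used in an essential way, is to show that $|D|<\kappa$. Since $D$ is a union of countably many sets each of size less than $\kappa$, and $\kappa$ has uncountable cofinality, no countable family of cardinals below $\kappa$ can be cofinal in $\kappa$; hence their supremum $\mu$ satisfies $\mu<\kappa$, and therefore $|D|\leq\omega\cdot\mu<\kappa$. This is precisely where uncountable cofinality is indispensable (and it is exactly what fails for the cardinals treated in Proposition \ref{nogrinzing}). Consequently $|Y|\geq\kappa$, since removing fewer than $\kappa$ points from a set of size at least $\kappa$ leaves a set of size at least $\kappa$; in particular $Y$ is non-empty.

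It then remains to check that $Y$ is $\kappa$-crowded. Let $V$ be a non-empty relatively open subset of $Y$, say $V=U\cap Y$ with $U$ open in $X$, and pick $y\in V$. Since $U$ is open and $y\in U$, there is a basic open set $B_n$ with $y\in B_n\subseteq U$. Because $y\in Y$ means $y\notin D$, the set $B_n$ cannot be one of the small basic open sets, so $|B_n|\geq\kappa$. Removing the set $D$ of size less than $\kappa$ from $B_n$ again leaves a set of size at least $\kappa$, so $|B_n\cap Y|\geq\kappa$; and $B_n\cap Y\subseteq U\cap Y=V$ gives $|V|\geq\kappa$, as required.

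I do not expect any serious obstacle: the argument is entirely routine once the correct set $D$ of points to discard is identified, and the cardinal-arithmetic computation of the previous paragraph is the only step requiring care. The implicit use of second countability (to reduce to a countable family of basic open sets) is what makes the cofinality hypothesis suffice.
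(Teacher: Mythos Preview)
Your proof is correct and follows essentially the same Cantor--Bendixson approach as the paper. The only cosmetic difference is that the paper takes the union of \emph{all} open subsets of size less than $\kappa$ and then extracts a countable subcover via Lindel\"ofness, whereas you work directly with a fixed countable base; the set removed and the resulting subspace $Y$ are literally the same in both arguments.
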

\begin{proof}
Let $X$ be a space of size at least $\kappa$. Define
$$
\UU=\{U\subseteq X: U\text{ is open in }X\text{ and }|U|<\kappa\}.
$$
and $V=\bigcup\UU$. It is obvious that $\UU$ is a cover of $V$. Let $\VV$ be a
countable subcover of $\UU$. Notice that $|V|<\kappa$ because $V=\bigcup\VV$ and
$\kappa$ has uncountable cofinality. So $Y=X\setminus V$ is non-empty.
Furthermore, it is clear that every non-empty open subset of $Y$ has size at
least $\kappa$.
\end{proof}

The following fundamental question is open. Notice that Corollary
\ref{existsgrinzing} gives a positive answer in the case
$(\kappa,\lambda)=(\omega_1,\omega_1)$.
\begin{question}\label{grinzingq}
For which cardinals $\kappa,\lambda$ such that
$\omega_1\leq\lambda\leq\kappa\leq\cccc$ and $\kappa$ has uncountable cofinality
is it possible to prove in $\ZFC$ that there exists a subset of $2^\omega$ with
the $\klGP$?
\end{question}

The following result (see \cite[Section 4]{millerm}), together with
Proposition \ref{gpch}, shows that the statement ``$2^\omega$ has the
$(\cccc,\cccc)\text{-}\GP$'' is independent of $\ZFC$.
\begin{theorem}[Miller]
It is consistent that for every $Y\subseteq 2^\omega$ of size $\cccc$ there
exists a continuous map $f:2^\omega\longrightarrow 2^\omega$ such that
$f[Y]=2^\omega$.
\end{theorem}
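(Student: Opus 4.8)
The plan is to translate the conclusion into the problem of building a single continuous map by finite approximations, and then to iterate such constructions. First I would record the combinatorial meaning of ``$f[Y]=2^\omega$'' for a continuous $f\colon 2^\omega\to 2^\omega$. Coding $f$ by a monotone labelling $\rho\colon 2^{<\omega}\to 2^{<\omega}$ (so that $\rho(s)\subseteq\rho(t)$ whenever $s\subseteq t$ and $|\rho(x\restriction n)|\to\infty$ for each $x$, with $f(x)=\bigcup_n\rho(x\restriction n)$), the requirement $f[Y]=2^\omega$ becomes: for every $z\in 2^\omega$ there is a point $x\in Y$ that $\rho$ routes along $z$, meaning $\rho(x\restriction n)\subseteq z$ for cofinally many $n$. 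Equivalently, writing $U_s=\{x\in Y:s\subseteq f(x)\}$, one needs a scheme of relatively clopen subsets of $Y$ with $U_\varnothing=Y$, $U_s=U_{s^\frown 0}\sqcup U_{s^\frown 1}$, and $\bigcap_n U_{z\restriction n}\neq\varnothing$ for every branch $z$. So the task reduces to constructing such a scheme for an arbitrary target set $Y$ of size $\cccc$.

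Second, for a fixed target $Y$ I would force the scheme with a finite-approximation poset. Since $\cccc$ has uncountable cofinality, Lemma \ref{findcrowded} lets me pass to a $\cccc$-crowded subset of $Y$, so I may assume every non-empty relatively open subset of $Y$ has size $\cccc$. A condition would consist of a finite monotone labelling $\rho_p$ on a finite subtree of $2^{<\omega}$ together with a finite set of \emph{anchors}: pairs $\langle x,s\rangle$ with $x\in Y$ recording a commitment to route $x$ into the cone of $s$, all data being mutually consistent. The ordering refines $\rho_p$, extends the cones, and preserves existing anchors. Crowdedness makes the relevant dense sets work: given any target cone and any level, a fresh point of $Y$ can be found in a non-empty relatively clopen piece (necessarily of size $\cccc$) and routed one step deeper. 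Two conditions that share the same finite labelling and the same anchor pattern are compatible, so a standard argument gives that the poset is ccc (indeed $\sigma$-centered), and the generic labelling yields the desired $f$ with $f[Y]=2^\omega$.

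Finally I would iterate these posets with finite support over a model of $\mathsf{GCH}$, arranging $\cccc=\omega_2$, so that in the extension every set of size $\cccc$ is dealt with. The hard part will be the bookkeeping, and this is where I expect the real difficulty to lie. A set $Y$ of size $\aleph_2$ in the final model need not appear in any proper initial extension, since its points are added cofinally; one therefore cannot simply enumerate all targets and treat each at a single stage. Moreover a continuous image of a set of size $\le\aleph_1$ has size $\le\aleph_1<\cccc$, so there is no cheap reduction to smaller targets. The plan here is to work with \emph{names} for $\cccc$-crowded size-$\aleph_2$ sets, together with a reflection argument guaranteeing that for each such name the driving-toward-$z$ density requirements are met cofinally and coherently enough to produce, for every $z$ in the final model (including reals added late), an \emph{actual} preimage in $Y$ rather than a merely dense image. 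Securing this exact surjectivity — one persistent anchor per branch, from finite commitments, uniformly over all size-$\cccc$ targets — is the crux, and it is precisely the point at which the $\klGP$ with $(\kappa,\lambda)=(\cccc,\cccc)$ separates from the $\CH$ situation of Proposition \ref{gpch}.
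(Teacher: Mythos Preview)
The paper does not prove this theorem at all: it is quoted verbatim from \cite[Section~4]{millerm} and used only as a black box to obtain Corollary~\ref{consistentgrinzing}. There is thus no in-paper argument for you to be compared against; any real comparison would have to be with Miller's original article.

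On the merits of your sketch: the gap you yourself flag in the last paragraph is genuine and is not a mere bookkeeping detail. Over a ground model of $\mathsf{GCH}$, at every proper stage $\alpha<\omega_2$ of a finite-support ccc iteration one has $\cccc^{V[G_\alpha]}=\omega_1$, so no size-$\aleph_2$ subset of $2^\omega$ lives in any $V[G_\alpha]$; every eventual target $Y$ has its points added cofinally. Worse, your single-stage poset adds reals (the generic labelling $\rho$ codes one), so even if you could capture a name for $Y$ and force at stage $\alpha$, the density requirements you meet only guarantee $f_\alpha[Y]\supseteq(2^\omega)^{V[G_\alpha]}$; reals born after stage $\alpha$ have no reason to lie in $f_\alpha[Y]$. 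Revisiting $Y$ at later stages yields \emph{new} maps, not extensions of $f_\alpha$, and the theorem asks for a single continuous $f$. The phrase ``reflection argument'' names the difficulty without resolving it: what is actually needed is an argument carried out \emph{in the final model}, showing that an arbitrary $Y$ of size $\cccc$ there already possesses enough structure to admit the desired map --- not a stage-by-stage capture along the iteration. That structural argument in the extension is precisely the content of Miller's paper, and it is the idea your outline is missing.
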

\begin{corollary}\label{consistentgrinzing}
It is consistent that $2^\omega$ has the $(\cccc,\cccc)\text{-}\GP$.
\end{corollary}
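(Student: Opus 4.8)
The plan is to work inside a model of set theory in which the theorem of Miller quoted just above holds, and to show that in any such model $2^\omega$ has the $(\cccc,\cccc)\text{-}\GP$. The one ingredient needed beyond Miller's theorem is a supply of $\cccc$ many pairwise disjoint closed sets in the \emph{target} copy of $2^\omega$, each of full size $\cccc$; the idea is then to pull these back along the continuous surjections that Miller's theorem provides.

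First I would fix a homeomorphism $2^\omega\approx 2^\omega\times 2^\omega$ and, for each $z\in 2^\omega$, consider the vertical fiber $C_z=\{z\}\times 2^\omega$. This produces a family $\{C_z:z\in 2^\omega\}$ consisting of $\cccc$ many pairwise disjoint closed subsets of $2^\omega$, each homeomorphic to $2^\omega$ and hence of size $\cccc$. Next, to verify the $(\cccc,\cccc)\text{-}\GP$ for $W=2^\omega$, I would fix an arbitrary $Y\subseteq 2^\omega$ with $|Y|\geq\cccc$; since $Y\subseteq 2^\omega$ this forces $|Y|=\cccc$, which is exactly the hypothesis of Miller's theorem. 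Applying that theorem yields a continuous map $f:2^\omega\longrightarrow 2^\omega$ with $f[Y]=2^\omega$.

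Finally, for each $z\in 2^\omega$ set $Y_z=Y\cap f^{-1}[C_z]$. Since $f$ is continuous, $f^{-1}[C_z]$ is closed, so $\cl(Y_z)\subseteq f^{-1}[C_z]$; as the sets $C_z$ are pairwise disjoint, so are the sets $f^{-1}[C_z]$, and therefore $\cl(Y_z)\cap\cl(Y_{z'})=\varnothing$ whenever $z\neq z'$. Moreover, using $f[Y]=2^\omega$ one computes $f[Y_z]=f[Y]\cap C_z=C_z$, so that $|Y_z|\geq|C_z|=\cccc$. Indexing the family $\{Y_z:z\in 2^\omega\}$ by the $\cccc$ points $z\in 2^\omega$ thus exhibits $\cccc$ many subsets of $Y$, each of size $\cccc$ and with pairwise disjoint closures, which is precisely what the $(\cccc,\cccc)\text{-}\GP$ demands.

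I do not expect a serious obstacle here: once Miller's theorem is in hand, the only thing one must notice is that a partition of the target $2^\omega$ into continuum-many disjoint perfect sets pulls back along any continuous surjection \emph{from $Y$} to continuum-many subsets of $Y$ with pairwise disjoint closures, and that surjectivity onto a set of size $\cccc$ forces each piece to have size $\cccc$. The mild point worth double-checking is the size computation $f[Y_z]=f[Y]\cap C_z$, which is the standard identity $f[Y\cap f^{-1}[C]]=f[Y]\cap C$; everything else is immediate from continuity and the choice of the fibers $C_z$.
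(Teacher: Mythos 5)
Your proof is correct and is essentially the paper's own argument: the paper's entire proof is the one-line hint ``Use the fact that $2^\omega\approx 2^\omega\times 2^\omega$,'' and your pullback of the vertical fibers $\{z\}\times 2^\omega$ along the continuous surjection from Miller's theorem is exactly the intended expansion of that hint. All the details you check (disjointness of closures via preimages of closed sets, and the identity $f[Y\cap f^{-1}[C]]=f[Y]\cap C$ giving each piece size $\cccc$) are sound.
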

\begin{proof}
Use the fact that $2^\omega\approx 2^\omega\times 2^\omega$.
\end{proof}

On the other hand, the proofs of Proposition \ref{gpch} and Theorem \ref{gpma}
can easily be
adapted to obtain the following results.
\begin{proposition}\label{gpchgen}
Assume $\bbbb=\kappa$. Then $2^\omega$ does not have the
$(\kappa,\omega_1)\text{-}\GP$.
\end{proposition}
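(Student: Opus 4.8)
The plan is to adapt the first proof of Proposition \ref{gpch}, replacing the threshold $\omega_1$ by $\kappa=\bbbb$ throughout, while keeping $\omega_1$ as the number of pieces to be contradicted. The entire argument rests on the fact that the countable set $T_\infty$ cannot carry $\omega_1$ many pairwise disjoint nonempty subsets, so the only genuinely new work is producing a suitable witness of size $\kappa$.

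First I would construct the witness. Since $\bbbb=\kappa$, I would build by transfinite recursion a $<^\ast$-increasing unbounded family $B=\{f_\alpha:\alpha<\kappa\}\subseteq\omega^\omega$ consisting of increasing functions: at stage $\alpha<\kappa$ the set $\{f_\beta:\beta<\alpha\}$ has size $<\kappa=\bbbb$ and is therefore bounded, so I may choose an increasing $f_\alpha$ that $<^\ast$-dominates all previous functions as well as the $\alpha$-th member of some fixed unbounded family of size $\kappa$; the latter choice forces $B$ itself to be unbounded. Viewing $B\subseteq\omega^\omega\subseteq(\omega+1)^\omega=T\approx 2^\omega$, I claim that $B$ witnesses that $T$, and hence $2^\omega$ (a homeomorphism of $2^\omega$ preserves both cardinalities and closures), fails to have the $(\kappa,\omega_1)\text{-}\GP$.

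The key intermediate step, to be recorded next, is that every subset of $B$ of size $\kappa$ is unbounded. Indeed, such a subset has an index set of size $\kappa$, and any subset of the cardinal $\kappa$ of full size $\kappa$ is cofinal in $\kappa$ (if it were bounded by some $\beta<\kappa$ it would have size $\le|\beta|<\kappa$); a cofinal subfamily of a $<^\ast$-increasing unbounded sequence is again unbounded. Note that this observation uses only that $\kappa$ is a cardinal, so no appeal to the regularity of $\bbbb$ is needed.

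Finally I would derive the contradiction exactly as in Proposition \ref{gpch}. Suppose, toward a contradiction, that $\langle Y_\alpha:\alpha\in\omega_1\rangle$ are subsets of $Y=B$, each of size $\geq\kappa$, with $\cl(Y_\alpha)\cap\cl(Y_\beta)=\varnothing$ for $\alpha\neq\beta$, closures taken in $T$. Each $Y_\alpha$ is then unbounded by the previous step, so Lemma \ref{overspill} yields a point of $\cl(Y_\alpha)$ outside $\omega^\omega$, and Lemma \ref{closureincreasing} (applicable since $Y_\alpha$ consists of increasing functions) places that point in $T_\infty=\{s^{\frown}\langle\omega,\omega,\ldots\rangle:s\in\omega^{<\omega}\}$. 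Thus $\cl(Y_\alpha)\cap T_\infty\neq\varnothing$ for every $\alpha\in\omega_1$, and these intersections are pairwise disjoint, giving $\omega_1$ many pairwise disjoint nonempty subsets of the countable set $T_\infty$, which is absurd. The main (and essentially only) obstacle beyond \ref{gpch} is the construction of a $<^\ast$-increasing unbounded family of length exactly $\kappa$ together with the verification that its size-$\kappa$ subfamilies remain unbounded; once these are in hand, the remainder is a verbatim transcription of the $\omega_1$ case, with the countability of $T_\infty$ still doing all the work.
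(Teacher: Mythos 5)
Your proposal is correct and is precisely the adaptation of the first proof of Proposition \ref{gpch} that the paper has in mind (the paper gives no separate proof of Proposition \ref{gpchgen}, asserting only that the earlier argument ``can easily be adapted''). You correctly supply the two new ingredients that the adaptation actually requires --- a $<^\ast$-increasing unbounded family of length exactly $\kappa=\bbbb$, and the observation that its size-$\kappa$ subfamilies are cofinal and hence unbounded --- after which Lemmas \ref{overspill} and \ref{closureincreasing} and the countability of $T_\infty$ finish the argument exactly as in the $\omega_1$ case.
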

\begin{theorem}\label{gpmagen}
Assume $\MA$. Then $2^\omega$ has the $(\kappa,\kappa)\text{-}\GP$ for every
cardinal $\kappa<\cccc$ of uncountable cofinality.
\end{theorem}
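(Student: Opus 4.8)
The final statement to prove is Theorem \ref{gpmagen}: assuming $\MA$, the space $2^\omega$ has the $(\kappa,\kappa)\text{-}\GP$ for every cardinal $\kappa<\cccc$ of uncountable cofinality. The plan is to mirror the structure of the proof of Theorem \ref{gpma} as closely as possible, generalizing $\omega_1$ to $\kappa$ throughout, and to exploit the fact that under $\MA$ the relevant forcing meets families of dense sets of size less than $\cccc$. First I would fix $Y\subseteq 2^\omega$ with $|Y|=\kappa$ and enumerate it injectively as $Y=\{y_\xi:\xi\in\kappa\}$. Then I would split $Y$ into pairwise disjoint pieces $Z_\alpha$ for $\alpha\in\kappa$, each of size $\kappa$, and invoke Lemma \ref{findcrowded} to arrange that each $Z_\alpha$ is $\kappa$-crowded; here is where the uncountable cofinality of $\kappa$ is used, exactly as in the original argument.

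Next I would define the poset $\PPP$ verbatim as in Theorem \ref{gpma}, with conditions $p=\langle n^p,F^p\rangle$ where $F^p=\langle F_\alpha^p:\alpha\in\kappa\rangle$ now indexed by $\kappa$ rather than $\omega_1$, each $F_\alpha^p$ a finite subset of $Z_\alpha$ with finite support, subject to the same separation condition (\ref{separateFs}), and with the same ordering. The dense sets $D_{\delta,\eta}=\{p\in\PPP:y_\xi\in F_\delta^p\text{ for some }\xi\geq\eta\}$ are now indexed by $\delta,\eta\in\kappa$, so the family $\DD=\{D_{\delta,\eta}:\delta,\eta\in\kappa\}$ has size $\kappa<\cccc$. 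The density argument is unchanged: given $\delta\in\supp(p)$, pick $x\in F_\delta^p$, use $\kappa$-crowdedness of $Z_\delta$ to find $y_\xi\in Z_\delta\cap[x\upharpoonright n^p]$ with $\xi\geq\eta$, and extend. The ccc proof is likewise identical, relying only on the Delta System Lemma and a pigeonhole on the finitely many initial segments, neither of which depends on the index set being $\omega_1$; so $\PPP$ is ccc.

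With $\PPP$ ccc and $|\DD|=\kappa<\cccc$, the hypothesis $\MA$ yields a $\DD$-generic filter $G$, and I would set $Y_\alpha=\bigcup\{F_\alpha^p:p\in G\}$ for $\alpha\in\kappa$. The sets $D_{\delta,\eta}$ force each $Y_\alpha$ to have size $\kappa$: for fixed $\delta$, genericity meets $D_{\delta,\eta}$ for every $\eta<\kappa$, producing elements $y_\xi\in Y_\delta$ with arbitrarily large index, and since the enumeration is injective this gives $|Y_\delta|=\kappa$. The disjoint-closures argument is then copied word for word from the original proof: if $w\in\cl(Y_\alpha)\cap\cl(Y_\beta)$ with $\alpha\neq\beta$, choose $p\in G$ with $\{\alpha,\beta\}\subseteq\supp(p)$, pick $w_\alpha\in Y_\alpha\cap[w\upharpoonright n^p]$ and $w_\beta\in Y_\beta\cap[w\upharpoonright n^p]$, trace them back to conditions below $p$, and use condition (6) to derive agreement of initial segments contradicting (\ref{separateFs}).

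The main point to watch, rather than a genuine obstacle, is the bookkeeping that confirms $|\DD|=\kappa<\cccc$ so that $\MA$ applies; since we only require $\kappa$-many pieces $Y_\alpha$ and $\kappa$-many dense sets, the cardinality arithmetic stays below $\cccc$ and the hypothesis $\kappa<\cccc$ is exactly what makes $\MA$ usable. The one place where the cofinality assumption genuinely bites is in securing $\kappa$-crowded pieces via Lemma \ref{findcrowded}, and in ensuring that meeting the $D_{\delta,\eta}$ for all $\eta<\kappa$ actually yields a set of full size $\kappa$ rather than something smaller; both are handled exactly as in the $\GP$ case. Because every ingredient generalizes without essential change, I expect the adaptation to be routine, which is presumably why the paper states that the proofs ``can easily be adapted.''
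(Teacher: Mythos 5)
Your adaptation is exactly the one the paper intends (the paper gives no separate proof of this theorem, saying only that the proof of Theorem \ref{gpma} ``can easily be adapted''), and every step you describe goes through verbatim when $\kappa$ is \emph{regular}. The gap sits at the one point you explicitly wave off as ``handled exactly as in the $\GP$ case'': meeting $D_{\delta,\eta}$ for every $\eta<\kappa$ only guarantees that the set of indices $\{\xi:y_\xi\in Y_\delta\}$ is cofinal in $\kappa$, and a cofinal subset of $\kappa$ has size at least $\mathrm{cf}(\kappa)$, not necessarily $\kappa$. In the original proof this is harmless because $\omega_1$ is regular, but the theorem is stated for every $\kappa<\cccc$ of uncountable \emph{cofinality}, which includes singular cardinals; and $\MA$ is consistent with, say, $\cccc=\aleph_{\omega_1+1}$, so a cardinal such as $\aleph_{\omega_1}$ genuinely occurs below $\cccc$. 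For such $\kappa$ your dense sets only yield $|Y_\delta|\geq\mathrm{cf}(\kappa)<\kappa$, so the argument as written proves the theorem only for regular $\kappa$.

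The repair is small but it is a real change to the dense sets, not mere bookkeeping. Using that each $Z_\delta$ is $\kappa$-crowded and has a countable base, partition $Z_\delta$ into $\kappa$ pairwise disjoint subsets $Z_{\delta,\iota}$ for $\iota<\kappa$, each dense in $Z_\delta$: well-order $\kappa\times\omega$ in type $\kappa$ and at each stage pick a fresh point of the $n$-th basic open set, which is possible because fewer than $\kappa$ points have been used so far while every non-empty open subset of $Z_\delta$ has size at least $\kappa$. Now replace $D_{\delta,\eta}$ by $D_{\delta,\iota}=\{p\in\PPP:F^p_\delta\cap Z_{\delta,\iota}\neq\varnothing\}$. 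Density is checked as before: if $\delta\in\supp(p)$ and $x\in F^p_\delta$, then $Z_\delta\cap[x\upharpoonright n^p]$ is a non-empty relatively open subset of $Z_\delta$, hence meets $Z_{\delta,\iota}$, and the chosen point can be added while preserving condition (6) and, after enlarging $n$, the separation condition. A filter meeting all $\kappa$ of these dense sets gives $Y_\delta\cap Z_{\delta,\iota}\neq\varnothing$ for every $\iota<\kappa$, hence $|Y_\delta|\geq\kappa$ outright, with no appeal to regularity. Everything else in your write-up (the partition into $\kappa$-crowded pieces via Lemma \ref{findcrowded}, the ccc argument via the Delta System Lemma, the application of $\MA$ to $\kappa<\cccc$ many dense sets, and the disjoint-closures argument) is correct as stated.
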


\section{Generalizing the perfect set property}

We begin by stating the following natural generalization of the $\PSP$.
\begin{definition}
Fix an uncountable cardinal $\kappa$. Let $X$ be a space and $\bG$ a pointclass.
We will say that $X$ has the \emph{$\kappa$-perfect set property for $\bG$
subsets}
(briefly, the $\kPSP$) if for every $\bG$ subset $A$ of $X$ either $|A|<\kappa$
or $A$ contains a copy of $2^\omega$.
\end{definition}
\noindent Notice that the $\kPSP$ gets stronger as $\kappa$ gets smaller and as
$\bG$
gets bigger. Furthermore, it is clear that the
$\omega_1\text{-}\PSP$ is simply the $\PSP$.

The following is the most general question that we can imagine on this subject.
\begin{question}\label{verygeneral}
What is the status of the statement ``For every space $X$, if $X$ has the
$\kPSP$ then $X$ has the $\kappa'\text{-}\mathsf{PSP(\bG')}$'' as
$\kappa,\kappa'$ range over all uncountable cardinals and $\bG,\bG'$ range over
all pointclasses?
\end{question}
\noindent Theorem \ref{completepicture} can be viewed as a partial answer to the
above question,
in the case where $\kappa=\kappa'=\omega_1$ and $\bG,\bG'$ are at most
$\mathsf{analytic}$. 
In the rest of this section, we will essentially point out further concrete
instances of Question \ref{verygeneral} which seem particularly interesting.

The following two results generalize Proposition \ref{bbig} and Corollary
\ref{aneqgd} from $\omega_1$ to an arbitrary uncountable cardinal $\kappa$.
Lemma \ref{countablecof} will allow us to handle the case in which $\kappa$ has
countable cofinality.
\begin{proposition}\label{bbiggen}
Assume $\bbbb >\kappa$, where $\kappa$ is an uncountable cardinal. Then the
$\kappa\text{-}\Pc$ implies the
$\kappa\text{-}\Pa$ for every space.
\end{proposition}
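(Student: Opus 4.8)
The plan is to mirror the proof of Proposition \ref{bbig}, isolating the single place where the uncountable cofinality of $\omega_1$ was used and replacing it by an appeal to Lemma \ref{countablecof} when $\kappa$ has countable cofinality.

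First I would assume without loss of generality that $X$ is a subspace of $T=[0,1]^\omega$, and fix an analytic subset $A$ of $T$ with $|A\cap X|\geq\kappa$; the goal is to produce a copy of $2^\omega$ inside $A\cap X$. Writing $A=\pi[C]$ for a closed set $C\subseteq T\times\omega^\omega$, where $\pi$ is the projection onto $T$, I would fix $S\subseteq A\cap X$ with $|S|=\kappa$ and, for each $x\in S$, choose $y_x\in\omega^\omega$ with $(x,y_x)\in C$; put $Z=\{y_x:x\in S\}$, so that $|Z|\leq\kappa<\bbbb$. Exactly as in Proposition \ref{bbig}, the bound $|Z|<\bbbb$ makes $Z$ a bounded family in $(\omega^\omega,<^\ast)$, so there are compact sets $K_n\subseteq\omega^\omega$, which we may take to be increasing, with $Z\subseteq\bigcup_{n\in\omega}K_n$. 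Setting $A_n=\pi[(T\times K_n)\cap C]\cap X$, each $A_n$ is a closed subset of $X$ contained in $A\cap X$ (since $T\times K_n$ is compact, the projection of the compact set $(T\times K_n)\cap C$ is compact, hence closed in $T$), the sequence $\langle A_n:n\in\omega\rangle$ is increasing, and $S\subseteq\bigcup_{n\in\omega}A_n$, whence $|\bigcup_{n\in\omega}A_n|\geq\kappa$.

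At this point the argument splits on the cofinality of $\kappa$. If $\kappa$ has uncountable cofinality, then a countable union of sets each of size $<\kappa$ has size $<\kappa$; since $|\bigcup_{n\in\omega}A_n|\geq\kappa$, some $A_n$ must have size at least $\kappa$, and then the $\kappa\text{-}\Pc$ applied to the closed set $A_n$ yields a copy of $2^\omega$ inside $A_n\subseteq A\cap X$, as desired. This is the verbatim generalization of Proposition \ref{bbig}, and it is routine.

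The hard part is the case $\mathsf{cf}(\kappa)=\omega$, and this is precisely what Lemma \ref{countablecof} is meant to handle. Here the reduction above can fail completely: since $\kappa<\bbbb\leq\cccc$, a countable union of closed sets each of size $<\kappa$ may have size $\kappa$ yet contain no copy of $2^\omega$, because by the Baire category theorem any copy of $2^\omega$ covered by countably many closed sets must meet one of them in a set with non-empty interior, hence of size $\cccc>\kappa$. Thus the $\kappa\text{-}\Pc$ cannot be applied to any single $A_n$, and one cannot merely pass to the closure $\cl(\bigcup_{n\in\omega}A_n)$ either, since the copy of $2^\omega$ that the $\kappa\text{-}\Pc$ would produce in that closed set need not lie inside the analytic set $A$ (the witnessing points $y_x$ may escape every compact set, so limit points of $\bigcup_n\pi[(T\times K_n)\cap C]$ can fall outside $A$). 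Lemma \ref{countablecof} is designed to circumvent exactly this obstruction, working with the increasing sequence $\langle A_n\rangle$ of closed subsets of $X$ together with the analytic set $A$ containing their union and extracting a copy of $2^\omega$ lying inside $A\cap X$; feeding it the sequence constructed above completes the proof. I expect essentially all of the genuine content of the proposition to be concentrated in this final step.
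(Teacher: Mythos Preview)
Your treatment of the uncountable cofinality case is correct and matches the paper's approach. The problem is your countable cofinality case: you have misread what Lemma \ref{countablecof} actually says. It does \emph{not} take as input a sequence of closed sets together with an analytic set and produce a copy of $2^\omega$; its statement is purely about cardinals and pointclasses. Concretely, Lemma \ref{countablecof} asserts that if $X$ has the $\kappa\text{-}\PSP$ for a pointclass $\bG$ (under a mild hypothesis on $\bG$, satisfied here since $2^\omega$ has the $\cccc\text{-}\Pc$ by Theorem \ref{suslin}) and $\kappa$ has countable cofinality, then there exists a cardinal $\kappa'<\kappa$ of uncountable cofinality such that $X$ already has the $\kappa'\text{-}\PSP$. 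There is no sequence $\langle A_n\rangle$ to feed in, and the lemma does not mention any analytic set.

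Accordingly, in the countable cofinality case you should not begin by fixing an analytic $A$ and building the $A_n$ at all. The paper's argument is a one-line reduction: apply Lemma \ref{countablecof} with $\bG=\mathsf{closed}$ to obtain $\kappa'<\kappa$ of uncountable cofinality with $X$ having the $\kappa'\text{-}\Pc$; since $\bbbb>\kappa>\kappa'$, the uncountable cofinality case you already proved yields the $\kappa'\text{-}\Pa$ for $X$, which trivially implies the $\kappa\text{-}\Pa$. Thus the ``hard part'' you anticipated is in fact absorbed entirely into the previously handled case, and your detailed discussion of why the $A_n$ construction fails when $\mathsf{cf}(\kappa)=\omega$, while accurate, is beside the point.
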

\begin{proof}
If $\kappa$ has uncountable cofinality, the desired result follows from a
straightforward adaptation of the proof of Proposition \ref{bbig}. So assume
that $\kappa$ has countable cofinality, and let $X$ be a space with the
$\kappa\text{-}\Pc$. By Lemma \ref{countablecof}, there exists
a cardinal $\kappa'<\kappa$ of uncountable cofinality such that $X$ has the
$\kappa'\text{-}\Pc$. Since $\bbbb>\kappa'$, it follows that
$X$ has the $\kappa'\text{-}\Pa$, which obviously implies the
$\kappa\text{-}\Pa$.
\end{proof}

\begin{proposition}\label{soleckigen}
Let $\kappa$ be an uncountable cardinal. Then the
$\kappa\text{-}\Pg$ implies the
$\kappa\text{-}\Pa$ for every space.
\end{proposition}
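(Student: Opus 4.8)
The plan is to mimic the proof of Corollary \ref{aneqgd} with $\kappa$ in place of $\omega_1$, and to use Lemma \ref{countablecof} to reduce the case of countable cofinality to the case of uncountable cofinality.

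First I would treat the case in which $\kappa$ has uncountable cofinality. Let $X$ be a space with the $\kappa\text{-}\Pg$; as in Corollary \ref{aneqgd}, assume without loss of generality that $X$ is a subspace of $T=[0,1]^\omega$, and set
$$
\II=\{K\subseteq T: K\textrm{ is closed in }T\textrm{ and }|X\cap K|<\kappa\}.
$$
Given an analytic $A\subseteq T$ with $|A\cap X|\geq\kappa$, I would apply Theorem \ref{checkGdeltas} to $\II$ and $A$. Condition $(\ref{Asmall})$ cannot hold: if $A\subseteq\bigcup\JJ$ for some countable $\JJ\subseteq\II$, then $A\cap X$ would be a union of countably many sets of size $<\kappa$, hence (since $\kappa$ has uncountable cofinality) of size $<\kappa$, contradicting $|A\cap X|\geq\kappa$. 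So condition $(\ref{Gbig})$ must furnish a $\mathsf{G_\delta}$ set $G\subseteq A$ with $G\nsubseteq\bigcup\JJ$ for every countable $\JJ\subseteq\II$. Exactly as in Corollary \ref{aneqgd}, this forces $|G\cap X|\geq\kappa$. Since $G\cap X$ is then a $\mathsf{G_\delta}$ subset of $X$ of size at least $\kappa$, the $\kappa\text{-}\Pg$ yields a copy of $2^\omega$ inside $G\cap X\subseteq A\cap X$, as desired.

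Next I would handle the case in which $\kappa$ has countable cofinality. Given $X$ with the $\kappa\text{-}\Pg$, I would invoke Lemma \ref{countablecof} (as in the proof of Proposition \ref{bbiggen}, but applied to the pointclass $\mathsf{G_\delta}$) to obtain a cardinal $\kappa'<\kappa$ of uncountable cofinality such that $X$ has the $\kappa'\text{-}\Pg$. By the previous paragraph $X$ then has the $\kappa'\text{-}\Pa$, which obviously implies the $\kappa\text{-}\Pa$ because $\kappa'<\kappa$.

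The hard part will be the assertion that $|G\cap X|\geq\kappa$, which is the only place where uncountable cofinality really enters the first case and which is handled precisely as the corresponding claim (``$G\cap X$ is uncountable'') in Corollary \ref{aneqgd}: one argues that if $|G\cap X|<\kappa$ then the $\mathsf{G_\delta}$ set $G$ lies in the $\sigma$-ideal generated by $\II$, contradicting $(\ref{Gbig})$. A minor point to confirm is that Lemma \ref{countablecof} is available for $\mathsf{G_\delta}$ (and not only for $\mathsf{closed}$); granting this, nothing else in the argument goes beyond Corollary \ref{aneqgd} and Proposition \ref{bbiggen}.
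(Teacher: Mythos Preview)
Your proposal is correct and follows the paper's proof essentially verbatim: the paper's argument for Proposition~\ref{soleckigen} is precisely ``adapt the proof of Corollary~\ref{aneqgd} when $\kappa$ has uncountable cofinality, and otherwise apply Lemma~\ref{countablecof} as in the proof of Proposition~\ref{bbiggen},'' which is exactly what you do. Your minor point about Lemma~\ref{countablecof} being available for $\mathsf{G_\delta}$ is also fine, since $2^\omega$ has the $\cccc\text{-}\Pg$ and the lemma's proof explicitly covers the case $\bG=\mathbf{\Pi}^0_\xi$ for successor $\xi$.
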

\begin{proof}
If $\kappa$ has uncountable cofinality, the desired result follows from a
straightforward adaptation of the proof of Corollary \ref{aneqgd}. Otherwise, apply Lemma \ref{countablecof} as in the proof of Proposition \ref{bbiggen}.
\end{proof}

\begin{lemma}\label{countablecof}
Assume that $\bG$ is a pointclass such that $2^\omega$ has the
$\cccc\text{-}\PSP$. Let $\kappa$ be an uncountable cardinal of
countable cofinality. Assume that $X$ is a space with the
$\kappa\text{-}\PSP$. Then there exists a cardinal $\kappa'<\kappa$
of uncountable cofinality such that $X$ has the
$\kappa'\text{-}\PSP$.
\end{lemma}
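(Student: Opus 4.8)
The plan is to argue by contradiction and to manufacture, from the failure of the conclusion, a single $\bG$ subset of $X$ of size at least $\kappa$ that is too small to contain a copy of $2^\omega$; since the $\kPSP$ would force such a set to contain one, this is absurd. Everything hinges on one arithmetic remark: by K\"onig's theorem $\mathsf{cf}(\cccc)>\omega$, whereas $\mathsf{cf}(\kappa)=\omega$, so $\kappa\neq\cccc$. If $\kappa>\cccc$ the lemma is trivial, since then $\kappa'=\cccc^+$ has uncountable cofinality, satisfies $\kappa'<\kappa$, and $X$ has the $\kappa'\text{-}\PSP$ vacuously because $|X|\leq\cccc<\kappa'$. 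Hence I may assume $\kappa\leq\cccc$, and therefore $\kappa<\cccc$. The upshot is that every subset of $X$ of size at most $\kappa$ is automatically free of copies of $2^\omega$.

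Suppose then that $X$ has the $\kappa'\text{-}\PSP$ for no cardinal $\kappa'<\kappa$ of uncountable cofinality. Since $\kappa$ is uncountable of countable cofinality it is a limit cardinal, so I can fix a strictly increasing sequence $\langle\kappa_n:n\in\omega\rangle$ of regular uncountable cardinals with $\kappa_n\to\kappa$. For each $n$, the failure of the $\kappa_n\text{-}\PSP$ supplies a $\bG$ subset $A_n$ of $X$ with $|A_n|\geq\kappa_n$ and no copy of $2^\omega$; applying the $\kPSP$ to $A_n$ then gives $\kappa_n\leq|A_n|<\kappa$. Writing $A=\bigcup_{n\in\omega}A_n$, a routine computation yields $|A|=\kappa<\cccc$.

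It remains to realize $A$, or a suitable subset of it, as a $\bG$ subset of $X$, and here the argument divides according to $\bG$. If $\bG$ is closed under countable unions---this covers $\mathbf{\Sigma}^0_\xi$, $\mathbf{\Sigma}^1_n$, and $\mathbf{\Pi}^1_n$---then $A$ itself is a $\bG$ subset of $X$ of size $\kappa$, and the $\kPSP$ forces it to contain a copy of $2^\omega$, contradicting $|A|=\kappa<\cccc$. The delicate case, and the one that matters for Propositions \ref{bbiggen} and \ref{soleckigen}, is $\bG=\mathbf{\Pi}^0_\xi$ (in particular $\mathsf{closed}$ and $\mathsf{G_\delta}$), where $A$ need not be $\bG$. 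Here I would run a fusion argument inside the compact space $T=[0,1]^\omega\supseteq X$ with a fixed metric: covering $T$ by countably many balls of radius $1/n$ and using the regularity of $\kappa_n$, select a piece $A_n'\subseteq A_n$ with $|A_n'|\geq\kappa_n$ and $\diam(A_n')<2/n$; by compactness pass to a subsequence along which chosen points $a_n\in A_n'$ converge to some $z\in T$, and, thinning further, arrange the $A_n'$ to lie in pairwise disjoint open sets shrinking to $z$. The set $C=\bigcup_n A_n'\cup(\{z\}\cap X)$ then lies in $\mathbf{\Pi}^0_\xi$---for $\mathsf{closed}$ this is immediate, and in general the disjointness lets one rewrite the union as a countable intersection of countable unions of lower-level sets, which is again $\mathbf{\Pi}^0_\xi$---has size $\geq\sup_n\kappa_n=\kappa$, and cannot contain a copy of $2^\omega$ because $|C|=\kappa<\cccc$, contradicting the $\kPSP$.

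I expect the genuine obstacle to be exactly this last point: keeping the fusion set within $\bG$ (the convergence-to-a-point and disjoint-neighbourhood bookkeeping is what makes $C$ genuinely a $\mathbf{\Pi}^0_\xi$ subset of $X$). As for the hypothesis that $2^\omega$ has the $\cccc\text{-}\PSP$, it is not strictly needed for the boldface pointclasses above, where the cardinality bound $|A|=\kappa<\cccc$ already rules out copies of $2^\omega$; but it supplies the robust reason why $A=\bigcup_n A_n$ contains no copy of $2^\omega$, valid for any $\bG$: a copy $K$ of $2^\omega$ inside $A$ would split as $K=\bigcup_n(K\cap A_n)$ with some $K\cap A_n$ of size $\cccc$ (again by $\mathsf{cf}(\cccc)>\omega$), and since $K\cap A_n$ is a $\bG$ subset of $K\approx 2^\omega$ it would then contain a copy of $2^\omega$ by the $\cccc\text{-}\PSP$, against the choice of $A_n$. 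Combined with the fact that $A$ is itself $\bG$ whenever $\bG$ is closed under countable unions, this gives the contradiction directly in that case.
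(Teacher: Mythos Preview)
Your proof is correct and follows essentially the same strategy as the paper's: assume the conclusion fails, pick $\bG$ witnesses $A_n$ of size $\geq\kappa_n$ with no Cantor subset, and show that their union (arranged to converge to a point in the $\mathbf{\Pi}^0_\xi$ case, adjoining $\{z\}\cap X$ when $\xi=1$) contradicts the $\kappa\text{-}\PSP$. The one noteworthy difference is your preliminary reduction to $\kappa<\cccc$ via $\mathsf{cf}(\cccc)>\omega$, which lets you derive the contradiction in the countable-union case from the bare cardinality bound $|A|=\kappa<\cccc$; the paper instead uses the $\cccc\text{-}\PSP$ hypothesis directly at that step (as in your final paragraph), finding a copy of $2^\omega$ inside some $K\cap A_n$---your route is slightly slicker there, but the two arguments are otherwise parallel.
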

\begin{proof}
Assume without loss of generality that $X$ is a subspace of $T=[0,1]^\omega$.
Fix a compatible metric on $T$. Fix cardinals of uncountable cofinality
$\kappa_n<\kappa$ for $n\in\omega$ such that $\kappa_n\to\kappa$. Assume, in
order to get a contradiction, that there exist $\bG$ subsets $A_n$ of $X$ for
$n\in\omega$ such that $|A_n|\geq\kappa_n$ for each $n$ and no $A_n$ contains a
copy of $2^\omega$. Define $A=\bigcup_{n\in\omega}A_n$. First assume that the
pointclass $\bG$ is closed under countable unions. Then $A$ is a $\bG$ subset of
$X$ of size at least $\kappa$, so it contains a copy $K$ of $2^\omega$ by the
$\kappa\text{-}\PSP$. Notice that $|K\cap A_n|=\cccc$ for some $n$
because $|K|=\cccc$ has uncountable cofinality. The fact that $K\approx
2^\omega$ has the $\cccc\text{-}\PSP$ concludes the proof in this
case.

Now assume $\bG=\mathbf{\Pi}^0_\xi$, where $\xi$ is a successor ordinal such
that $1\leq\xi <\omega_1$. Choose $A_n$ as above, but require in addition that $\diam(A_n)\to 0$ as $n\to\infty$. This is possible by Lemma
\ref{findcrowded}. Pick $z_n\in A_n$ for each $n$. By compactness, there exists
$z\in T$ and a subsequence of $\langle z_n:n\in\omega\rangle$ that converges to
$z$. It is easy to check that the corresponding subsequence of $\langle
A_n:n\in\omega\rangle$ converges to $z$. Assume without loss
of generality that $\langle A_n:n\in\omega\rangle$ converges to $z$. Define
$A=\bigcup_{n\in\omega}A_n$. In the case $\xi>1$, it is not hard to show that $A$ is a $\bG$ subset of $X$ of size at least $\kappa$, which yields a contradiction
as before. In the case $\xi=1$, consider $A\cup (\{z\}\cap X)$ instead.
\end{proof}

The following question is obviously inspired by Proposition \ref{bsmall}. By
inspecting its proof, it is not hard to realize that the answer would be ``yes''
assuming the existence of a subset of $2^\omega$ with the
$(\kappa,\omega_1)\text{-}\GP$ (see Question \ref{grinzingq}).
\begin{question}\label{bsmallgen}
Does $\bbbb=\kappa$ imply that there exists a space with the
$\kappa\text{-}\Pc$ but not the
$\kappa\text{-}\Pa$?
\end{question}
\noindent Notice that an affirmative answer to Question \ref{bsmallgen},
combined with Proposition \ref{bbiggen}, Proposition \ref{soleckigen} and Lemma \ref{countablecof}, would
allow us to generalize Theorem \ref{completepicture} from $\omega_1$ to an
arbitrary uncountable cardinal $\kappa$.

Finally, we observe that Corollary \ref{sierpinskicor} below shows that another concrete instance of
Question \ref{verygeneral} is settled by the following classical result (see \cite[Proposition 13.7]{kanamori}).
\begin{theorem}[Sierpi\'{n}ski]\label{sierpinski}
Every $\mathbf{\Sigma}^1_2$ subset of a Polish space $T$ can be written as
$\bigcup_{\alpha\in\omega_1} A_\alpha$, where each $A_\alpha$ is a Borel subset of $T$.
\end{theorem}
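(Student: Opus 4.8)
The plan is to combine the normal form for $\mathbf{\Sigma}^1_2$ sets with the uniformization and rank theory for $\mathbf{\Pi}^1_1$ sets. First I would fix a representation $A=\pi[P]$, where $P$ is a $\mathbf{\Pi}^1_1$ subset of $T\times\omega^\omega$ and $\pi:T\times\omega^\omega\longrightarrow T$ is the projection onto the first coordinate; every $\mathbf{\Sigma}^1_2$ set arises in this way, since $\mathbf{\Sigma}^1_2=\exists^{\omega^\omega}\mathbf{\Pi}^1_1$. The naive idea would be to decompose $P$ into an increasing union of $\aleph_1$ Borel sets and then project. The difficulty is that the projection of a Borel set is in general only analytic, not Borel, so this alone would merely exhibit $A$ as a union of $\aleph_1$ analytic sets.

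To repair this, I would first apply the Kond\^o--Addison uniformization theorem to replace $P$ by a $\mathbf{\Pi}^1_1$ set $P^\ast\subseteq P$ that uniformizes it, so that $\pi[P^\ast]=\pi[P]=A$ while $\pi\upharpoonright P^\ast$ is \emph{injective}. Next I would invoke the prewellordering property of $\mathbf{\Pi}^1_1$ to fix a $\mathbf{\Pi}^1_1$-norm $\varphi:P^\ast\longrightarrow\omega_1$. The fundamental fact about such norms is that each bounded initial segment $P^\ast_\alpha=\{w\in P^\ast:\varphi(w)<\alpha\}$ is $\mathbf{\Delta}^1_1$, hence Borel, and clearly $P^\ast=\bigcup_{\alpha<\omega_1}P^\ast_\alpha$ is an increasing union of $\aleph_1$ Borel sets. (If $\varphi$ is bounded below $\omega_1$ then $P^\ast$ is itself Borel and $A=\pi[P^\ast]$ is Borel by the Luzin--Suslin theorem invoked below, so there is nothing to prove.)

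Finally, I would exploit the injectivity secured by uniformization. For each $\alpha<\omega_1$ the restriction $\pi\upharpoonright P^\ast_\alpha$ is an injective continuous map on the Borel set $P^\ast_\alpha$, so by the Luzin--Suslin theorem its image $\pi[P^\ast_\alpha]$ is a Borel subset of $T$. Therefore
$$
A=\pi[P^\ast]=\bigcup_{\alpha<\omega_1}\pi[P^\ast_\alpha]
$$
exhibits $A$ as a union of $\aleph_1$ Borel sets, as desired. I expect the main obstacle to be precisely the passage from projections to Borel images: everything hinges on first arranging that the projection be injective, via $\mathbf{\Pi}^1_1$-uniformization, so that the Luzin--Suslin theorem can upgrade ``analytic image'' to ``Borel image''. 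The norm theory then plays only the auxiliary role of slicing $P^\ast$ into $\aleph_1$ Borel pieces, and no additional set-theoretic hypotheses are needed.
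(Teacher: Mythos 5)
Your argument is correct, but note that the paper does not actually prove this theorem: it is quoted as a classical result with a citation to Kanamori, where the standard proof proceeds quite differently. The classical route represents a $\mathbf{\Sigma}^1_2$ set as $\omega_1$-Suslin via the Shoenfield tree (or, in Sierpi\'nski's original form, decomposes the coanalytic set $P$ into its $\aleph_1$ Borel constituents, projects to obtain $\aleph_1$ analytic pieces, and then decomposes each analytic piece into $\aleph_1$ Borel sets in turn); it needs only the Luzin--Sierpi\'nski constituent analysis. Your route instead invests in the Kond\^o--Addison uniformization theorem up front, which buys injectivity of the projection and hence lets the Luzin--Suslin theorem convert each Borel slice $P^\ast_\alpha$ directly into a Borel subset of $T$, so the decomposition is achieved in a single pass and the auxiliary fact ``every analytic set is a union of $\aleph_1$ Borel sets'' is never needed. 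All the steps check out: $\mathbf{\Sigma}^1_2=\exists^{\omega^\omega}\mathbf{\Pi}^1_1$; Kond\^o applies to arbitrary Polish products and yields a $\mathbf{\Pi}^1_1$ set $P^\ast$ with $\pi[P^\ast]=A$ and $\pi\upharpoonright P^\ast$ injective; the initial segments of a $\mathbf{\Pi}^1_1$-norm are Borel (for $\alpha$ not attained by $\varphi$ the set $\{w:\varphi(w)<\alpha\}$ is a countable union of $\mathbf{\Delta}^1_1$ sets rather than literally $\mathbf{\Delta}^1_1$, but that is a harmless gloss); and images commute with unions. The only trade-off is that you use a substantially heavier tool (uniformization) than the classical argument requires, in exchange for a shorter and arguably more transparent proof.
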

\begin{corollary}\label{sierpinskicor}
Assume that $X$ is a space with the $\omega_2\text{-}\Pg$. Then $X$ has the $\omega_2\text{-}\mathsf{PSP}(\mathbf{\Sigma}^1_2)$.
\end{corollary}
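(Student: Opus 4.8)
The plan is to combine Sierpi\'{n}ski's decomposition (Theorem \ref{sierpinski}) with the $G_\delta$-to-analytic transfer furnished by Proposition \ref{soleckigen}. First I would reduce to the case where $X$ is a subspace of the Polish space $T=[0,1]^\omega$, exactly as in the proofs of Corollary \ref{aneqgd} and Proposition \ref{soleckigen}. Fix a $\mathbf{\Sigma}^1_2$ subset $A$ of $X$ with $|A|\geq\omega_2$; the goal is to exhibit a copy of $2^\omega$ inside $A$. By the definition of a $\bG$ subset of a space, there is a $\mathbf{\Sigma}^1_2$ subset $B$ of $T$ with $A=B\cap X$.

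Next I would apply Theorem \ref{sierpinski} to write $B=\bigcup_{\alpha\in\omega_1}B_\alpha$, where each $B_\alpha$ is a Borel subset of $T$, so that $A=\bigcup_{\alpha\in\omega_1}(B_\alpha\cap X)$. The key combinatorial step is a cofinality argument: since $\mathrm{cf}(\omega_2)=\omega_2>\omega_1$, a union of $\omega_1$ many sets each of size less than $\omega_2$ has size less than $\omega_2$. As $|A|\geq\omega_2$, it follows that $|B_\alpha\cap X|\geq\omega_2$ for some $\alpha\in\omega_1$.

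Finally, I would observe that $B_\alpha\cap X$ is an analytic subset of $X$, because $B_\alpha$ is Borel, hence analytic, in the Polish space $T$. By Proposition \ref{soleckigen}, the hypothesis that $X$ has the $\omega_2\text{-}\Pg$ yields that $X$ has the $\omega_2\text{-}\Pa$. Since $|B_\alpha\cap X|\geq\omega_2$, the $\omega_2\text{-}\Pa$ forces $B_\alpha\cap X$ to contain a copy of $2^\omega$, and this copy lies inside $A$, which is what we needed to show.

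The only points requiring care are entirely routine: checking that $B_\alpha\cap X$ is an analytic subset of $X$ (immediate from the definition, since $B_\alpha$ is analytic in $T$) and the cofinality computation for $\omega_2$. I do not expect any genuine obstacle here, as the whole substance of the argument is carried by Sierpi\'{n}ski's theorem together with Proposition \ref{soleckigen}; the corollary is really just a packaging of these two facts via the cardinality bookkeeping.
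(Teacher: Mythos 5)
Your proposal is correct and follows essentially the same route as the paper: decompose the $\mathbf{\Sigma}^1_2$ set into $\omega_1$ many Borel pieces via Theorem \ref{sierpinski}, use the cofinality of $\omega_2$ to find a piece of size at least $\omega_2$, and apply Proposition \ref{soleckigen} to get the $\omega_2\text{-}\Pa$. The only difference is that you spell out the relativization to $T=[0,1]^\omega$ and the cofinality computation, which the paper leaves implicit.
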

\begin{proof}
Let $A$ be a $\mathbf{\Sigma}^1_2$ subset of $X$ such that $|A|\geq\omega_2$. By Theorem \ref{sierpinski}, it is possible to write $A=\bigcup_{\alpha\in\omega_1} A_\alpha$, where each $A_\alpha$ is a Borel subset of $X$. Fix $\alpha$ such that $|A_\alpha|\geq\omega_2$. Notice that $X$ has the $\omega_2\text{-}\Pa$ by Proposition \ref{soleckigen}. In particular, $A_\alpha$ contains a copy of $2^\omega$.
\end{proof}

\section{Acknowledgements}

The author is very grateful to Arnie Miller for realizing that an early ``proof'' of Corollary \ref{existsgrinzing} used an assumption that is not provable in $\ZFC$, and for kindly ``donating'' to him its current proof and Proposition \ref{wo}. He also thanks Ken Kunen for contributing to the proof of Proposition \ref{bsmall}, as explained in the remark that follows it. Finally, he thanks Lyubomyr Zdomskyy for several discussions on the topic of this article, and in particular for suggesting Lemma \ref{closureincreasing} and the remarks on Luzin sets (namely, the second proof of Proposition \ref{gpch} and Proposition \ref{luzin}).

\end{document}